\documentclass[12pt]{amsart}

\usepackage{amsmath}
\usepackage{amsmath,amsfonts,amssymb,amsthm}
\usepackage{graphicx,color}
%\usepackage[hypertex]{hyperref}
%% Hack for version of TeX on my CD.
\DeclareMathRadical{\sqrtsign}{symbols}{"70}{largesymbols}{"70}
%

%       This macros file is mean to be used
%       with "newlatex" or "amslatex".
%       It has the advantage of standardized
%       blackboard and gothic fonts.
%       
%       To invoke, use:
%       \documentstyle[amstex]{article}.
%
%       For backward compatibility, use:
%       [amstex,oldlfont]
%
%       Fonts
%       German (gothic) and Blackboard bold.
%       coupled with amslatex.
%
%\newcommand{\gothic}{\frak}
%\newcommand{\bb}{\Bbb}

%       Cheap fix for IHES
%\newcommand{\bb}{\bf}
%\newcommand{\gothic}{\bf}

%       Blackboard symbols
%
\providecommand{\abs}[1]{\lvert#1\rvert}

%       Figures in Boxes.
%       Usage:  \makefig{caption}{label}{contents}
%
%               \fig{caption}{label}{height}
%               -creates empty figure for pasteup.
%
\newlength{\figboxwidth}             
\setlength{\figboxwidth}{5.8in}

%       Appendix mode.  
%       \appendixmode{A} causes subsections to be labelled A.1,
%       A.2, etc.
%

%         Lengths
\setlength{\textwidth}{6.0in}
\setlength{\oddsidemargin}{0.25in}
\setlength{\evensidemargin}{0.25in}

%       Labelled negatively indented paragraphs.
%       Usage: \begin{negpar}
%              \item
%       Paragraph whose first line
%               only, is negatively indented.
%               \end{negpar}

%       Emphasized paragraph label.
%
%\newcommand{\bold}[1]{\medskip \noindent {\bf #1 }\nopagebreak}
%       Must be renewed to use with amslatex

%       Qed:  filled box flushed right.
%

%\newcommand{\qed}{\hfill $\Box$}
%\newcommand{\qed}{\hfill \rule{2mm}{2.5mm}}
%\newcommand{\qed}{\hfill $\square$}

%\newcommand{\qed}{\nopagebreak \par \smallskip 
%\hspace*{\fill} \rule{2mm}{2.5mm} \par \medskip }

%      Math Synonyms

\newcommand{\grad}{\nabla}
\newcommand{\infinity}{\infty}

%\newcommand{\implies}{\Longrightarrow}
%       Must be renewed for use with amslatex

%      A.Eskin 7/10/92

%      Math Spacing Improvements

         %Such that

%      Math Overlines

%       Math Accents

%       Math Symbols
%       Remark:  

%       Math Words

%\newcommand{\mod}{\operatorname{mod}}
% Renew Required For amslatex

%       Theorems, etc.
\def\@ifundefined#1#2#3%
  {\expandafter\ifx\csname#1\endcsname\relax#2\else#3\fi}

\@ifundefined{theoremstyle}{
}{
\theoremstyle{plain} %default
}
\newtheorem{theorem}{Theorem}[section]

\newtheorem{proposition}[theorem]{Proposition}
\newtheorem{lemma}[theorem]{Lemma}

\newtheorem{corollary}[theorem]{Corollary}

\@ifundefined{theoremstyle}{
}{
\theoremstyle{definition} %default
}
\newtheorem{definition}[theorem]{Definition}

\newtheorem{remark}[theorem]{Remark}

%   \cX  = draw X caligraphically.

%

\mathchardef\GG="321D
% Gothic letters
%

%\newcommand{\ge}{{\gothic e}}

%   bold letters

%

%\newcommand{\mc}[1]{{}}  

%\newcommand{\mcc}[1]{{\sc #1}} 
\newcommand{\mcc}[1]{{}}

\numberwithin{equation}{section}

% ###############################################################
% ###############################################################
% ###############################################################

\title[Discontinuities in generic dynamical Lagrange spectrum]
{On the discontinuities of Hausdorff dimension in generic dynamical Lagrange spectrum}

\author{Christian Camilo Silva Villamil}

\address{Christian Camilo Silva Villamil: 
IMPA, Instituto de Matem\'atica Pura e Aplicada, Brazil; SUSTech International Center for Mathematics, P. R. China.
}
\email{ccsilvav@impa.br}

\keywords{Hausdorff dimension, horseshoes, Lagrange spectrum, surface diffeomorphisms}

\begin{document}

\begin{abstract}
Let $\varphi_0$ be a $C^2$-conservative diffeomorphism of a compact surface $S$ and let $\Lambda_0$ be a mixing horseshoe of $\varphi_0$. Given a smooth real function $f$ defined in $S$ and some diffeomorphism $\varphi$, close to $\varphi_0$, let $\mathcal{L}_{\varphi, f}$ be the Lagrange spectrum associated to the hyperbolic continuation $\Lambda(\varphi)$ of the horseshoe $\Lambda_0$ and $f$. We show that, for generic choices of $\varphi$ and $f$, if $L_{\varphi, f}$ is the map that gives the Hausdorff dimension of the set $\mathcal{L}_{\varphi, f}\cap (-\infty, t)$ for $t\in \mathbb{R}$, then there is an interval $I_{\varphi,f}$ which contains all the discontinuities of $L_{\varphi, f}$ with the end points of this interval the only possible limits of an infinite sequence of discontinuities.
 
\end{abstract}

\maketitle
\tableofcontents
\section{Introduction}\label{s.introduction}
\subsection{Classical spectra}\label{ss.classical-Markov-Lagrange} 
The classical Lagrange and Markov spectra are closed subsets of the real line related to Diophantine approximations. They arise naturally in the study of rational approximations of irrational numbers and of indefinite binary quadratic forms, respectively. More precisely, given an irrational number $\alpha$, let 
$$\ell(\alpha):=\limsup_{\substack{p, q\to\infty \\ p, q\in\mathbb{N}}}\frac{1}{|q(q\alpha-p)|}$$
be its best constant of Diophantine approximation. The set 
$$\mathcal{L}:=\{\ell(\alpha):\alpha\in\mathbb{R}-\mathbb{Q}\ \text{and}\ \ell(\alpha)<\infty\}$$ 
consisting of all finite best constants of Diophantine approximations is the so-called \emph{Lagrange spectrum}. 

Similarly, given a real quadratic form $q(x,y)=ax^2+bxy+cy^2$, let $\Delta(q)=b^2-4ac$ be its discriminant. We define the \emph{Markov spectrum} as follows
$$\mathcal{M}:=\left\{\frac{\sqrt{\Delta(q)}}{\inf\limits_{(x,y)\in\mathbb{Z}^2-\{(0,0)\}} |q(x,y)|} < \infty: q\ \text{is indefinite and}\ \Delta(q)>0\right\}.$$
 
The reader can find more information about the structure of these sets in the classical book \cite{CF} of Cusick and Flahive, but let us mention here that:
\begin{itemize}
\item Markov showed that $\mathcal{L}\cap(-\infty, 3)=\mathcal{M}\cap(-\infty, 3)=\{\sqrt{9-4/z_n^2}:n\in \mathbb{N}\}$ where $z_n$ are the \emph{Markov numbers}, that is, the largest coordinate of a triple $(x_n,y_n,z_n)\in \mathbb{N}^3$ verifying the Markov equation 
$$x_n^2+y_n^2+z_n^2=3x_ny_nz_n.$$
\item Hall showed that $\mathcal{L}$ (and then $\mathcal{M}$) contains a half-line and Freiman determined the biggest half-line contained in the spectra, namely $[c,+\infty)$ where 
$$c=\frac{2221564096+283748\sqrt{462}}{491993569}\simeq 4.52782956\dots$$ 
\item Moreira proved in \cite{M3} several results on the geometry of the Markov and Lagrange spectra, for example, that the map $d:\mathbb{R} \rightarrow [0,1]$, given by
$$d(t)=HD(\mathcal{L}\cap(-\infty,t))= HD(\mathcal{M}\cap(-\infty,t)),$$
(where $HD(X)$ denotes the Hausdorff dimension of the set $X$) is continuous, surjective and such that $\max\{t\in\mathbb{R}:d(t)=0\}=3.$
\end{itemize}

For our purposes, it is worth to point out here that the Lagrange and Markov spectra have the following \emph{dynamical} interpretation in terms of the continued fraction algorithm: Denote by $[a_0,a_1,\dots]$ the continued fraction $a_0+\frac{1}{a_1+\frac{1}{\ddots}}$. Let $\Sigma=\mathbb{N}^{\mathbb{Z}}$ the space of bi-infinite sequences of positive integers, $\sigma:\Sigma\to\Sigma$ be the left-shift map $\sigma((a_n)_{n\in\mathbb{Z}}) = (a_{n+1})_{n\in\mathbb{Z}}$, and let $f:\Sigma\to\mathbb{R}$ be the function
$$f((a_n)_{n\in\mathbb{Z}}) = [a_0, a_1,\dots] + [0, a_{-1}, a_{-2},\dots].$$
Then, 
$$\mathcal{L}=\left\{\limsup_{n\to\infty}f(\sigma^n(\underline{\theta}))<\infty:\underline{\theta}\in\Sigma\right\} \quad \textrm{and} \quad \mathcal{M}= \left\{\sup_{n\to\infty}f(\sigma^n(\underline{\theta}))<\infty:\underline{\theta}\in\Sigma\right\}.$$

In the sequel, we consider the natural generalization of this dynamical version of the classical Lagrange and Markov spectra in the context of horseshoes of smooth diffeomorphisms of compact surfaces, where a horseshoe is a non-empty compact invariant hyperbolic set of saddle type which is transitive, locally maximal, and is not reduced to a periodic orbit.
\subsection{Dynamical spectra}

Let $\varphi:S\rightarrow S$ be a diffeomorphism of a $C^{\infty}$ compact surface $S$ with a mixing horseshoe $\Lambda$ and let $f:S\rightarrow \mathbb{R}$ be a differentiable function. Following the above characterization of the classical spectra, we define the maps $\ell_{\varphi,f}: \Lambda \rightarrow \mathbb{R}$ and $m_{\varphi,f}: \Lambda \rightarrow \mathbb{R}$ given by $\ell_{\varphi,f}(x)=\limsup\limits_{n\to \infty}f(\varphi^n(x))$ and $m_{\varphi,f}(x)=\sup\limits_{n\in\mathbb{Z}}f(\varphi^n(x))$ for $x\in \Lambda$ and call $\ell_{\varphi,f}(x)$ the \textit{Lagrange value} of $x$ associated to $f$ and $\varphi$ and also $m_{\varphi,f}(x)$ the \textit{Markov value} of $x$ associated to $f$ and $\varphi$. The sets
$$\mathcal{L}_{\varphi,f}=\ell_{\varphi,f}(\Lambda)=\{\ell_{\varphi,f}(x):x\in \Lambda\}$$
and
$$\mathcal{M}_{\varphi,f}=m_{\varphi,f}(\Lambda)=\{m_{\varphi,f}(x):x\in \Lambda\}$$
are called \textit{Lagrange Spectrum} of $(\varphi,f)$ and \textit{Markov Spectrum} of $(\varphi,f)$. Here, we omitted the reference to the horseshoe $\Lambda$ because in our context it will always be determined by the diffeomorphism $\varphi.$

In this paper, we are interested in the study of the real function 
\begin{equation}\label{f1}
L_{\varphi,f}(t)=HD(\mathcal{L}_{\varphi,f}\cap (-\infty,t)) .
\end{equation}
The description of this function is closely related to the study of the behavior of the family of sets $\{\Lambda_t\}_{t\in \mathbb{R}}$, where for $t\in \mathbb{R}$
$$\Lambda_t=m_{\varphi, f}^{-1}((\infinity,t]) =\bigcap\limits_{n\in\mathbb{Z}}\varphi^{-n}(f|_{\Lambda}^{-1}((\infinity,t])) = \{x\in\Lambda: \forall n\in \mathbb{Z}, \ f(\varphi^n(x))\leq t\}.$$
In order to do that, we will explore the combinatorial nature of $\varphi|_{\Lambda}$ and its connection with the unstable and stable Cantor sets associated to $\Lambda$. More specifically, fix a Markov partition $\{R_a\}_{a\in \mathcal{A}}$ of $\Lambda$ with sufficiently small diameter consisting of rectangles $R_a \sim I_a^u \times I_a^s$ delimited by compact pieces $I_a^s$, $I_a^u$, of stable and unstable manifolds of certain points of $\Lambda$, see \cite[Theorem 2, page 172]{PT93}. The set $\mathcal{B}\subset \mathcal{A}^{2}$ of admissible transitions consists of pairs $(a,b)$ such that $\varphi(R_a)\cap R_{b}\neq \emptyset$; so, we can define the transition matrix $B$ by
$$b_{ab}=1 \ \ \text{if} \ \  \varphi(R_a)\cap R_b\neq \emptyset \ \ \text{and}  \ b_{ab}=0  \ \text{otherwise, for $(a,b)\in \mathcal{A}^{2}$.}$$

Let $\Sigma_{\mathcal{A}}=\left\{\underline{a}=(a_{n})_{n\in \mathbb{Z}}:a_{n}\in \mathcal{A} \ \text{for all} \ n\in \mathbb{Z}\right\}$ and consider the homeomorphism of $\Sigma_{\mathcal{A}}$, the shift, $\sigma:\Sigma_{\mathcal{A}}\to\Sigma_{\mathcal{A}}$ defined by $\sigma(\underline{a})_{n}=a_{n+1}$. Let $\Sigma_{\mathcal{B}}=\left\{\underline{a}\in \Sigma_{\mathcal{A}}:b_{a_{n}a_{n+1}}=1\right\}$, this set is closed and $\sigma$-invariant subspace of $\Sigma_{\mathcal{A}}$. Still denote by $\sigma$ the restriction of $\sigma$ to $\Sigma_{\mathcal{B}}$, the pair $(\Sigma_{\mathcal{B}},\sigma)$ is a subshift of finite type, see \cite[Chapter 10]{{Shub}}. The dynamics of $\varphi$ on $\Lambda$ is topologically conjugate to the sub-shift $\Sigma_{\mathcal{B}}$, namely, there is a homeomorphism $\Pi: \Lambda \to \Sigma_{\mathcal{B}}$ such that $\varphi\circ \Pi=\Pi\circ \sigma$.

As we will generally deal with sequences, we transfer the function $f$ from $\Lambda$ to a function (still denoted $f$) on $\Sigma_{\mathcal{B}}$. In this way, we set 
$$\Sigma_t=\Pi(\Lambda_t)=\{\theta\in\Sigma_{\mathcal{B}}: \sup\limits_{n\in\mathbb{Z}} f(\sigma^n(\theta))\leq t\}.$$

Recall that the stable and unstable manifolds of $\Lambda$ can be extended to locally invariant $C^{1+\alpha}$ foliations in a neighborhood of $\Lambda$ for some $\alpha>0$. Using these foliations it is possible to define projections $\pi^u_a: R_a\rightarrow I^s_a\times \{i^u_a\}$ and $\pi^s_a: R_a\rightarrow \{i^s_a\} \times I^u_a$ of the rectangles into the connected components $I^s_a\times \{i^u_a\}$ and $\{i^s_a\} \times I^u_a$ of the stable and unstable boundaries of $R_a$, where $i^u_a\in \partial I^u_a$ and $i^s_a\in \partial I^s_a$ are arbitrarily fixed. In this way, we have the unstable and stable Cantor sets
$$K^u:=\bigcup_{a\in \mathcal{A}}\pi^s_a(\Lambda\cap R_a) \ \mbox{and} \ K^s:=\bigcup_{a\in \mathcal{A}}\pi^u_a(\Lambda\cap R_a).$$

In fact $K^u$ and $K^s$ are $C^{1+\alpha}$ dynamically defined, associated to some expanding maps $\psi_s$ and $\psi_u$ defined in the following way: If $y\in R_{a_1}\cap \varphi(R_{a_0})$ we put
$$\psi_s(\pi^u_{a_1}(y))=\pi^u_{a_0}(\varphi^{-1}(y))$$
and if $z\in R_{a_0}\cap \varphi^{-1}(R_{a_1})$ we put
$$\psi_u(\pi^s_{a_0}(z))=\pi^s_{a_1}(\varphi(z)).$$

Moreira's theorem of \cite{M3} was generalized first in \cite{CMM16} in the context of {\it conservative} diffeomorphism with some horseshoe with Hausdorff dimension smaller than $1$, and later the condition on the dimension of the horseshoe was removed in \cite{GCD}. More specifically, the authors proved that for typical choices of the conservative diffeomorphism and of the real function, the intersections of the corresponding dynamical Markov and Lagrange spectra with half-lines $(-\infty,t)$ have the same Hausdorff dimension, and this defines a continuous function of $t$ whose image is $[0,\min \{1,D\}]$, where $D$ is the Hausdorff dimension of the horseshoe.

Our main theorem (cf. Theorem \ref{principal0} below) is quite related to the result of the previous paragraph, but in our case, we consider typical diffeomorphism that are not necessarily conservative. Here, we will identify ``the canonical interval" where $L_{\varphi,f}$ can have a discontinuity, and we will show that far away from the endpoints of this interval, one has finitely many points of discontinuity.

\subsection{Statement of the main theorem}  Let $\varphi_0$ be a smooth conservative diffeomorphism of a surface $S$ possessing a mixing horseshoe $\Lambda_0$. Denote by $\mathcal{U}$ a  $C^{2}$ neighborhood of $\varphi_0$ in the space $\textrm{Diff}^{2}(S)$ of smooth diffeomorphisms of $S$ such that $\Lambda_0$ admits a continuation $\Lambda$ for every $\varphi\in\mathcal{U}$. Using the notations of the previous subsection, our objective is to study the discontinuities of the map $L_{\varphi,f}$ defined by 
$$t\mapsto L_{\varphi,f}(t)=HD(\mathcal{L}_{\varphi, f}\cap (-\infty, t)).$$
In order to do this, we consider the interval $I_{\varphi,f}=[c_{\varphi,f},\tilde{c}_{\varphi,f}]$, where
$$c_{\varphi,f}:=\sup\{t\in\mathbb{R}:L_{\varphi,f}(t)=\min L_{\varphi,f}=0\}$$
and 
$$\tilde{c}_{\varphi,f}:=\inf\{t\in\mathbb{R}:L_{\varphi,f}(t)=\max L_{\varphi,f}=HD(\mathcal{L}_{\varphi,f})\}$$
which is the interval where $L_{\varphi,f}$ can have discontinuities. With this notation, our main result is the following

\begin{theorem}\label{principal0} If $\mathcal{U}\subset\textrm{Diff}^{2}(S)$ is sufficiently small, then there exists a residual subset $\mathcal{U}^{*}\subset \mathcal{U}$ with the property that for every $\varphi\in\mathcal{U}^{*}$ and any $r\geq2$, there exists a $C^r$-residual set $\mathcal{P}_{\varphi,\Lambda}\subset C^r(S,\mathbb{R})$ such that given $f\in \mathcal{P}_{\varphi,\Lambda}$ one has
$$\max L_{\varphi, f}=HD(\mathcal{L}_{\varphi,f})=\min \{1,HD(\Lambda)\}$$ 
and 
%$$\tilde{c}_{\varphi,f}=\inf\{t\in\mathbb{R}:L(t)=\min \{1,HD(\Lambda)\}\}$$
$$c_{\varphi,f}=\min \mathcal{L}^{'} _{\varphi, f}=\min \{x: x\ \textrm{is an accumulation point of}\ \mathcal{L}_{\varphi, f}\}.$$
Even more,
\begin{itemize}
    \item If $HD(\Lambda)<1$ then $L_{\varphi, f}$ has finitely many discontinuities in any closed subinterval $I\subset I_{\varphi,f}$ that does not contain $c_{\varphi,f}.$
    \item If $HD(\Lambda)\geq 1$ then $L_{\varphi, f}$ has finitely many discontinuities in any closed subinterval $I\subset I_{\varphi,f}$ that does not contain neither $c_{\varphi,f}$ nor $\tilde{c}_{\varphi,f}.$
\end{itemize}
%\begin{itemize}
    %\item If $HD(\Lambda)<1$ then given $\epsilon>0$, $L$ has finitely many discontinuities in the interval $[c_{\varphi,f}+\epsilon, \infinity).$
    %\item If $HD(\Lambda)\geq 1$ then given any small $\epsilon>0$, $L$ has finitely many discontinuities in the interval $[c_{\varphi,f}+\epsilon, \tilde{c}_{\varphi,f}-\epsilon].$
%\end{itemize}

\end{theorem}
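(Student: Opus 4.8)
The plan is to transfer everything to the symbolic model $(\Sigma_{\mathcal B},\sigma)$ and to the pair of dynamically defined Cantor sets $K^s,K^u$, and then to study the family $\Sigma_t$ (equivalently $\Lambda_t$) and the associated subhorseshoes as $t$ varies. The key idea is that, for each $t$, the set $\Lambda_t=m_{\varphi,f}^{-1}((-\infty,t])$ is itself a (generally not locally maximal) hyperbolic set, and one has the dimension formula $HD(\mathcal{L}_{\varphi,f}\cap(-\infty,t)) = HD(\mathcal{M}_{\varphi,f}\cap(-\infty,t))$ up to the endpoints $c_{\varphi,f},\tilde c_{\varphi,f}$, together with $HD(\Lambda_t)=HD(K^s_t)+HD(K^u_t)$ where $K^{s}_t,K^u_t$ are the corresponding pieces of the stable/unstable Cantor sets (this uses the regularity of the foliations and the product structure recalled in the excerpt). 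First I would establish, for $f$ in a suitable residual set, that $t\mapsto HD(K^u_t)$ is continuous from the left and that its discontinuities from the right come only from ``explosions'' of the horseshoe $\Lambda_t$: a discontinuity at $t_0$ forces a new admissible word (a new cycle of the underlying subshift) to appear, and the jump size is controlled by the dimension of the subhorseshoe created. The residual conditions on $\varphi$ and $f$ are exactly those guaranteeing: (i) $f$ achieves its maximum on $\Lambda_t$ at a single periodic orbit with nondegenerate behaviour (Morse-type genericity for $f$ along $\Lambda$, as in \cite{CMM16,GCD}); (ii) the relevant self-replicating / recurrent compact sets of regular Cantor sets are in ``position'' so that the scale recurrence lemma of Moreira applies; (iii) the endpoint values of the dimension are $\min\{1,HD(\Lambda)\}$, which gives the two displayed equalities for $\max L_{\varphi,f}$ and $c_{\varphi,f}$.

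The heart of the argument is the finiteness claim. I would argue by contradiction: suppose $L_{\varphi,f}$ has infinitely many discontinuities in a closed interval $I\subset I_{\varphi,f}$ bounded away from $c_{\varphi,f}$ (and from $\tilde c_{\varphi,f}$ in the case $HD(\Lambda)\ge 1$). Pass to an accumulation point $t_*\in I$ of discontinuities. At each discontinuity $t_n\to t_*$ the value $HD(\Lambda_{t_n})$ jumps, and each jump is witnessed by a subhorseshoe $\Lambda(t_n)\subset\Lambda_{t_n}$ carrying definite extra Hausdorff dimension; summing the jumps would force $HD(\Lambda_{t_*})$ to exceed $\min\{1,HD(\Lambda)\}$ unless the jumps shrink to zero. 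So the jumps go to $0$, and the geometric picture is that arbitrarily small windows $(t_n-\varepsilon,t_n]$ each create nontrivial new combinatorics near the maximizing periodic orbit of $f$ on $\Lambda_{t_*}$. Here is where the genericity of $f$ enters decisively: condition (i) says the top of $f|_{\Lambda_{t_*}}$ is attained at a \emph{single} periodic orbit $\mathcal O$, and one analyzes the local (semi-)continuity of the Cantor-set dimension near $\mathcal O$ by the ``renormalization''/Markov-string argument used by Moreira–Cerqueira–Matheus: the newly admissible words at scale $t_n$ are prefixes/suffixes of the itinerary of $\mathcal O$, and away from the exceptional points $c_{\varphi,f},\tilde c_{\varphi,f}$ only finitely many such words can contribute a positive jump because the ``box dimension = Hausdorff dimension'' continuity of $K^u_t$ (Moreira's result that these Cantor sets' dimension varies continuously in $t$, plus the fact proved in \cite{GCD} that $HD=$ box dimension for the spectra) rules out an infinite cascade. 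That is, infinitely many discontinuities accumulating at $t_*\ne c_{\varphi,f},\tilde c_{\varphi,f}$ would contradict the local continuity of $t\mapsto HD(K^u_t)+HD(K^s_t)$ that holds whenever the maximizing set is a single periodic orbit; but that single-orbit property is exactly what fails \emph{only} at $c_{\varphi,f}$ and $\tilde c_{\varphi,f}$, which is why up to two bad limit points survive.

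Concretely, the steps in order are: (1) set up the symbolic coding and the Cantor sets $K^s,K^u$, and record $HD(\Lambda_t)=HD(K^u_t)+HD(K^s_t)$ and $L_{\varphi,f}(t)=HD(\Lambda_t)$ away from the endpoints; (2) prove the endpoint identities $\max L_{\varphi,f}=\min\{1,HD(\Lambda)\}$ and $c_{\varphi,f}=\min\mathcal L'_{\varphi,f}$ by combining Hall-ray type arguments (density of $\mathcal L_{\varphi,f}$ near the top, from \cite{GCD}) with the fact that isolated points of $\mathcal L_{\varphi,f}$ contribute no dimension; (3) define the residual sets $\mathcal U^{*}$ and $\mathcal P_{\varphi,\Lambda}$ encoding the genericity of $f$ along all $\Lambda_t$ (finitely many conditions for each rational level, then take a countable intersection) and the scale-recurrence conditions on $\varphi$; (4) show each discontinuity of $L_{\varphi,f}$ at $t_0$ is accompanied by a subhorseshoe of $\Lambda_{t_0}$ of positive dimension, with jump bounded below in terms of that dimension; (5) run the contradiction argument: an infinite sequence of discontinuities accumulating at $t_*\notin\{c_{\varphi,f},\tilde c_{\varphi,f}\}$ forces, via the single-periodic-orbit structure at $t_*$ and Moreira's continuity of Cantor-set dimension, the jumps to be eventually zero — a contradiction. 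The main obstacle I anticipate is step (4)–(5): precisely quantifying how a discontinuity produces a definite amount of new Hausdorff dimension, and ruling out an infinite geometric cascade of ever-smaller jumps, which requires the delicate interplay between the scale recurrence lemma, the identification of $HD$ with box dimension for these sets (from \cite{GCD}), and the Morse-type genericity of $f$ that confines the new combinatorics to a neighborhood of one periodic orbit. The two exceptional limit points are then structurally forced, since $c_{\varphi,f}$ is where the maximizing dynamics first becomes nontrivial and $\tilde c_{\varphi,f}$ (when $HD(\Lambda)\ge1$) is where the dimension saturates at $1$, and at both places the single-orbit hypothesis degenerates.
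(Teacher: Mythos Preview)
Your proposal has a genuine gap at the heart of step (5), and it stems from a conflation in step (1). You assert that $L_{\varphi,f}(t)=HD(\Lambda_t)$ ``away from the endpoints'' and then plan to derive a contradiction from the continuity of $t\mapsto HD(K^u_t)+HD(K^s_t)$. But in the setting of this theorem (a $C^2$-neighborhood of a conservative diffeomorphism, \emph{not} the conservative case itself) one only has the inequality $L_{\varphi,f}(t)\le HD(\Lambda_t)$ (see \eqref{L}); equality is exactly what is lost when one drops the conservative hypothesis, and the possible failure of equality is precisely the source of the discontinuities the theorem is about. Indeed, Proposition~\ref{R-generic} already gives that $t\mapsto D_u(\Lambda_t)$ and $t\mapsto D_s(\Lambda_t)$ are continuous for $f\in\mathcal R_{\varphi,\Lambda}$, so $t\mapsto HD(\Lambda_t)$ is continuous --- yet $L_{\varphi,f}$ may still jump. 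Consequently your contradiction (``infinitely many discontinuities of $L_{\varphi,f}$ accumulating at $t_*$ would violate continuity of $t\mapsto HD(K^u_t)+HD(K^s_t)$'') simply does not fire: the latter map \emph{is} continuous while the former may jump. Your fallback observation that the jump sizes must tend to $0$ (since $L_{\varphi,f}$ is monotone and bounded) is correct but is not a contradiction either; a monotone bounded function can have infinitely many jumps. The appeal to a ``single periodic orbit maximizing $f|_{\Lambda_{t_*}}$'' also does not rescue the argument: the genericity condition in $\mathcal P_{\varphi,\Lambda}$ concerns subhorseshoes, and $\Lambda_{t_*}$ is typically not one.

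What is missing is the mechanism the paper actually uses: the notion of \emph{connection of subhorseshoes} (Definition~\ref{conection of horseshoes1}). At each discontinuity $t_n$ one produces a pair of subhorseshoes $\Lambda^s(t_n),\Lambda^u(t_n)\subset\Lambda_{t_n-\eta_n}$ that do \emph{not} connect before $t_n$ but \emph{do} connect before $t_n+\epsilon_n$ (Proposition~\ref{conection}); the non-connection is what encodes the jump of $L_{\varphi,f}$, since if they connected one could assemble a single subhorseshoe of large enough dimension sitting in some $\Lambda_q$ with $q<t_n$, contradicting the jump via Proposition~\ref{lagrange1}. A combinatorial chain argument (Proposition~\ref{cadeias}) then extracts, from infinitely many discontinuities, arbitrarily large finite families of subhorseshoes that are pairwise non-connecting before the relevant $t_n$'s. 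The contradiction comes from a pigeonhole step: to each such subhorseshoe one associates, via a ``good positions'' analysis at a fixed scale $r_0$ (Lemmas~\ref{Btam}--\ref{good}, Proposition~\ref{control}), a finite word $O(n)$ drawn from a \emph{finite} set independent of $n$; and whenever $O(m)=O(n)$ the two subhorseshoes can be connected before $\max\{t_m,t_n\}$ (Corollary~\ref{contradiction}). This finite-range map $O$, together with the near-conservativity estimate \eqref{c_1c_2} (needed to compare $D_u$ and $D_s$ uniformly over all subhorseshoes), is the engine of the proof and has no counterpart in your outline.
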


As a consequence, we immediately have the corollaries 
\begin{corollary}
If $HD(\Lambda_0)<1$, then by choosing $\mathcal{U}$ small, given $\varphi\in\mathcal{U}^*$, $f\in \mathcal{P}_{\varphi,\Lambda}$ and $\epsilon >0$ the function $L_{\varphi, f}$ has finitely many discontinuities in the interval $[c_{\varphi,f}+\epsilon, \infinity)$. Therefore, $c_{\varphi,f}$ is the only possible limit of an infinite sequence of discontinuities of $L_{\varphi, f}$.
\end{corollary}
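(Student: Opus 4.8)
The plan is to read this corollary off Theorem~\ref{principal0}; the only ingredient that goes beyond formal manipulation is the continuous dependence of the Hausdorff dimension of a horseshoe on the diffeomorphism.

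First I would arrange to be in the case $HD(\Lambda)<1$ uniformly over $\mathcal{U}$. Since $HD(\Lambda_0)<1$ and, for $C^2$ surface diffeomorphisms, the hyperbolic continuation $\varphi\mapsto\Lambda(\varphi)$ varies continuously and $\varphi\mapsto HD(\Lambda(\varphi))$ is a continuous function (see \cite{PT93}), after shrinking the neighborhood $\mathcal{U}$ we may assume $\sup_{\varphi\in\mathcal{U}}HD(\Lambda(\varphi))<1$. In particular $HD(\Lambda)<1$ for every $\varphi\in\mathcal{U}^{*}$, so for such $\varphi$ and any $f\in\mathcal{P}_{\varphi,\Lambda}$ the first item of Theorem~\ref{principal0} applies: $L_{\varphi,f}$ has only finitely many discontinuities in every closed subinterval of $I_{\varphi,f}$ that does not contain $c_{\varphi,f}$.

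Next I would confine all the discontinuities of $L_{\varphi,f}$ to a bounded interval. By construction $I_{\varphi,f}=[c_{\varphi,f},\tilde c_{\varphi,f}]$ contains every discontinuity of $L_{\varphi,f}$ (this is exactly why it was introduced, cf.\ the paragraph preceding the theorem), and it is bounded: because $f$ is continuous on the compact set $\Lambda$, the set $\mathcal{L}_{\varphi,f}$ is contained in $(-\infty,\max_{\Lambda}f]$, hence $L_{\varphi,f}$ is constant, equal to $\max L_{\varphi,f}$, on $(\max_{\Lambda}f,\infty)$, which forces $\tilde c_{\varphi,f}\le\max_{\Lambda}f<\infty$. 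Fix $\epsilon>0$. If $c_{\varphi,f}+\epsilon>\tilde c_{\varphi,f}$ there are no discontinuities in $[c_{\varphi,f}+\epsilon,\infty)$ and there is nothing to prove; otherwise $J:=[c_{\varphi,f}+\epsilon,\tilde c_{\varphi,f}]$ is a closed subinterval of $I_{\varphi,f}$ with $c_{\varphi,f}\notin J$, so by the previous paragraph $L_{\varphi,f}$ has finitely many discontinuities in $J$. As every discontinuity lying in $[c_{\varphi,f}+\epsilon,\infty)$ automatically lies in $[c_{\varphi,f}+\epsilon,\tilde c_{\varphi,f}]=J$, the function $L_{\varphi,f}$ has finitely many discontinuities in $[c_{\varphi,f}+\epsilon,\infty)$, proving the first assertion.

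Finally, for the last sentence, suppose $(t_n)_n$ is an infinite sequence of pairwise distinct discontinuities of $L_{\varphi,f}$ with $t_n\to\ell$. By the previous step all $t_n$ lie in the bounded set $I_{\varphi,f}$, so $\ell\in I_{\varphi,f}$; and if $\ell>c_{\varphi,f}$ then, taking $\epsilon=(\ell-c_{\varphi,f})/2>0$, all but finitely many $t_n$ belong to $[c_{\varphi,f}+\epsilon,\infty)$, contradicting the finiteness just established. Hence $\ell=c_{\varphi,f}$, i.e.\ $c_{\varphi,f}$ is the only point that can be a limit of an infinite sequence of discontinuities of $L_{\varphi,f}$. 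The whole argument is elementary once Theorem~\ref{principal0} is in hand; the only step that needs a genuine (if classical) fact is the passage from $HD(\Lambda_0)<1$ to $\sup_{\varphi\in\mathcal{U}}HD(\Lambda(\varphi))<1$ after shrinking $\mathcal{U}$, which rests on the continuity of $HD(K^s)$ and $HD(K^u)$ in the dynamics.
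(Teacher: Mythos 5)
Your argument is correct and is exactly the deduction the paper leaves implicit when it calls the corollary an immediate consequence of Theorem~\ref{principal0}: shrink $\mathcal{U}$ using the continuity of $\varphi\mapsto HD(\Lambda(\varphi))$ (the paper invokes \cite{PV} for this) so that $HD(\Lambda)<1$ throughout, note that the monotone function $L_{\varphi,f}$ is constant off $I_{\varphi,f}=[c_{\varphi,f},\tilde c_{\varphi,f}]$ with $\tilde c_{\varphi,f}\le\max_{\Lambda}f<\infty$, and then apply the first bullet of the theorem to $[c_{\varphi,f}+\epsilon,\tilde c_{\varphi,f}]$. Nothing further is needed.
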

\begin{corollary}
If $HD(\Lambda_0)>1$, then by choosing $\mathcal{U}$ small, given $\varphi\in\mathcal{U}^*$, $f\in \mathcal{P}_{\varphi,\Lambda}$ and $\epsilon >0$ small, the function $L_{\varphi, f}$ has finitely many discontinuities in the interval $[c_{\varphi,f}+\epsilon, \tilde{c}_{\varphi,f}-\epsilon].$ Therefore, $c_{\varphi,f}$ and $\tilde{c}_{\varphi,f}$  are the only possible limits of an infinite sequence of discontinuities of $L_{\varphi, f}$.
\end{corollary}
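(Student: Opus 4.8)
The plan is to reduce the whole statement to a statement about the unstable Cantor set $K^u$ together with a real function on it (its ``restriction'' $h_{\varphi,f}$ recording the relevant value of $f$ along the subhorseshoes $\Lambda_t$), and then to exploit the relation between the Hausdorff dimension of $\mathcal{L}_{\varphi,f}\cap(-\infty,t)$ and the dimension of the stable/unstable Cantor sets of $\Lambda_t$. First I would establish that, generically in $\varphi$ and $f$, one has the identity $HD(\mathcal{L}_{\varphi,f}\cap(-\infty,t)) = HD(\mathcal{M}_{\varphi,f}\cap(-\infty,t)) = HD(\Lambda_t) = HD(K^u_t)+HD(K^s_t)$ for all $t$ outside a controlled set, where $K^u_t,K^s_t$ are the unstable/stable Cantor sets of the subhorseshoe $\Lambda_t=m_{\varphi,f}^{-1}((-\infty,t])$; the equality of the Lagrange and Markov dimension functions, as well as the continuity-type statements, come from the generalizations of Moreira's theorem in \cite{CMM16} and \cite{GCD}, suitably localized away from the endpoints of $I_{\varphi,f}$. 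This reduces the problem to controlling the jumps of $t\mapsto HD(\Lambda_t)$.

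Next, the key point is that $\Lambda_t$ is a nested family of subhorseshoes defined by finitely many forbidden words of the subshift $\Sigma_{\mathcal{B}}$, and its combinatorics changes only at the countably many values $t$ that are Markov values of some periodic orbit, or more precisely at the values where a new admissible finite word becomes forbidden. Away from $c_{\varphi,f}$ (and away from $\tilde c_{\varphi,f}$ when $HD(\Lambda)\ge1$, so that the relevant dimension is $<1$ and the Cantor sets behave rigidly), the dimension $HD(K^u_t)$ is locally determined by a finite complete subsystem, and a discontinuity of $t\mapsto HD(K^u_t)$ at a parameter $t_0$ forces the appearance of a ``gap'' in $K^u_{t_0}$ that carries definite dimension — i.e., a sub-Cantor set of $\Lambda$ of dimension bounded below by some $\delta(t_0)>0$ which is destroyed. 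The strategy is to show that on any closed interval $I\subset I_{\varphi,f}$ bounded away from the bad endpoint(s), there is a uniform lower bound $\delta_I>0$ for the size of such a jump: this is where one uses that, bounded away from $c_{\varphi,f}$, the sets $\Lambda_t$ all contain a fixed subhorseshoe of definite dimension, and that the ``fat'' part of $K^u_t$ (the part accounting for most of the dimension) stabilizes, so only finitely many jumps of size $\ge\delta_I$ can occur before the dimension would exceed $\min\{1,HD(\Lambda)\}$.

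To make the uniform lower bound $\delta_I$ work, I would use the genericity of $f$ in $\mathcal{P}_{\varphi,\Lambda}$ in an essential way: generically $f$ has a unique point of maximum on $\Lambda$ and is ``generic'' with respect to the Markov partition, so the function $t\mapsto HD(\Lambda_t)$ can be compared to the corresponding function for the Markov spectrum of a genuine dynamically defined Cantor set; a jump discontinuity then corresponds, via the scale-recurrence lemma / renormalization arguments of Moreira–Yoccoz, to the disappearance of a self-similar-like piece whose dimension cannot be made arbitrarily small once $t$ is bounded away from the threshold where the dimension vanishes. Concretely: fix $I=[a,b]\subset I_{\varphi,f}$ with $a>c_{\varphi,f}$; then $HD(\Lambda_a)=:d_a>0$, and I claim every discontinuity of $L_{\varphi,f}$ in $I$ has jump $\ge\delta_I$ for some $\delta_I=\delta_I(d_a,\min\{1,HD(\Lambda)\})>0$, whence there are at most $(\,\min\{1,HD(\Lambda)\}-d_a\,)/\delta_I + 1$ of them. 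In the case $HD(\Lambda)\ge1$ one also needs to stay below $1$, hence the extra exclusion of a neighborhood of $\tilde c_{\varphi,f}$: near $\tilde c_{\varphi,f}$ the dimension $HD(K^u_t)+HD(K^s_t)$ crosses $1$ and the dimension of $\mathcal{L}_{\varphi,f}\cap(-\infty,t)$ saturates at $1$ without the Cantor-set dimensions saturating, so the jump sizes there are not controlled and infinitely many discontinuities can accumulate at $\tilde c_{\varphi,f}$.

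The main obstacle I anticipate is precisely the uniform lower bound on the jump sizes on $I$: one must rule out a sequence of discontinuities with jump sizes tending to $0$ accumulating at an interior point of $I$ (or at an endpoint $\ne c_{\varphi,f},\tilde c_{\varphi,f}$). This requires a continuity/semicontinuity statement for the dimension of the subhorseshoes $\Lambda_t$ that is uniform in the combinatorial complexity — i.e., showing that if the combinatorics of $\Lambda_t$ stabilizes to high depth as $t\to t_0$, then $HD(\Lambda_t)\to HD(\Lambda_{t_0})$, so that a persistent jump can only come from a bounded-depth combinatorial change, of which there are finitely many in any compact parameter interval away from the degenerate endpoints. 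Establishing this uniform modulus of continuity, presumably via the regularity of the pressure function associated to the expanding maps $\psi_u,\psi_s$ and a compactness argument over finite subsystems, is the technical heart of the argument.
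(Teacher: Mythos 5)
The corollary is, in the paper, an immediate restatement of the second bullet of Theorem~\ref{principal0}, so what must be assessed is whether your strategy would prove the finiteness of discontinuities on compact subintervals of $I_{\varphi,f}$ bounded away from $c_{\varphi,f}$ and $\tilde c_{\varphi,f}$. Your plan differs fundamentally from the paper's, and the central step you describe is precisely the one you cannot supply. You want a uniform lower bound $\delta_I>0$ on the jump sizes of $L_{\varphi,f}$ on $I$, and then count; but you give no mechanism for this, and the paper neither proves nor uses such a bound. The paper instead argues by contradiction: each discontinuity $t_0$ produces (Proposition~\ref{conection}) a pair of subhorseshoes $\Lambda^s(t_0),\Lambda^u(t_0)\subset\Lambda_{t_0-\eta}$ that do not \emph{connect} (Definition~\ref{conection of horseshoes1}) before $t_0$ yet do connect shortly after; from an infinite family of discontinuities one extracts (Proposition~\ref{cadeias}) arbitrarily long finite families of pairwise non-connecting subhorseshoes; and a counting argument through the finite-valued map $O$ associating periodic words shows two of these must share the same word, forcing a connection (Corollary~\ref{contradiction}) and hence a contradiction. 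This is a combinatorial pigeonhole argument, not a quantitative jump estimate, and it is what makes the proof go through without the uniform bound you identify as ``the technical heart.''

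There is also a conceptual misattribution of where the discontinuity lives. You ascribe it to a jump of $t\mapsto HD(K^u_t)$, but Proposition~\ref{R-generic} shows that $t\mapsto D_u(\Lambda_t)$ and $t\mapsto D_s(\Lambda_t)$ are \emph{continuous} for generic $f$ when the relevant dimension is below $1$. The discontinuity of $L_{\varphi,f}$ comes instead from the internal connectivity structure of $\Lambda_t$: it decomposes into subhorseshoes and transient pieces, $HD(\ell_{\varphi,f}(\Lambda_s))$ is governed by the largest subhorseshoe in that decomposition, and a discontinuity occurs when two subhorseshoes (one carrying most of the unstable, the other most of the stable, dimension) suddenly merge. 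Relatedly, the identity $HD(\mathcal{L}_{\varphi,f}\cap(-\infty,t))=HD(\Lambda_t)$ you posit is not established in the paper; Lemma~\ref{L1} gives only $L_{\varphi,f}(t)\le HD(\Lambda_t)$, and the gap between the two sides is exactly what the discontinuity measures. Finally, for the $HD(\Lambda)\ge 1$ case, the concrete reduction the paper uses — embedding $\Lambda_{\max I}$ in a finite-type hyperbolic set $P\subset\Lambda_{(\tilde c_{\varphi,f}+\max I)/2}$, decomposing $P$ into finitely many subhorseshoes $\tilde\Lambda_i$ of dimension $<1$, and writing $L_{\varphi,f}(t)=\max_i L_i(t)$ for $t\le\max I$ — is the mechanism your ``saturation at $1$'' remark gestures at but does not deliver.
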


\section{Preliminary results} \label{pre}
\subsection{Stable and unstable dimensions}
Given a Markov partition $\mathcal{P}=\{R_a\}_{a\in \mathcal{A}}$, recall that the geometrical description of $\Lambda$ in terms of the Markov partition $\mathcal{P}$ has a combinatorial counterpart in terms of the Markov shift $\Sigma_{\mathcal{B}}\subset \mathcal{A}^{\mathbb{Z}}$.
Given an admissible finite sequence $\alpha=(a_1,...,a_n)\in \mathcal{A}^n$ (i.e., $(a_i,a_{i+1})\in \mathcal{B}$ for all $1\le i<n$), we define
	$$I^u(\alpha)=\{x\in K^u: \psi_u^i(x)\in I^u(a_i,a_{i+1}),\ i=1,2,...,n-1\}$$
and if $\alpha^T=(a_n,a_{n-1},...,a_1),$ we define
	$$I^s(\alpha^T)=\{y\in K^s: \psi_s^i(y)\in I^s(a_{i},a_{i-1}),\ i=2,...,n\}.$$
In a similar way, let $\theta=(a_{s_1},a_{s_1+1},...,a_{s_2})\in \mathcal{A}^{s_2-s_1+1}$ an admissible word where $s_1, s_2 \in \mathbb{Z}$, $s_1 < s_2$ and fix $s_1\le s\le s_2$. Define $$R(\theta;s)=\bigcap_{m=s_1-s}^{s_2-s} \varphi^{-m}(R_{a_{m+s}}).$$
Note that if $x\in R(\theta;s)\cap \Lambda$, then the symbolic representation of $x$ is of the form $\Pi(x)=(\dots,a_{s_1}\dots a_{s-1};a_{s},a_{s+1}\dots a_{s_2}\dots)$, where the letter following to $;$ is in the $0$ position of the sequence.

In our context of dynamically defined Cantor sets, we can relate the length of the unstable and stable intervals determined by an admissible word to its length as a word in the alphabet $\mathcal{A}$ via the {\it bounded distortion property} that lets us conclude that for some constant $c_1>0$
\begin{equation}\label{bdp1}
	e^{-c_1}\le \dfrac{|I^u(\alpha\beta)|}{|I^u(\alpha)|\cdot|I^u(\beta)|}\le e^{c_1} \ \mbox{and} \ e^{-c_1}\le \dfrac{|I^s((\alpha\beta)^T)|}{|I^s(\alpha^T)|\cdot|I^s(\beta^T)|}\le e^{c_1},
	\end{equation}
and also, for some positive constants $\lambda_1,\lambda_2<1$, one has
\begin{equation}\label{bdp2}
	e^{-c_1} \lambda_1^{\abs{\alpha}}\leq\abs{I^u(\alpha)}\leq e^{c_1} \lambda_2^{\abs{\alpha}} \ \mbox{and} \ e^{-c_1} \lambda_1^{\abs{\alpha}}\leq\abs{I^s(\alpha^T)}\leq e^{c_1} \lambda_2^{\abs{\alpha}}.
	\end{equation}
We write $r^{(u)}(\alpha)$ for the unstable scale of $\alpha$, that is, $r^{(u)}(\alpha)=\lfloor \log(1/\abs{I^u(\alpha)})\rfloor$ and similarly, $r^{(s)}(\alpha) =\lfloor \log(1/\abs{I^s(\alpha^T)})\rfloor$ for the stable scale of $\alpha$. 
Write $\alpha^{\ast}=(a_1,a_2,...,a_{n-1})$ if $\alpha=(a_1,a_2,...,a_n)$ and for $r\in \mathbb{N}$ define the sets
$$P^{(u)}_r=\{\alpha\in \mathcal{A}^n \ \mbox{admissible}:  r^{(u)}(\alpha)\geq r \ \mbox{and} \ r^{(u)}(\alpha^{\ast})<r\}$$
and
$$P^{(s)}_r=\{\alpha\in \mathcal{A}^n \ \mbox{admissible}:  r^{(s)}(\alpha)\ge r \ \mbox{and} \ r^{(s)}(\alpha^{\ast})<r\}.$$

Now, given any $X\subset \Lambda$ compact and $\varphi$-invariant we define its projections 
$$\pi^u(X)=\bigcup_{a\in \mathcal{A}} \pi^s_a(X\cap R_a) \ \mbox{and} \ \pi^s(X)=\bigcup_{a\in \mathcal{A}}\pi^u_a(X\cap R_a).$$
We also set
$$\mathcal{C}_u(X,r)=\{\alpha\in P^{(u)}_r:  I^{u}(\alpha)\cap \pi^u(X)\neq \emptyset\}$$
and
$$ \mathcal{C}_s(X,r)=\{\alpha\in P^{(s)}_r:  I^{s}(\alpha^T)\cap \pi^s(X)\neq \emptyset\}$$
whose cardinalities are denoted $N_u(X,r)=|\mathcal{C}_u(X,r)|$ and $N_s(X,r)=|\mathcal{C}_s(X,r)|$. 

Note that by (\ref{bdp2}) for $\alpha \in \mathcal{C}_u(X,r)$ one has $e^{c_1}\lambda_2^{-1}\lambda_2^{\abs{\alpha}}>\abs{I^u(\alpha^{\ast})} > e^{-r}$
and it follows from this that $\abs{\alpha}<r/\log(\lambda_2^{-1})+\log (e^{c_1}\lambda_2^{-1})/\log (\lambda_2^{-1})$ and then 
\begin{equation} \label{expN}
  N_u(X,r)=\abs{\mathcal{C}_u(X,r)} \leq e^{\alpha_1r+\alpha_2}
\end{equation}
where $\alpha_1=\log\abs{\mathcal{A}}/\log(\lambda_2^{-1}) >0$ and $\alpha_2=\log (e^{c_1}\lambda_2^{-1})\cdot \log\abs{\mathcal{A}} /\log (\lambda_2^{-1})>0$ depend only on $\varphi$ and $\Lambda$. This computations also give for $\alpha \in \mathcal{C}_u(X,r)$ that 
\begin{equation}\label{words}
    \abs{\alpha}<\alpha_1r+\alpha_2.\end{equation}
Note that the same inequalities hold for $N_s(X,r)$ and words in $\mathcal{C}_s(X,r)$.

In the article \cite{CMM16} the authors proved the following lemma in the case of $X=\Lambda_{t}$, where $t\in \mathbb{R}$. For completeness, we give a proof here:

\begin{lemma}\label{c_2}
There exists a constant $c_2= c_2(\varphi, \Lambda)\in \mathbb{N}$ such that
if $X$ is a compact, $\varphi$-invariant subset of $\Lambda$, then
	$$N_u(X,m+n)\le |\mathcal{A}|^{c_2}\cdot N_u(X,m)\cdot N_u(X,n)$$
and
	$$N_s(X,m+n)\le |\mathcal{A}|^{c_2}\cdot N_s(X,m)\cdot N_s(X,n)$$
for all $n, m\in\mathbb{N}$.
\end{lemma}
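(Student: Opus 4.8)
The plan is to establish the submultiplicativity of $N_u(X,\cdot)$ by showing that every word in $\mathcal{C}_u(X,m+n)$ can be produced, up to a bounded overlap, by concatenating a word contributing to scale $m$ with a word contributing to scale $n$. First I would fix a word $\alpha \in \mathcal{C}_u(X,m+n)$. By definition, $r^{(u)}(\alpha) \geq m+n$ while $r^{(u)}(\alpha^{\ast}) < m+n$, so $|I^u(\alpha)|$ is comparable to $e^{-(m+n)}$ and in particular small. Reading $\alpha = (a_1,\dots,a_k)$ from the left, I would let $j$ be the first index such that the prefix $\beta := (a_1,\dots,a_j)$ satisfies $r^{(u)}(\beta) \geq m$; minimality gives $r^{(u)}(\beta^{\ast}) < m$, so $\beta \in P^{(u)}_m$, and since $I^u(\beta) \supset I^u(\alpha)$ meets $\pi^u(X)$, in fact $\beta \in \mathcal{C}_u(X,m)$. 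Now write $\alpha = \beta \gamma'$ where $\gamma' = (a_{j+1},\dots,a_k)$, and set $\gamma := (a_j, a_{j+1},\dots,a_k)$ so that $\gamma$ is admissible and overlaps $\beta$ in exactly one letter.

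The next step is to control $\gamma$. Using the bounded distortion estimate \ref{bdp1} one gets $|I^u(\alpha)| \geq e^{-c_1}|I^u(\beta)|\cdot|I^u(\gamma')|$, and combining with $|I^u(\beta)| \leq e^{c_1}\lambda_2^{|\beta|} \cdot (\text{stuff})$ — more directly, from $r^{(u)}(\beta^{\ast}) < m$ one has $|I^u(\beta)| > e^{-m}\cdot(\text{bounded factor})$, hence $|I^u(\gamma')|$, and therefore $|I^u(\gamma)|$, is bounded below by a constant times $e^{-n}$; thus $r^{(u)}(\gamma) \leq n + c$ for some constant $c = c(\varphi,\Lambda)$. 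So $\gamma$ is a word whose unstable scale is at most $n + c$; chopping it further from the left in the same minimal fashion, its prefix at scale exactly $n$ lies in $\mathcal{C}_u(X,n)$ (again $I^u(\gamma) \subset I^u(\text{prefix})$ meets $\pi^u(X)$), and the leftover tail has bounded length, hence belongs to one of at most $|\mathcal{A}|^{c}$ possibilities. Putting this together, the map $\alpha \mapsto (\beta, \text{prefix of }\gamma\text{ at scale }n, \text{short tail})$ is injective once we also remember the one overlapping letter, so
\begin{equation*}
N_u(X,m+n) \leq |\mathcal{A}| \cdot N_u(X,m) \cdot N_u(X,n) \cdot |\mathcal{A}|^{c},
\end{equation*}
which gives the claim with $c_2 = c + 1$ (or any integer dominating the accumulated bounded factors). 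The argument for $N_s$ is identical, reading words from the appropriate end and using the stable half of \ref{bdp1}.

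The main obstacle I anticipate is bookkeeping the bounded-overlap and rounding errors cleanly: the scales $r^{(u)}$ involve floor functions, the bounded distortion constant $c_1$ enters multiplicatively at each concatenation, and the ``one overlapping letter'' between $\beta$ and $\gamma$ must be tracked so that the decomposition $\alpha \mapsto (\beta,\gamma)$ is genuinely injective (two different $\alpha$'s could a priori split into the same pair if one is careless about where the split occurs). All of these are absorbed into the single constant $c_2$ depending only on $\varphi$ and $\Lambda$ — the key point being that none of them depends on $m$, $n$, or on $X$, precisely because the distortion bounds \ref{bdp1}, \ref{bdp2} are uniform and $\pi^u(X) \subset K^u$ for every invariant $X$. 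I would also remark that $\pi^u(X)$ is automatically compact and that $I^u(\beta) \cap \pi^u(X) \neq \emptyset$ whenever $I^u(\alpha) \cap \pi^u(X) \neq \emptyset$ and $\beta$ is a prefix of $\alpha$, since $I^u(\alpha) \subset I^u(\beta)$; this monotonicity is what lets the pieces inherit membership in $\mathcal{C}_u(X,\cdot)$ from $\alpha$, and it is the only place the hypothesis on $X$ is used.
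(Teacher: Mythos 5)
Your decomposition strategy is the natural dual of the paper's covering strategy, and it does lead to the same bound. The paper covers $\pi^u(X)$ by concatenations $I^u(\alpha\beta\gamma)$ with $\alpha\in\mathcal{C}_u(X,n)$, $\beta\in\mathcal{C}_u(X,m)$, and a bounded tail $\gamma\in\mathcal{A}^{c_2}$, noting via \eqref{bdp1}--\eqref{bdp2} that the tail inflates $r^{(u)}(\alpha\beta\gamma)$ past $n+m$, so that counting the covering intervals bounds $N_u(X,n+m)$. You instead start from an $\alpha\in\mathcal{C}_u(X,m+n)$ and chop it, which is the injectivity version of the same count. Both are fine, and both use the same inputs \eqref{bdp1}, \eqref{bdp2}.

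There is, however, one real gap in your write-up, and it sits exactly where you claim the hypothesis on $X$ is \emph{not} doing any work. You justify that the middle piece (the scale-$n$ prefix of $\gamma$) lies in $\mathcal{C}_u(X,n)$ by citing prefix monotonicity, ``$I^u(\gamma)\subset I^u(\text{prefix})$ meets $\pi^u(X)$.'' But for that you first need $I^u(\gamma)\cap\pi^u(X)\neq\emptyset$, and $\gamma=(a_j,\dots,a_k)$ is a \emph{suffix} of $\alpha$, not a prefix: in general $I^u(\alpha)\not\subset I^u(\gamma)$ (if $a_j\neq a_1$ the two intervals are even disjoint), so the containment argument you use for $\beta$ simply does not apply to $\gamma$. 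What is true is that $\psi_u^{\,j-1}\bigl(I^u(\alpha)\bigr)\subset I^u(\gamma)$ and that $\pi^u(X)$ is forward $\psi_u$-invariant \emph{because $X$ is $\varphi$-invariant}; combining these gives $I^u(\gamma)\cap\pi^u(X)\neq\emptyset$, and only then does prefix monotonicity kick in to place the scale-$n$ prefix of $\gamma$ inside $\mathcal{C}_u(X,n)$. So the closing remark in your proposal, that prefix monotonicity is ``the only place the hypothesis on $X$ is used,'' has it backwards: prefix monotonicity uses nothing about $X$, whereas the suffix step is precisely where $\varphi$-invariance of $X$ is essential. The paper's covering version has the same hidden dependence (it covers $I^u(\alpha)\cap\pi^u(X)$ by pulling back a scale-$m$ cover of $\pi^u(X)$ under $\psi_u^{|\alpha|-1}$), so this is a detail worth making explicit in either formulation.

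Two smaller bookkeeping points, neither fatal: (i) when you pass from $\gamma'=(a_{j+1},\dots,a_k)$ to $\gamma=(a_j,\dots,a_k)$, the two intervals $I^u(\gamma')$ and $I^u(\gamma)$ are not nested, only comparable in length up to a bounded factor from one application of $\psi_u$; state this rather than implying $|I^u(\gamma')|$ directly bounds $|I^u(\gamma)|$. (ii) Injectivity of $\alpha\mapsto(\beta,\delta,\text{tail})$ is fine as long as you record that $j=|\beta|$ is recoverable from $\beta$ and that the shared letter $a_j$ is redundant information; spelling this out would make the counting argument airtight.
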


\begin{proof}By symmetry, it is sufficient to show that the sequence $\{N_u(X,r)\}_{r\in\mathbb{N}}$ satisfies the conclusions of the lemma. By (\ref{bdp1}) and (\ref{bdp2}) we have for all $\alpha$, $\beta$, $\gamma$ finite words such that the concatenation $\alpha\beta \gamma$ is admissible
$$|I^u(\alpha\beta \gamma)|\leq e^{2c_1}|I^u(\alpha)|\cdot |I^u(\beta)|\cdot |I^u(\gamma)|\leq  e^{3c_1}\lambda_2^{\abs{\gamma}}\cdot|I^u(\alpha)|\cdot |I^u(\beta)|.$$
Now, we note that, for each $c\in\mathbb{N}$, one can cover $\pi^u(X)$ with no more than $\abs{\mathcal{A}}^{c}\cdot N_u(X,n)\cdot N_u(X,m)$ intervals $I^u(\alpha\beta \gamma)$ with $\alpha\in \mathcal{C}_u(X,n)$, $\beta\in \mathcal{C}_u(X,m)$, $\gamma\in\mathcal{A}^c$ and $\alpha\beta \gamma$ admissible.

Therefore, by taking $c_2=\left\lceil\frac{3c_1}{\log\lambda_2^{-1}}\right\rceil\in\mathbb{N}$
it follows that we can cover $\pi^u(X)$ with no more than $\abs{\mathcal{A}}^{c_2}\cdot N_u(X,n)\cdot N_u(X,m)$ intervals $I^u(\alpha\beta \gamma)$ whose unstable scales satisfy 
$$r^{(u)}(\alpha\beta \gamma)\geq r^{(u)}(\alpha)+r^{(u)}(\beta)\geq n+m.$$
Hence, by definition, we conclude that 
$$N_\textbf{u}(X,n+m)\leq \abs{\mathcal{A}}^{c_2}\cdot N_u(X,n)\cdot N_u(X,m),$$
as we wanted to see.
\end{proof}

From this lemma we get that for each $X\subset \Lambda$ compact, $\varphi$-invariant there exist the limits 
\begin{equation}\label{du}
    D_u(X)=\lim_{r\to \infty}\dfrac{\log N_u(X,r)}{r}=\inf\limits_{r\in\mathbb{N}}\dfrac{\log (|\mathcal{A}|^{c_2}\cdot N_u(X,r))}{r}
    \end{equation}
and	
\begin{equation}
D_s(X)=\lim_{r\to \infty}\dfrac{\log N_s(X,r)}{r}=\inf\limits_{r\in\mathbb{N}}\dfrac{\log (|\mathcal{A}|^{c_2}\cdot N_s(X,r))}{r}.
\end{equation}
It can also be easily verified that the numbers $D_u(X)$ and $D_s(X)$ are the limit capacities of $\pi^u(X)$ and $\pi^s(X)$ respectively.

By (\ref{bdp2}) we have for the constants $\tilde{C}=\log \lambda_1/\log \lambda_2>1$ and $C=e^{c_1\cdot(\tilde{C}+1)}>1$ and any $\alpha$ admissible that
\begin{equation}\label{conservative}
 C^{-1}\abs{I^u(\alpha)}^{\tilde{C}} \leq \abs{I^s(\alpha^T)} \leq C \abs{I^u(\alpha)}^{1/\tilde{C}}
\end{equation}
and for this, we can conclude again that for every $X\subset \Lambda$, compact and $\varphi$-invariant, $D_s(X)$ and $D_u(X)$ are comparable:
\begin{equation}\label{beta}
 \tilde{C}^{-1} D_u(X) \leq D_s(X) \leq \tilde{C} D_u(X)
\end{equation}
and so,
\begin{equation}\label{Ds}
  HD(X)\leq D_s(X)+D_u(X)\leq (\tilde{C}+1)D_s(X)  
\end{equation}
and 
\begin{equation}\label{Du}
   HD(X)\leq D_s(X)+D_u(X)\leq (\tilde{C}+1)D_u(X). 
\end{equation}

\subsection{Sets of finite type and connection of subhorseshoes}

The following definitions and results can be found in \cite{GC}. Fix a horseshoe $\Lambda$ of some diffeomorphism $\varphi:S\rightarrow S$ and $\mathcal{P}=\{R_a\}_{a\in \mathcal{A}}$ some Markov partition for $\Lambda$. Take a finite collection $X$ of finite admissible words of the form $\theta=(a_{-n(\theta)},\dots,a_{-1},a_0,a_1,\dots,a_{n(\theta)})$, we say that the maximal invariant set 
$$M(X)=\bigcap \limits_{m \in \mathbb{Z}} \varphi ^{-m}(\bigcup \limits_{\theta \in X}  R(\theta;0))$$ 
is a \textit{hyperbolic set of finite type}. Even more, it is said to be a \textit{subhorseshoe} of $\Lambda$ if it is nonempty and $\varphi|_{M(X)}$ is transitive. Observe that a subhorseshoe need not be a horseshoe; indeed, it could be a periodic orbit, in which case it will be called trivial.

By definition, hyperbolic sets of finite type have local product structure. In fact, any hyperbolic set of finite type is a locally maximal invariant set of a neighborhood of a finite number of elements of some Markov partition of $\Lambda$.

\begin{definition}
Any $\tau \subset M(X)$ for which there are two different subhorseshoes $\Lambda(1)$ and $\Lambda(2)$ of $\Lambda$ contained in $M(X)$ with 
$$\tau=\{x\in M(X):\ \omega(x)\subset \Lambda(1)\ \text{and}\ \alpha(x)\subset \Lambda(2)  \}$$
will be called a transient set or transient component of $M(X)$.
\end{definition}

Note that by the local product structure, given a transient set $\tau$ as before,
\begin{equation}
    HD(\tau)=HD(K^s(\Lambda(2)))+HD(K^u(\Lambda(1))).
\end{equation}

\begin{proposition}\label{appendix}
Any hyperbolic set of finite type $M(X)$, associated with a finite collection of finite admissible words $X$ as before, can be written as
$$M(X)=\bigcup \limits_{i\in \mathcal{I}} \tilde{\Lambda}_i $$ 
where $\mathcal{I}$ is a finite index set (that may be empty) and for $i\in \mathcal{I}$,\ $\tilde{\Lambda}_i$ is a subhorseshoe or a transient set.
\end{proposition}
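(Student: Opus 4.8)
The plan is to reduce the statement to the Smale spectral decomposition of a subshift of finite type. First I would recode $\varphi|_{M(X)}$ symbolically. After replacing each word in $X$ by its admissible extensions to a common half-length $N$ (this does not change $M(X)$, since $M(X)\subset\Lambda$ by local maximality of $\Lambda$, and each $R(\theta;0)\cap\Lambda$ is the union of the subrectangles of its extensions), a point $x\in\Lambda$ lies in $M(X)$ if and only if every window of length $2N+1$ of its itinerary $\Pi(x)\in\Sigma_{\mathcal B}$ belongs to $X$. Taking these windows as the vertices of a finite directed graph $G$ and the overlap relation as its edges, the system $\varphi|_{M(X)}$ is topologically conjugate to the two-sided subshift of finite type $\Sigma_G$ of bi-infinite paths in $G$. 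If $G$ has no edges, then $M(X)=\emptyset$ and $\mathcal I=\emptyset$; otherwise, let $C_1,\dots,C_k$ be the strongly connected components of $G$ containing at least one edge, and for each $i$ let $\tilde\Lambda_i$ be the set of points whose itinerary is a bi-infinite path lying entirely in $C_i$. Each $\tilde\Lambda_i$ is a nonempty, compact, $\varphi$-invariant hyperbolic set of finite type (it equals $M(X_i)$ for $X_i$ the collection of words indexing the vertices of $C_i$), and it is transitive because $C_i$ is strongly connected; hence it is a subhorseshoe — a periodic orbit, hence trivial, exactly when $C_i$ is a single cycle.

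The combinatorial core is the elementary fact that, for any one-sided path in a finite graph, the set of vertices visited infinitely often lies in a single strongly connected component, which therefore carries an edge, and the path is eventually contained in it. Applying this to the forward and to the backward itinerary of a point $x\in M(X)$ (the latter being a forward path in the reversed graph, which has the same components), one obtains indices $i$ and $j$ with $\omega(x)\subset\tilde\Lambda_i$ and $\alpha(x)\subset\tilde\Lambda_j$; these indices are unique because the $\tilde\Lambda_i$ are pairwise disjoint while $\omega(x)$ and $\alpha(x)$ are nonempty. A short argument using strong connectedness then shows that an itinerary which is eventually in $C_i$ in both time directions can never leave $C_i$: were it to pass through a vertex $w\notin C_i$ at some coordinate, then $w$ would be reachable from $C_i$ and $C_i$ reachable from $w$, forcing $w\in C_i$. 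Consequently, when $i=j$ the whole orbit of $x$ stays in $C_i$, so $x\in\tilde\Lambda_i$; and when $i\neq j$ the point $x$ lies in
$$\tau_{ij}=\{x\in M(X):\omega(x)\subset\tilde\Lambda_i\ \text{and}\ \alpha(x)\subset\tilde\Lambda_j\},$$
which, when nonempty, is a transient component of $M(X)$ in the sense of the definition above, with $\Lambda(1)=\tilde\Lambda_i\neq\tilde\Lambda_j=\Lambda(2)$.

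Assembling the two cases yields
$$M(X)=\bigcup_{i=1}^{k}\tilde\Lambda_i\ \cup\bigcup_{\substack{1\le i,j\le k,\ i\neq j\\ \tau_{ij}\neq\emptyset}}\tau_{ij},$$
a union of at most $k^2$ pieces, each a subhorseshoe or a transient set, which is the assertion of the proposition. The step I expect to require the most care is not the dynamical one — the trapping of forward and backward orbits by single recurrent components is the classical spectral decomposition for subshifts of finite type — but rather the bookkeeping: verifying that each $\tilde\Lambda_i$ really is a hyperbolic set of finite type and a subhorseshoe in the precise sense used here (including the degenerate single-cycle case), and that the passage between the length-$(2N+1)$ window recoding and the original collection $X$ is faithful.
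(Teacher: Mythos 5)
The paper does not actually prove Proposition~\ref{appendix}: it is quoted from \cite{GC} without proof, in a section whose opening paragraph merely indicates that the spectral decomposition theorem is the relevant tool. Your argument therefore cannot be compared to a paper-internal proof, but it is a correct, self-contained reconstruction along exactly the lines one would expect, and the details check out.

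A few remarks on the points you yourself flagged as needing care. The recoding to a common half-length $N$ is legitimate: if $X'$ is the set of all admissible length-$(2N+1)$ extensions of words of $X$ (with $N=\max n(\theta)$), then $M(X')\subset M(X)$ because each $R(\theta';0)\cap\Lambda$ with $\theta'\in X'$ is contained in the corresponding $R(\theta;0)\cap\Lambda$, while $M(X)\subset M(X')$ because the length-$(2N+1)$ central window of $\Pi(\varphi^m x)$, for $x\in M(X)$, is automatically an admissible extension of whichever $\theta\in X$ witnesses $\varphi^m(x)\in R(\theta;0)$. The identification $\tilde\Lambda_i=M(X_i)$ uses the standard fact that any edge of $G$ joining two vertices of the same strongly connected component $C_i$ is itself an edge of $C_i$, so ``all windows in $C_i$'' and ``bi-infinite path in $C_i$'' coincide; this is what makes $\tilde\Lambda_i$ a hyperbolic set of finite type in the sense of the paper. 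Nonemptiness of $\tilde\Lambda_i$ follows because a strongly connected component carrying an edge contains a cycle; transitivity of $\tilde\Lambda_i$ is irreducibility of the two-sided SFT over a strongly connected graph; and the single-cycle case gives exactly the trivial (periodic-orbit) subhorseshoes. The trapping argument (the set of vertices visited infinitely often by a one-sided path lies in one recurrent SCC, and the path eventually stays there) is correct, as is the observation that a bi-infinite path whose two ends are trapped in the same $C_i$ can never leave $C_i$. Finally, your sets $\tau_{ij}$ for $i\ne j$ match the paper's definition of a transient component verbatim, with $\Lambda(1)=\tilde\Lambda_i$ and $\Lambda(2)=\tilde\Lambda_j$ distinct and contained in $M(X)$. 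So the decomposition you write down is precisely the assertion of the proposition.
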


Now, fix $r\geq 2$ and for $x\in \Lambda$, let $e^s_x$ and $e^u_x$ be unit vectors in the stable and unstable directions of $T_xS$. Given some subhorseshoe $\tilde{\Lambda}\subset\Lambda$ we define
$$\mathcal{R}_{\varphi, \tilde{\Lambda}}:=\{f\in C^r(S,\mathbb{R}): \grad f(x) \textrm{ is not perpendicular either to } e^s_x \textrm{  or } e^u_x \textrm{ for all } x\in\tilde{\Lambda}\}.$$ 
In other terms, $\mathcal{R}_{\varphi, \tilde{\Lambda}}$ is the class of $C^r$-functions $f:S\to\mathbb{R}$ that are locally monotone along stable and unstable directions for points in $\tilde{\Lambda}$. The next proposition follows from the results proved in \cite[Remark 1.4]{CMM16}:
\begin{proposition}\label{R-generic} Fix $r\geq 2$. If the subhorseshoe $\tilde{\Lambda}\subset\Lambda$ has Hausdorff dimension smaller than $1$, then $\mathcal{R}_{\varphi, \tilde{\Lambda}}$ is $C^r$-open and dense and for $f\in\mathcal{R}_{\varphi, \tilde{\Lambda}}$ the functions $t\mapsto D_u(\tilde{\Lambda}_{t})$ and $t\mapsto D_s(\tilde{\Lambda}_{t})$ are continuous, where $\tilde{\Lambda}_t = \{x\in\tilde{\Lambda}: \forall n\in \mathbb{Z}, \ f(\varphi^n(x))\leq t\}.$
\end{proposition}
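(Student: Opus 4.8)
The plan is to establish the three assertions --- $C^r$-openness, $C^r$-density, and continuity of $t\mapsto D_u(\tilde{\Lambda}_t)$ and $t\mapsto D_s(\tilde{\Lambda}_t)$ --- in that order. First I would dispose of the trivial case in which $\tilde{\Lambda}$ is a periodic orbit: then $K^u(\tilde{\Lambda})$ and $K^s(\tilde{\Lambda})$ are finite, so $D_u(\tilde{\Lambda}_t)\equiv D_s(\tilde{\Lambda}_t)\equiv 0$ and $\mathcal{R}_{\varphi,\tilde{\Lambda}}$ is cut out by finitely many open dense conditions on $\nabla f$, so there is nothing to prove. For the remaining case I would use that $\tilde{\Lambda}$, being a locally maximal transitive hyperbolic set of the surface diffeomorphism $\varphi$ with $HD(\tilde{\Lambda})<1$, is itself a horseshoe to which the entire apparatus of Section~\ref{pre} applies: it carries a Markov partition, stable and unstable dynamically defined Cantor sets $K^s(\tilde{\Lambda}),K^u(\tilde{\Lambda})$, counting functions $N_u,N_s$, the subadditivity estimate, and the formulas $D_u(X)=\inf_r\frac{\log(|\mathcal{A}|^{c_2}N_u(X,r))}{r}$.

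\emph{Openness} I would obtain by compactness. The invariant directions $x\mapsto e^s_x$ and $x\mapsto e^u_x$ extend continuously near $\tilde{\Lambda}$ (indeed to $C^{1+\alpha}$ foliations), so for fixed $f$ the maps $x\mapsto\langle\nabla f(x),e^s_x\rangle$ and $x\mapsto\langle\nabla f(x),e^u_x\rangle$ are continuous on the compact set $\tilde{\Lambda}$; if $f\in\mathcal{R}_{\varphi,\tilde{\Lambda}}$ they are bounded away from $0$, and any $g$ with $\|g-f\|_{C^1}$ small keeps them so --- hence, since $r\ge 2$, the set is $C^r$-open. For \emph{density}, given $f_0\in C^r(S,\mathbb{R})$ and $\epsilon>0$ I would cover $\tilde{\Lambda}$ by finitely many charts adapted to the stable/unstable directions and add to $f_0$ a perturbation which in each chart is an affine function times a bump; writing $(c_1,c_2,\dots)$ for the finitely many coefficients, the condition that some point of $\tilde{\Lambda}$ fail the monotonicity requirement along $e^s$ or $e^u$ cuts out a Lebesgue-null subset of the parameter space, and this last point is a transversality estimate in which the hypothesis $HD(\tilde{\Lambda})<1$ is exactly what is needed. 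Choosing admissible parameters arbitrarily close to $0$ gives $f\in\mathcal{R}_{\varphi,\tilde{\Lambda}}$ arbitrarily $C^r$-close to $f_0$. Both arguments are standard; the careful version is Remark~1.4 of \cite{CMM16}.

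For the continuity of $t\mapsto D_u(\tilde{\Lambda}_t)$ (the case of $D_s$ being symmetric) I would first record monotonicity: $t\mapsto\tilde{\Lambda}_t$ is non-decreasing and $D_u$ is monotone under inclusion, so $t\mapsto D_u(\tilde{\Lambda}_t)$ is non-decreasing and at each $t_0$ the one-sided limits exist with $\lim_{t\uparrow t_0}\le D_u(\tilde{\Lambda}_{t_0})\le\lim_{t\downarrow t_0}$. \emph{Right-continuity} I would then prove with no genericity at all: since $\tilde{\Lambda}_{t_0}=\bigcap_{t>t_0}\tilde{\Lambda}_t$, continuity of $\pi^u$ gives $\pi^u(\tilde{\Lambda}_{t_0})=\bigcap_{t>t_0}\pi^u(\tilde{\Lambda}_t)$, and for each fixed scale $r$ the finite set $\mathcal{C}_u(\tilde{\Lambda}_t,r)$ is already equal to $\mathcal{C}_u(\tilde{\Lambda}_{t_0},r)$ for $t$ close enough to $t_0$ (a word's interval misses a decreasing intersection of compacta iff it misses one of them, and there are finitely many words in $P^{(u)}_r$); feeding this into $D_u(\tilde{\Lambda}_t)=\inf_r\frac{\log(|\mathcal{A}|^{c_2}N_u(\tilde{\Lambda}_t,r))}{r}$ and interchanging the infima yields $\lim_{t\downarrow t_0}D_u(\tilde{\Lambda}_t)=D_u(\tilde{\Lambda}_{t_0})$.

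The hard part --- and the step I expect to be the main obstacle --- is \emph{left-continuity}, i.e.\ ruling out a downward jump of $D_u(\tilde{\Lambda}_t)$ as $t\uparrow t_0$; this is where $f\in\mathcal{R}_{\varphi,\tilde{\Lambda}}$ is indispensable, since without local monotonicity $f$ could be constant on a positive-dimensional invariant subset of $\tilde{\Lambda}$ and cause a genuine jump. The plan would be: given $\delta>0$, fix a large scale $r$ with $\frac1r\log N_u(\tilde{\Lambda}_{t_0},r)>D_u(\tilde{\Lambda}_{t_0})-\delta/2$; use that local monotonicity of $f$ along stable and unstable directions forces the slabs $\{\,t_0-\eta<f\le t_0\,\}$ to meet $\pi^u(\tilde{\Lambda})$ and $\pi^s(\tilde{\Lambda})$ in a controlled number of pieces of size $O(\eta)$, to conclude that for $\eta$ small all but a $D_u$-negligible proportion of the words of $\mathcal{C}_u(\tilde{\Lambda}_{t_0},r)$ survive in $\tilde{\Lambda}_{t_0-\eta}$; then, using $HD(\tilde{\Lambda})<1$ to keep the relevant Cantor sets thin, assemble the surviving words by concatenation into a hyperbolic set of finite type contained in $\tilde{\Lambda}_{t_0-\eta}$ witnessing $D_u(\tilde{\Lambda}_{t_0-\eta})>D_u(\tilde{\Lambda}_{t_0})-\delta$. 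Making the survival and assembly steps quantitative --- tracking how the height constraint $\sup_n f(\varphi^n\cdot)\le t$ propagates under concatenation of admissible words --- is the technical crux. This is exactly what is carried out in \cite{CMM16} for $\tilde{\Lambda}=\Lambda$; since our subhorseshoe satisfies all the hypotheses used there, the argument transfers verbatim, which is the content of Remark~1.4 of \cite{CMM16}.
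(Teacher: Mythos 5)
The paper does not give a proof of this proposition: it is stated with the single remark ``The next proposition follows from the results proved in \cite{CMM16} (see remark 1.4 in that paper)'' and no argument is supplied. So there is no in-paper proof to compare against, only a citation, and your proposal is a reasonable unpacking of the ideas behind that citation rather than a divergence from the paper.

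Your sketch is sound and follows the route of \cite{CMM16}. You correctly separate what is soft from what is hard: the periodic-orbit reduction and $C^r$-openness by compactness need no genericity; right-continuity of $t\mapsto D_u(\tilde\Lambda_t)$ is a decreasing-intersection-of-compacta argument (your observation that $\mathcal{C}_u(\tilde\Lambda_t,r)$ stabilizes to $\mathcal{C}_u(\tilde\Lambda_{t_0},r)$ as $t\downarrow t_0$, for each fixed $r$, is exactly right, though ``interchanging the infima'' is better phrased as a two-sided $\epsilon$-estimate using $D_u=\inf_r(\cdot)$ and monotonicity); and left-continuity plus density are where $HD(\tilde\Lambda)<1$, the $C^{1+\alpha}$ regularity of $K^{s/u}(\tilde\Lambda)$, and the local-monotonicity of $f$ along $e^{s},e^{u}$ actually enter. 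The one step you defer --- assembling the surviving words of a large scale into a hyperbolic set of finite type inside $\tilde\Lambda_{t_0-\eta}$ with almost the same unstable box dimension --- is precisely the construction of \cite{CMM16}, Proposition~2.9 (and, in this paper's terms, Proposition~\ref{mcu2} is a quantitative refinement of it), so referring back to it there is consistent with what the paper itself does.
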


Fix $f:S\rightarrow \mathbb{R}$  differentiable. A notion that plays an important role in our study of the discontinuities of the map $L_{\varphi, f}$ is the notion of \textit{connection of subhorseshoes} 

\begin{definition}\label{conection of horseshoes1}
Given $\Lambda(1)$ and $\Lambda(2)$ subhorseshoes of $\Lambda$ and $t\in \mathbb{R}$, we said that $\Lambda(1)$ \emph{connects} with $\Lambda(2)$ or that $\Lambda(1)$ and $\Lambda(2)$ \emph{connect} before $t$ if there exist a subhorseshoe $\tilde{\Lambda}\subset \Lambda$ and some $q< t$ with $\Lambda(1) \cup \Lambda(2) \subset \tilde{\Lambda}\subset \Lambda_q$.
\end{definition}

For our present purposes, the next criterion of connection will also be important

\begin{proposition}\label{connection11}
Suppose $\Lambda(1)$ and $\Lambda(2)$ are subhorseshoes of $\Lambda$ and for some $x,y \in \Lambda$ we have $x\in W^u(\Lambda(1))\cap W^s(\Lambda(2))$ and $y\in W^u(\Lambda(2))\cap W^s(\Lambda(1))$. If for some $t\in \mathbb{R}$, it is true that 
$$\Lambda(1) \cup \Lambda(2) \cup \mathcal{O}(x) \cup \mathcal{O}(y) \subset \Lambda_t,$$ then for every $\epsilon >0$,\ $\Lambda(1)$ and $\Lambda(2)$ connect before $t+\epsilon$. 
\end{proposition}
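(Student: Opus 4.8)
The plan is to build the connecting subhorseshoe $\tilde\Lambda$ explicitly as a hyperbolic set of finite type, using the transitivity of $\Lambda(1)$ and $\Lambda(2)$ together with the heteroclinic orbits $\mathcal O(x)$ and $\mathcal O(y)$ to splice them into a single transitive set, all while keeping $f$-values below $t+\epsilon$. First I would fix a Markov partition $\{R_a\}_{a\in\mathcal A}$ of $\Lambda$ of small enough diameter (so that the oscillation of $f$ on each $R_a$ is less than $\epsilon/2$, say), and pass to the symbolic picture $\Sigma_{\mathcal B}$. In symbolic terms, $\Lambda(1)$ and $\Lambda(2)$ correspond to transitive subshifts of finite type; the hypothesis $x\in W^u(\Lambda(1))\cap W^s(\Lambda(2))$ means that the itinerary $\underline a=\Pi(x)$ is negatively asymptotic to $\Sigma_{\mathcal B}(\Lambda(1))$ and positively asymptotic to $\Sigma_{\mathcal B}(\Lambda(2))$, and symmetrically for $y$. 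The key point is that every point whose itinerary we will consider has all its $\sigma$-iterates landing in rectangles $R_a$ that are actually visited by the orbits of $\Lambda(1)\cup\Lambda(2)\cup\mathcal O(x)\cup\mathcal O(y)\subset\Lambda_t$, hence $f\le t$ on those rectangles up to the oscillation bound, giving $f< t+\epsilon$ everywhere along such orbits.

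Next I would make the gluing precise. Choose large integers $N$ and, using the asymptotics of $x$ and $y$, finite admissible words $w_{12}$ (a long central block of $\Pi(x)$) that begins with a long block inside $\Sigma_{\mathcal B}(\Lambda(1))$ and ends with a long block inside $\Sigma_{\mathcal B}(\Lambda(2))$, and similarly $w_{21}$ from $\Pi(y)$ going from $\Lambda(2)$ back to $\Lambda(1)$. Let $X$ be the finite collection of admissible words consisting of: all length-$(2N{+}1)$ subwords appearing in periodic orbits of $\Lambda(1)$, those of $\Lambda(2)$, and all length-$(2N{+}1)$ subwords of $w_{12}$ and of $w_{21}$. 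Define the hyperbolic set of finite type $M(X)=\bigcap_{m\in\mathbb Z}\varphi^{-m}(\bigcup_{\theta\in X}R(\theta;0))$ as in the excerpt. For $N$ large, $M(X)$ contains $\Lambda(1)$, $\Lambda(2)$, and (tails of) $\mathcal O(x),\mathcal O(y)$, and every word in $X$ is a subword of the itinerary of some point of $\Lambda(1)\cup\Lambda(2)\cup\mathcal O(x)\cup\mathcal O(y)\subset\Lambda_t$; hence by the choice of Markov partition diameter, $M(X)\subset\Lambda_{t+\epsilon}$. Now apply Proposition \ref{appendix}: $M(X)=\bigcup_{i\in\mathcal I}\tilde\Lambda_i$ with each $\tilde\Lambda_i$ a subhorseshoe or transient set. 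Because the blocks $w_{12}$ and $w_{21}$ allow one to travel from $\Lambda(1)$ to $\Lambda(2)$ and back, there is an element of the decomposition — or better, a transitive sub-piece built from the $\Lambda(1)$-part, $w_{12}$, the $\Lambda(2)$-part, and $w_{21}$ — which is a genuine subhorseshoe $\tilde\Lambda$ of $\Lambda$ with $\Lambda(1)\cup\Lambda(2)\subset\tilde\Lambda\subset\Lambda_{t+\epsilon}$. Taking $q=t+\epsilon/2 < t+\epsilon$ after redoing the estimate with $\epsilon/2$ in place of $\epsilon$ gives the strict inequality required by Definition \ref{conection of horseshoes1}, so $\Lambda(1)$ and $\Lambda(2)$ connect before $t+\epsilon$.

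The main obstacle I anticipate is verifying transitivity of the glued set $\tilde\Lambda$ — i.e., producing an honest subhorseshoe rather than merely a hyperbolic set of finite type whose pieces are separate horseshoes plus transient components linking them. The issue is that naively the heteroclinic words $w_{12},w_{21}$ only give a transient component between $\Lambda(1)$ and $\Lambda(2)$, not a transitive set containing both. The fix is the standard shadowing/specification argument in the symbolic model: since $\Lambda(1),\Lambda(2)$ are mixing (or at least one can pass to a power and a transitive component), and $w_{12}$ ends in a block of $\Sigma_{\mathcal B}(\Lambda(2))$ while $w_{21}$ starts in a block of $\Sigma_{\mathcal B}(\Lambda(2))$ (and symmetrically at the $\Lambda(1)$ end), one can concatenate freely: any admissible word over the alphabet of $X$ that starts and ends in the $\Lambda(1)$-block can be connected to any other such word, so the subshift generated inside $M(X)$ by requiring all blocks to lie in $X$ and be reachable both ways is transitive. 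One must also check that enlarging $N$ does not destroy the inclusion in $\Lambda_{t+\epsilon}$ (it does not, since all constituent blocks remain subwords of the same four orbits), and that $\tilde\Lambda$ is not reduced to a periodic orbit (immediate, as it contains $\Lambda(1)$ which is a subhorseshoe, in particular infinite unless $\Lambda(1)$ is itself a periodic orbit — and even then $\tilde\Lambda$ contains the distinct orbits of $\Lambda(1)$, $\Lambda(2)$, $x$, $y$). Packaging these symbolic facts cleanly is where the real work lies; everything else is bounded-distortion bookkeeping already available in the excerpt.
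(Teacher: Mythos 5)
The paper itself does not prove Proposition~\ref{connection11}: all of Subsection~2.2 is imported from~\cite{GC}, so there is no in-text argument to compare against. Your symbolic gluing construction is the standard way to establish such a statement and is essentially correct; the step you flag (producing a \emph{transitive} glued set rather than a transient link) is indeed where the rigor must go, but I would steer you away from the shadowing/specification language and toward the irreducible-component decomposition of the block subshift of finite type on the alphabet $X$. Concretely: $\Lambda(1)$ and $\Lambda(2)$ are not assumed mixing, only transitive, and transitivity (irreducibility of their block SFTs) is exactly enough. Because $\mathcal{O}(x),\mathcal{O}(y)\subset M(X)$, any $\Lambda(1)$-block reaches any $\Lambda(2)$-block (travel inside $\Lambda(1)$ to the left end of $w_{12}$, follow $w_{12}$, travel inside $\Lambda(2)$) and returns via $w_{21}$, so all $\Lambda(1)$- and $\Lambda(2)$-blocks lie in one communicating class $\mathcal{C}\subset X$; moreover any block $\theta_1$ on a path $\theta\to\theta_1\to\theta'$ with $\theta,\theta'\in\mathcal{C}$ is itself in $\mathcal{C}$ (since $\theta'\to\theta$ gives $\theta_1\to\theta$), so $M(\mathcal{C})$ is an irreducible SFT. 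Its associated hyperbolic set is then a subhorseshoe $\tilde{\Lambda}$ of $\Lambda$ with $\Lambda(1)\cup\Lambda(2)\subset\tilde{\Lambda}\subset M(X)\subset\Lambda_{t+\epsilon/2}$, and taking $q=t+\epsilon/2<t+\epsilon$ verifies Definition~\ref{conection of horseshoes1}; this also matches the role Proposition~\ref{appendix} plays elsewhere in the paper. The only other point to make explicit is the choice of $N$: you need $2N+1$ at least as long as the defining words of $\Lambda(1)$, $\Lambda(2)$ and of the Markov partition, so that $\alpha(x)\subset\Lambda(1)$ really forces the far-left $(2N+1)$-blocks of $\Pi(x)$ to be admissible $\Lambda(1)$-blocks (and symmetrically at the other three ends), which is what your phrase ``begins with a long block inside $\Sigma_{\mathcal B}(\Lambda(1))$'' is quietly using.
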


 \begin{corollary}\label{connection3}
 Let $\Lambda(1)$,\ $\Lambda(2)$ and $\Lambda(3)$ subhorseshoes of $\Lambda$ and $t\in \mathbb{R}$. If $\Lambda(1)$\ connects with $\Lambda(2)$ before $t$ and $\Lambda(2)$\ connects with $\Lambda(3)$ before $t$. Then also $\Lambda(1)$\ connects with $\Lambda(3)$ before $t$.
 \end{corollary}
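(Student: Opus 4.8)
The plan is to chain the two given connections together through the intermediate subhorseshoe $\Lambda(2)$, using the definition of connection together with Proposition \ref{connection11} to produce a single subhorseshoe inside $\Lambda_{t'}$ containing all three of $\Lambda(1)$, $\Lambda(2)$, $\Lambda(3)$ for some $t' < t$. First I would unwind Definition \ref{conection of horseshoes1}: by hypothesis there are subhorseshoes $\tilde{\Lambda}_{12}$ and $\tilde{\Lambda}_{23}$ of $\Lambda$ and values $q_{12}, q_{23} < t$ with $\Lambda(1)\cup\Lambda(2)\subset\tilde{\Lambda}_{12}\subset\Lambda_{q_{12}}$ and $\Lambda(2)\cup\Lambda(3)\subset\tilde{\Lambda}_{23}\subset\Lambda_{q_{23}}$. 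Set $q = \max\{q_{12},q_{23}\} < t$, so both $\tilde{\Lambda}_{12}$ and $\tilde{\Lambda}_{23}$ lie in $\Lambda_q$. The goal is now to merge $\tilde{\Lambda}_{12}$ and $\tilde{\Lambda}_{23}$ into one transitive set; the obstacle is that the union of two subhorseshoes need not be transitive, so one cannot simply take $\tilde{\Lambda}_{12}\cup\tilde{\Lambda}_{23}$.

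The key step is to apply Proposition \ref{connection11} to the pair $\tilde{\Lambda}_{12}$, $\tilde{\Lambda}_{23}$ (both of which are subhorseshoes of $\Lambda$). Since $\Lambda(2)$ is a common subhorseshoe of both, it is transitive, hence contains a dense orbit; in particular, for any point $z\in\Lambda(2)$ we have $\mathcal{O}(z)\subset\Lambda(2)\subset\tilde{\Lambda}_{12}\cap\tilde{\Lambda}_{23}$, so $z\in W^u(\tilde{\Lambda}_{12})\cap W^s(\tilde{\Lambda}_{23})$ and also $z\in W^u(\tilde{\Lambda}_{23})\cap W^s(\tilde{\Lambda}_{12})$ — indeed $z$ itself lies in both sets, so its stable and unstable sets trivially meet the required pieces. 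Thus taking $x = y = z$, the hypothesis $\tilde{\Lambda}_{12}\cup\tilde{\Lambda}_{23}\cup\mathcal{O}(x)\cup\mathcal{O}(y)\subset\Lambda_q$ holds because every term is contained in $\Lambda_q$. Proposition \ref{connection11} then yields, for every $\epsilon>0$, a subhorseshoe $\tilde{\Lambda}(\epsilon)\subset\Lambda$ with $\tilde{\Lambda}_{12}\cup\tilde{\Lambda}_{23}\subset\tilde{\Lambda}(\epsilon)\subset\Lambda_{q+\epsilon}$.

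To finish, choose $\epsilon>0$ small enough that $q + \epsilon < t$ (possible since $q < t$). Then $\tilde{\Lambda} := \tilde{\Lambda}(\epsilon)$ is a subhorseshoe of $\Lambda$ with
$$\Lambda(1)\cup\Lambda(3)\subset\Lambda(1)\cup\Lambda(2)\cup\Lambda(3)\subset\tilde{\Lambda}_{12}\cup\tilde{\Lambda}_{23}\subset\tilde{\Lambda}\subset\Lambda_{q+\epsilon},$$
and $q+\epsilon < t$. By Definition \ref{conection of horseshoes1} this says precisely that $\Lambda(1)$ connects with $\Lambda(3)$ before $t$, which is the claim. The only delicate point worth double-checking is that the application of Proposition \ref{connection11} really only requires a single point whose full orbit stays in the relevant level set — here that point is any point of $\Lambda(2)$, and transitivity (indeed mere nonemptiness) of $\Lambda(2)$ supplies it; the heteroclinic conditions $x\in W^u(\tilde{\Lambda}_{12})\cap W^s(\tilde{\Lambda}_{23})$ etc.\ are automatic because $\Lambda(2)$ sits inside the intersection $\tilde{\Lambda}_{12}\cap\tilde{\Lambda}_{23}$.
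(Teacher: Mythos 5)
Your proof is correct. Note that the paper itself does not prove this corollary; it cites the reference \cite{GC}, so there is no in-text argument to compare against. That said, the chain-through argument you give is the natural one: you unwind the hypotheses into subhorseshoes $\tilde{\Lambda}_{12}\supset\Lambda(1)\cup\Lambda(2)$ and $\tilde{\Lambda}_{23}\supset\Lambda(2)\cup\Lambda(3)$ both sitting inside $\Lambda_q$ with $q<t$, observe that any point $z\in\Lambda(2)\subset\tilde{\Lambda}_{12}\cap\tilde{\Lambda}_{23}$ trivially supplies the heteroclinic intersection points required by Proposition \ref{connection11} (indeed $z$ lies in each subhorseshoe, hence in both stable and unstable sets, and $\mathcal{O}(z)\subset\Lambda(2)\subset\Lambda_q$), and then apply Proposition \ref{connection11} with a sufficiently small $\epsilon$ to merge $\tilde{\Lambda}_{12}$ and $\tilde{\Lambda}_{23}$ into a single subhorseshoe inside $\Lambda_{q'}$ for some $q'<t$. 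The one small edge case you leave implicit — that $\tilde{\Lambda}_{12}=\tilde{\Lambda}_{23}$ is possible — is harmless, since then the conclusion is immediate without Proposition \ref{connection11}, and your $\epsilon$-shrinking step ($q+\epsilon<t$) is exactly what is needed to convert the ``connect before $q+\epsilon$ for all $\epsilon>0$'' conclusion into ``connect before $t$.''
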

 
\section{Proof of Theorem \ref{principal0}}

The proof when the Hausdorff dimension of the horseshoe is less than $1$, is by contradiction and includes several technical steps: we suppose the existence of an infinite sequence of discontinuities $\{t_n\}_{n\in\mathbb{N}}$ of the map $L_{\varphi, f}$ in some closed subinterval of $I_{\varphi,f}$ that does not contain the first accumulation point of the Lagrange spectrum. Then, we associate to each $n$ a pair of subhorseshoes 
$\Lambda^s_n$ and $\Lambda^u_n$ that do not connect before $t_n$ but they connect little time after it. We show that for some $\theta \in \{s,u\}$ the sequence $\{\Lambda^{\theta}_n\}_{n\in\mathbb{N}}$ has the property that given $N\in \mathbb{N}$ arbitrary, there are $n_1<n_2<...<n_N$ such that for $i,j\in  \{1,...,N \}$ with $i\neq j$,\ $\Lambda^{\theta}_{n_i}$ and $\Lambda^{\theta}_{n_j}$ do not connect before $\max \{ t_{n_i}, t_{n_j} \}.$ 

On the other hand, by choosing correct scales (at the level of sequences), we show that for every $n$, we can associate a periodic point $p_n$ (with period bounded by a fixed constant) in such a way that it is possible to connect $\Lambda^{\theta}_n$ and $\Lambda^{\theta}_m$ before $\max \{ t_m, t_n \}$ if $p_n=p_m$. This allows us to obtain the desired contradiction. The proof when the Hausdorff dimension of the horseshoe is greater than or equal to $1$ is reduced to the previous case.

\subsection{The residual subsets} 
In this short subsection, we introduce the residual sets with which we are going to work. First, using the spectral decomposition theorem, it follows the next result from \cite{M50}: 

\begin{proposition}\label{dimension}
There exists a residual subset $\mathcal{U}^*\subset \mathcal{U}$ with the property that for every subhorseshoe $\tilde{\Lambda}\subset \Lambda$ and any $f\in C^1(S,\mathbb{R})$ such that there exists some point in $\tilde{\Lambda}$ with its gradient not parallel to either the stable direction or the unstable direction, one has 
	$$HD(f(\widetilde{\Lambda})) = \min \{1,HD(\widetilde{\Lambda})\}.$$
\end{proposition}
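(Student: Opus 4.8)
\medskip
\noindent\textbf{Proof proposal.}
The plan is to deduce the statement from the result of \cite{M50}, applied to one subhorseshoe at a time, together with the observation that only countably many subhorseshoes need be controlled, plus a Baire category argument. Observe first that the inequality $HD(f(\widetilde{\Lambda}))\le\min\{1,HD(\widetilde{\Lambda})\}$ requires no genericity at all: $f(\widetilde{\Lambda})$ is a subset of $\reals$, so its Hausdorff dimension is at most $1$, while $f$ is Lipschitz on the compact surface $S$, so $HD(f(\widetilde{\Lambda}))\le HD(\widetilde{\Lambda})$; and if $\widetilde{\Lambda}$ is a trivial subhorseshoe (a periodic orbit) then both sides vanish. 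Hence the whole content is the reverse inequality for nontrivial subhorseshoes, and this is precisely what \cite{M50} yields after a $C^{r}$-perturbation of $\varphi$; the role of the spectral decomposition theorem is, as in the corresponding result of \cite{M50}, to organize the relevant invariant sets into basic pieces and to guarantee that each such $\widetilde{\Lambda}$ is a basic set to which the machinery of \cite{M50} applies, with the relation $HD(\widetilde{\Lambda})=HD(K^{s}(\widetilde{\Lambda}))+HD(K^{u}(\widetilde{\Lambda}))$ and with $K^{s}(\widetilde{\Lambda})$, $K^{u}(\widetilde{\Lambda})$ varying controllably with $\varphi$.

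Next I would reduce to countably many subhorseshoes. Every subhorseshoe of $\Lambda$ is transitive and locally maximal, hence of finite type: by the discussion in Section \ref{pre} it coincides with the maximal invariant set $M(X)$ of a finite collection $X$ of finite admissible words of $\mathcal{B}$ (equivalently, it is one of the finitely many pieces $\widetilde{\Lambda}_{i}$ produced by Proposition \ref{appendix} out of some finite $X$). Since the alphabet $\mathcal{A}$ is finite, there are only countably many such collections $X$, so $\Lambda$ has at most countably many subhorseshoes; enumerate them as $\widetilde{\Lambda}^{(1)},\widetilde{\Lambda}^{(2)},\dots$. Because the hyperbolic continuation of $\Lambda_{0}$ over $\mathcal{U}$ preserves all combinatorial data, each $\widetilde{\Lambda}^{(k)}$ has a well-defined continuation over all of $\mathcal{U}$, and this enumeration is coherent as $\varphi$ ranges over $\mathcal{U}$.

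Now fix $k$ and invoke \cite{M50}: there is a residual subset $\mathcal{U}_{k}\subset\mathcal{U}$ such that for every $\varphi\in\mathcal{U}_{k}$ and every $f\in C^{1}(S,\reals)$ whose gradient at some point $x_{0}\in\widetilde{\Lambda}^{(k)}$ is parallel to neither the stable nor the unstable direction, one has $HD(f(\widetilde{\Lambda}^{(k)}))=\min\{1,HD(\widetilde{\Lambda}^{(k)})\}$. To pass from the hypothesis ``at some point'' to the form actually used in \cite{M50}, note that the non-degeneracy at $x_{0}$ persists on a whole neighborhood of $x_{0}$, by continuity of $\grad f$ and of the stable and unstable line fields on $\widetilde{\Lambda}^{(k)}$; refining the Markov partition and using transitivity, this neighborhood meets $\widetilde{\Lambda}^{(k)}$ in a sub-piece that still has Hausdorff dimension $HD(\widetilde{\Lambda}^{(k)})$, so applying the image–dimension formula of \cite{M50} to that piece and using monotonicity of $HD$ delivers $HD(f(\widetilde{\Lambda}^{(k)}))\ge\min\{1,HD(\widetilde{\Lambda}^{(k)})\}$. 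Finally put $\mathcal{U}^{*}=\bigcap_{k\ge 1}\mathcal{U}_{k}$; since $\textrm{Diff}^{2}(S)$ is a Baire space and $\mathcal{U}$ is open, $\mathcal{U}^{*}$ is residual in $\mathcal{U}$, and for $\varphi\in\mathcal{U}^{*}$ the claimed identity holds for all subhorseshoes of $\Lambda$ and all admissible $f$ simultaneously.

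The step I expect to be the real obstacle is the reduction to a countable list: one must be confident that ``subhorseshoe of $\Lambda$'' genuinely reduces to finite combinatorial data — that there is no infinite-type transitive locally maximal hyperbolic subset of $\Lambda$ that would have to be treated separately — and, granting this, that the residual set $\mathcal{U}_{k}$ furnished by \cite{M50} can be taken inside $\mathcal{U}$ and made to depend only on the finite word-collection defining $\widetilde{\Lambda}^{(k)}$. Both are soft points once \cite{M50} is used as a black box, but they are where the care lies; the quantitative heart — that the stable and unstable Cantor sets arising from surface diffeomorphisms are $C^{r}$-generically of the type for which non-degenerate $C^{1}$ images attain dimension $\min\{1,\,HD(K^{s})+HD(K^{u})\}$ — is entirely internal to \cite{M50}.
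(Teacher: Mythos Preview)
Your proposal is correct and is essentially a detailed expansion of the paper's own treatment: the paper does not give an explicit proof but simply states that the proposition ``follows from \cite{M50} using the spectral decomposition theorem,'' and your argument---countably many subhorseshoes indexed by finite combinatorial data, a residual $\mathcal{U}_{k}$ from \cite{M50} for each, then a Baire intersection---is exactly the natural way to unpack that sentence. Your identification of the only delicate point (that every subhorseshoe is of finite type, hence there are countably many) is also on target and is consistent with the paper's own Section~\ref{pre} framework.
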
 
that we use to prove the next proposition

%%\begin{proposition} \label{dimension}
%%There exists a residual subset $\mathcal{U}^{**}\subset \mathcal{U}$ such that for any $\varphi \in \mathcal{U}^{**}$ and $f\in\mathcal{R}^1_{\varphi, \Lambda}$ and for every subhorseshoe $\widetilde{\Lambda}\subset\Lambda$ one has 
%%$$HD(f(\widetilde{\Lambda})) = HD(\widetilde{\Lambda}).$$
%%\end{proposition}  
\begin{proposition} \label{lagrange1}
If $\mathcal{U}^*$ is as in Proposition \ref{dimension} and $r \geq 2$, then for any $\varphi \in \mathcal{U}^*$, there exists a $C^r$-residual subset $\mathcal{P}_{\varphi,\Lambda}$ such that for every subhorseshoe $\widetilde{\Lambda}\subset\Lambda$ and any $f\in \mathcal{P}_{\varphi,\Lambda}$ one has 
$$\min \{1,HD(\widetilde{\Lambda})\}=HD(\ell_{\varphi,f}(\widetilde{\Lambda}))=HD(m_{\varphi,f}(\widetilde{\Lambda})).$$
Even more, if $HD(\tilde{\Lambda})<1$ one has $\mathcal{P}_{\varphi,\Lambda}\subset \mathcal{R}_{\varphi,\tilde{\Lambda}}.$ 
\end{proposition}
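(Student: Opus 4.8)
The plan is to combine Proposition \ref{dimension} with a countable-intersection (residuality) argument, exploiting the fact that $\Lambda$ has only countably many subhorseshoes of finite type up to the combinatorial data that determines them, and then to upgrade the statement about $HD(f(\tilde\Lambda))$ to the corresponding statement about $\ell_{\varphi,f}$ and $m_{\varphi,f}$. First I would fix $\varphi\in\mathcal U^*$. The key structural observation is that every subhorseshoe $\tilde\Lambda\subset\Lambda$ is a locally maximal invariant set determined by a finite collection of finite admissible words in the alphabet $\mathcal A$ (this is exactly the ``hyperbolic set of finite type'' discussion preceding Proposition \ref{appendix}); hence there are only countably many such $\tilde\Lambda$, say enumerated $\tilde\Lambda_1,\tilde\Lambda_2,\dots$. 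For each index $j$, I would set $\mathcal R_j:=\mathcal R_{\varphi,\tilde\Lambda_j}$ if $HD(\tilde\Lambda_j)<1$ and $\mathcal R_j:=C^r(S,\mathbb R)$ otherwise; by Proposition \ref{R-generic}, $\mathcal R_j$ is $C^r$-open and dense in the first case (and trivially so in the second). I would also let $\mathcal D_j$ be the set of $f\in C^r(S,\mathbb R)$ such that \emph{some} point of $\tilde\Lambda_j$ has $\grad f$ not parallel to either the stable or unstable direction at that point — this is clearly $C^r$-open and dense (one only needs to perturb $f$ near a single point of $\tilde\Lambda_j$). Then I would define
$$\mathcal P_{\varphi,\Lambda}:=\bigcap_{j\geq 1}\bigl(\mathcal R_j\cap\mathcal D_j\bigr),$$
which is $C^r$-residual by Baire, and by construction $\mathcal P_{\varphi,\Lambda}\subset\mathcal R_{\varphi,\tilde\Lambda}$ whenever $HD(\tilde\Lambda)<1$.

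Next I would verify the dimension identity for a fixed $f\in\mathcal P_{\varphi,\Lambda}$ and a fixed subhorseshoe $\tilde\Lambda=\tilde\Lambda_j$. Since $f\in\mathcal D_j$, Proposition \ref{dimension} applies and gives $HD(f(\tilde\Lambda))=\min\{1,HD(\tilde\Lambda)\}$. Now $f(\tilde\Lambda)\supseteq m_{\varphi,f}(\tilde\Lambda)\supseteq\ell_{\varphi,f}(\tilde\Lambda)$ is false as stated — rather, one has the pointwise inequalities $\ell_{\varphi,f}(x)\leq m_{\varphi,f}(x)$ and $m_{\varphi,f}(x)=\sup_n f(\varphi^n x)\in \overline{f(\mathcal O(x))}\subset f(\tilde\Lambda)$ (the last because $\tilde\Lambda$ is compact and $\varphi$-invariant and $f$ continuous), so $m_{\varphi,f}(\tilde\Lambda)\subset f(\tilde\Lambda)$, giving the upper bound $HD(m_{\varphi,f}(\tilde\Lambda))\leq HD(f(\tilde\Lambda))=\min\{1,HD(\tilde\Lambda)\}$, and likewise for $\ell_{\varphi,f}$. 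The real work is the matching lower bound: I must show $HD(\ell_{\varphi,f}(\tilde\Lambda))\geq\min\{1,HD(\tilde\Lambda)\}$. For this I would exploit transitivity of $\varphi|_{\tilde\Lambda}$ together with the existence, for any $\epsilon>0$, of subhorseshoes $\tilde\Lambda'\subset\tilde\Lambda$ with $HD(\tilde\Lambda')>HD(\tilde\Lambda)-\epsilon$ on which $\sup_{x\in\tilde\Lambda'}m_{\varphi,f}(x)$ is as close as desired to $\max_{\tilde\Lambda} f$ restricted to a suitable sub-region — more precisely, I would follow the scheme of \cite{CMM16}/\cite{GCD}: choose a point $z\in\tilde\Lambda$ near which $f$ attains values close to $\max_{\tilde\Lambda}f$ and such that $\grad f(z)$ is transverse to both invariant directions, localize to a small subhorseshoe $\tilde\Lambda''\subset\tilde\Lambda$ whose forward orbit has Markov value realized near $z$, so that on $\tilde\Lambda''$ the functions $\ell_{\varphi,f}$ and $m_{\varphi,f}$ essentially agree (up to an error controlled by the diameter) and their image has dimension controlled below by the dimension of $\tilde\Lambda''$ via the transversality of $\grad f$ — this is where $f\in\mathcal R_{\varphi,\tilde\Lambda''}$ (or rather $f\in\mathcal D_{j''}$ combined with Proposition \ref{dimension} applied to $\tilde\Lambda''$) is used. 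Letting $\tilde\Lambda''$ exhaust the dimension of $\tilde\Lambda$ yields $HD(\ell_{\varphi,f}(\tilde\Lambda))\geq\min\{1,HD(\tilde\Lambda)\}$, and the chain of inequalities collapses to the asserted equalities.

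I expect the main obstacle to be precisely this lower-bound step: arranging a family of subhorseshoes of $\tilde\Lambda$ whose Hausdorff dimension approaches $HD(\tilde\Lambda)$ while simultaneously controlling the oscillation of $f$ along their orbits so that the Lagrange-value map (a limsup, hence a priori much smaller image than $f(\tilde\Lambda)$) still has large image. The resolution is that on a sufficiently small subhorseshoe the Markov value is nearly constant and the limsup defining $\ell_{\varphi,f}$ coincides with the sup defining $m_{\varphi,f}$ up to an arbitrarily small error, so one transfers the dimension bound for $f(\tilde\Lambda'')$ — itself supplied by Proposition \ref{dimension}, whose hypothesis is met since $f\in\mathcal D_{j''}$ — to $\ell_{\varphi,f}(\tilde\Lambda)$. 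The countability of subhorseshoes is what makes the single residual set $\mathcal P_{\varphi,\Lambda}$ work uniformly over all $\tilde\Lambda\subset\Lambda$ at once, and the inclusion $\mathcal P_{\varphi,\Lambda}\subset\mathcal R_{\varphi,\tilde\Lambda}$ in the sub-unidimensional case is immediate from the definition of $\mathcal P_{\varphi,\Lambda}$ as an intersection that includes each $\mathcal R_{\varphi,\tilde\Lambda}$.
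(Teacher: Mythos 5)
The construction of your residual set is the place where the proposal diverges from what the paper actually needs, and it leaves a genuine gap. Your class $\mathcal D_j$ only asks that \emph{some} point of $\tilde\Lambda_j$ have $\grad f$ transverse to both invariant directions — precisely the hypothesis of Proposition~\ref{dimension}, so it does give you $HD(f(\tilde\Lambda_j))=\min\{1,HD(\tilde\Lambda_j)\}$. But the lower bound $HD(\ell_{\varphi,f}(\tilde\Lambda))\geq\min\{1,HD(\tilde\Lambda)\}$ is not obtained from $HD(f(\tilde\Lambda))$ by a soft ``$\ell$ and $m$ nearly coincide on a small subhorseshoe'' approximation: pointwise closeness of $\ell_{\varphi,f}$ and $m_{\varphi,f}$ does not transfer Hausdorff dimension from one image to the other, and on a subhorseshoe where $m_{\varphi,f}$ is nearly constant its image is confined to a tiny interval with no a priori dimension control. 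What the paper actually does is invoke the construction from \cite{MR2}, whose hypothesis is the strictly stronger \emph{unique-maximum} condition
$$H_{\widetilde\Lambda}=\{f\in C^r(S,\mathbb R): |M_{\widetilde\Lambda,f}|=1 \text{ and } Df_z(e^{s,u}_z)\neq 0 \text{ for } z\in M_{\widetilde\Lambda,f}\},$$
and which produces a subhorseshoe $\widetilde\Lambda^\epsilon\subset\widetilde\Lambda\setminus\{x_M\}$ of almost full dimension, an iterate $j_0$, and a local $C^1$-diffeomorphism $\tilde A$ sending stable/unstable directions to stable/unstable directions, such that $f\circ\varphi^{j_0}\circ\tilde A$ maps an open-in-$\widetilde\Lambda^\epsilon$ piece into $\ell_{\varphi,f}(\widetilde\Lambda)$ and still has transverse gradient. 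Proposition~\ref{dimension} is then applied to this \emph{different} function $f\circ\varphi^{j_0}\circ\tilde A$, not to $f$, which is what converts the dimension of $\widetilde\Lambda^\epsilon$ into a lower bound on $HD(\ell_{\varphi,f}(\widetilde\Lambda))$.

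So the residual set in your proposal is too large: you intersect with $\mathcal D_j$ where the paper intersects with $H_{\tilde\Lambda_j}$ (and with $\mathcal R_{\varphi,\tilde\Lambda_j}$ when the dimension is below one, which you do include). The surrounding scaffolding — countability of subhorseshoes, the Baire intersection, the upper bound via $\ell_{\varphi,f}(X)\subset m_{\varphi,f}(X)\subset f(X)$ — is correct and matches the paper. To repair the argument, replace $\mathcal D_j$ by $H_{\tilde\Lambda_j}$ (which the paper shows is $C^r$-open and dense following \cite{MR2}), and replace the heuristic ``dimension transfer via pointwise approximation'' by the explicit construction of $\tilde A$, $j_0$, $\widetilde\Lambda^\epsilon$ from \cite{MR2} followed by an application of Proposition~\ref{dimension} to the composed map.
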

\begin{proof}
Following the ideas of the proof of \cite[Theorem 1]{MR2} we see that given a subhorseshoe $\widetilde{\Lambda}\subset\Lambda$, the set 
$$H_{\widetilde{\Lambda}}=\{f\in C^r(S,\mathbb{R}): \abs{M_{\widetilde{\Lambda}, f}}=1\ \text{and if}\ z\in M_{\widetilde{\Lambda}, f},\ Df_z(e^{s,u}_z)\neq 0  \}$$
%$$H_{\widetilde{\Lambda}}=\{f\in C^r(S,\mathbb{R}): \abs{M_{\widetilde{\Lambda}, f}}=1\ \text{and}\ \ Df_z(e^{s,u}_z)\neq 0 \ \text{if}\  z\ \text{is the element of}\ M_{\widetilde{\Lambda}, f} \}$$
is $C^r$- open and dense, where $M_{\widetilde{\Lambda}, f}= \{z\in \widetilde{\Lambda}: f(z)=\max f|_{\tilde{\Lambda}} \}$.

If $HD(\tilde{\Lambda})<1$ set $\mathcal{H}_{\widetilde{\Lambda}}=H_{\widetilde{\Lambda}}\cap \mathcal{R}_{\varphi,\tilde{\Lambda}}$ (which is residual by Proposition \ref{R-generic}) and $\mathcal{H}_{\widetilde{\Lambda}}=H_{\widetilde{\Lambda}}$ in other case. Define then  
$$\mathcal{P}_{\varphi,\Lambda}:= \bigcap \limits_{ \substack{\widetilde{\Lambda}\subset\Lambda\  \\ subhorseshoe}}\mathcal{H}_{\widetilde{\Lambda}}.$$

In \cite{MR2} is also proved that for any such subhorseshoe $\widetilde{\Lambda}\subset\Lambda$ and $f \in \mathcal{P}_{\varphi,\Lambda}$ if $x_M$ is the unique element where $f|_{\widetilde{\Lambda}}$ attains its maximum value, then for any $\epsilon>0$ there exists some subhorseshoe $\widetilde{\Lambda}^{\epsilon}\subset \widetilde{\Lambda} \setminus \{ x_M\}$ with 
$$HD(\widetilde{\Lambda}^{\epsilon})\geq HD(\widetilde{\Lambda})(1-\epsilon)$$ 
and such that for some point $d\in \widetilde{\Lambda}^{\epsilon}$ there exists a local $C^{1}$-diffeomorphism $\tilde{A}$ defined in a neighborhood $U_{d}$ of $d$ such that 
$$f(\varphi^{j_0}(\tilde{A}(\tilde{\Lambda}_{j_0})))\subset \ell_{\varphi,f}(\widetilde{\Lambda}),$$
where $j_{0}$ is an integer and $\tilde{\Lambda}_{j_0}\subset \widetilde{\Lambda}^{\epsilon}$ has nonempty interior in $\widetilde{\Lambda}^{\epsilon}$ and then is such that $HD(\tilde{\Lambda}_{j_0})=HD(\widetilde{\Lambda}^{\epsilon})$. Moreover, it is also proved that $\dfrac{\partial \tilde{A}}{\partial e_{x}^{s,u}}\parallel e^{s,u}_{\tilde{A}(x)}$, for $x\in U_{d}\cap  \widetilde{\Lambda}^{\epsilon}$ and then, $\nabla (f\circ \varphi^{j_{0}} \circ \tilde{A})(x)\nparallel e_{x}^{s,u}$ for every $x \in \tilde{\Lambda}_{j_0}$.

Extending properly $f\circ \varphi^{j_{0}} \circ \tilde{A}$, and letting $\epsilon$ tend to $0$; it follows from this and Proposition \ref{dimension} that
$$ \min\{1,HD(\tilde{\Lambda})\}\leq HD(\ell_{\varphi,f}(\widetilde{\Lambda})).$$
An elementary compactness argument shows that  $\{\ell_{\varphi, f}(x):x\in X\}\subset\{m_{\varphi, f}(x): x\in X\}\subset f(X)$
whenever $X\subset M$ is a compact $\varphi$-invariant subset. It follows that
$$\min\{1,HD(\tilde{\Lambda})\}\leq HD(\ell_{\varphi,f}(\widetilde{\Lambda})) \leq HD(m_{\varphi,f}(\widetilde{\Lambda})) \leq HD(f(\widetilde{\Lambda})) \leq \min\{1,HD(\tilde{\Lambda})\},$$
as we wanted to see.
\end{proof}

\begin{corollary}\label{max}
Given $\varphi\in\mathcal{U}^*$ and $f\in \mathcal{P}_{\varphi,\Lambda}$, one has
    $$\max L_{\varphi, f}=HD(\mathcal{L}_{\varphi,f})=\min \{1,HD(\Lambda)\}.$$
\end{corollary}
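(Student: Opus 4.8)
The plan is to reduce the statement entirely to Proposition~\ref{lagrange1} applied to the full horseshoe, together with a soft remark about the shape of the function $L_{\varphi,f}$.

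\smallskip

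First I would record the elementary identity $\max L_{\varphi,f}=HD(\mathcal{L}_{\varphi,f})$. Since $f$ is continuous and $\Lambda$ is compact, the spectrum $\mathcal{L}_{\varphi,f}=\ell_{\varphi,f}(\Lambda)$ is a bounded subset of $\mathbb{R}$; choosing $T$ with $\mathcal{L}_{\varphi,f}\subset(-\infty,T)$ gives $\mathcal{L}_{\varphi,f}\cap(-\infty,T)=\mathcal{L}_{\varphi,f}$, hence $L_{\varphi,f}(T)=HD(\mathcal{L}_{\varphi,f})$. On the other hand, monotonicity of Hausdorff dimension yields $L_{\varphi,f}(t)=HD(\mathcal{L}_{\varphi,f}\cap(-\infty,t))\le HD(\mathcal{L}_{\varphi,f})$ for every $t\in\mathbb{R}$. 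Therefore the supremum defining $\max L_{\varphi,f}$ is attained (at $t=T$) and equals $HD(\mathcal{L}_{\varphi,f})$.

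\smallskip

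Next I would observe that $\Lambda$ is itself a subhorseshoe of $\Lambda$ in the sense of Section~\ref{pre}: taking $X=\{(a):a\in\mathcal{A}\}$ one has $M(X)=\Lambda$, which is nonempty and, being mixing, transitive. Hence Proposition~\ref{lagrange1} applies with $\widetilde{\Lambda}=\Lambda$, and since $f\in\mathcal{P}_{\varphi,\Lambda}$ it gives
$$\min\{1,HD(\Lambda)\}=HD(\ell_{\varphi,f}(\Lambda))=HD(m_{\varphi,f}(\Lambda)).$$
By definition $\ell_{\varphi,f}(\Lambda)=\mathcal{L}_{\varphi,f}$, so $HD(\mathcal{L}_{\varphi,f})=\min\{1,HD(\Lambda)\}$; combining with the first step finishes the proof. (Alternatively, one can bypass the verification that $\Lambda$ is a subhorseshoe by noting that $\mathcal{P}_{\varphi,\Lambda}$ was built as an intersection over all subhorseshoes \emph{including} $\widetilde{\Lambda}=\Lambda$, so the required genericity for $\Lambda$ is part of the membership $f\in\mathcal{P}_{\varphi,\Lambda}$.)

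\smallskip

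There is essentially no obstacle here: the corollary is a direct specialization of Proposition~\ref{lagrange1}. The only points deserving a word of care are the standard facts that Hausdorff dimension is monotone and that $\mathcal{L}_{\varphi,f}$ is bounded (so that the maximum of $L_{\varphi,f}$ exists and is realized at a finite $t$), and the trivial check that the ambient horseshoe $\Lambda$ is admissible input for Proposition~\ref{lagrange1}.
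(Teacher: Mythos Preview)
Your proof is correct and is precisely the argument the paper intends: the corollary is stated immediately after Proposition~\ref{lagrange1} with no separate proof, and your reduction to that proposition (applied with $\widetilde{\Lambda}=\Lambda$) together with the boundedness of $\mathcal{L}_{\varphi,f}$ is exactly the implicit reasoning.
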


\subsection{A technical proposition}\label{A technical proposition} Throughout this subsection we will suppose $HD(\Lambda)\\ <1$. Fix $f\in\mathcal{R}_{\varphi, \Lambda}$ and take $X \subset \Lambda$, compact and $\varphi$-invariant. Observe that the same proof of \cite[Proposition 2.9]{CMM16} lets us conclude that for every $0<\eta<1$ there exists $\delta>0$ and a complete subshift $\Sigma(\mathcal{B}_u)\subset \Sigma_{\mathcal{B}}\subset\mathcal{A}^{\mathbb{Z}}$ associated to a finite set $\mathcal{B}_u$, of finite sequences  such that  
$$\Sigma(\mathcal{B}_u)\subset\Sigma_{\max f|_X-\delta} \quad \textrm{and} \quad D_u(\Lambda(\Sigma(\mathcal{B}_u)))>(1-\eta)D_u(X),$$
where $\Lambda(\Sigma(\mathcal{B}_u))$ denotes the subhorseshoe of $\Lambda$ associated to $\mathcal{B}_u$. We point out here that $\Lambda(\Sigma(\mathcal{B}_u))$ does not need to be contained in $X$.

For fixing ideas and for future use we will remember some facts about the proof of \cite[Proposition 2.9]{CMM16}: the construction of $\mathcal{B}_u$ depends on three combinatorial lemmas (2.13-2.15). In our case, to prove those lemmas, we take $r_0$ large given by (\ref{du}), such that  
\begin{equation} \label{limit}
 \left|\frac{\log N_u(X,r)}{r}-D_u(X)\right|<\frac{\tau}{2}D_u(X)   
\end{equation}
for all $r\in\mathbb{N}$, $r\geq r_0$ where $\tau=\eta/100$.

The alphabet $\mathcal{B}_u$ is obtained from the set$$\widetilde{\mathcal{B}}_u=\{\beta=\beta_1\dots\beta_k : \beta_j\in\ \mathcal{C}_u(X,r_0), \,\,\, \forall \, 1\leq j\leq k \,\, \textrm{ and } \,\, \pi^u(X)\cap I^u(\beta)\neq\emptyset\}$$ 
where $k=8 N_u(X,r_0)^2\lceil2/\tau\rceil$. 

Defining the notion of \emph{good position} for positions $j \in \{1,...,k \}$ (see Definition \ref{good-position} below) is shown that
most positions of most words of $\widetilde{\mathcal{B}}_u$ are good and for that set of words, say $\mathcal{E}$, we can find natural numbers $1\leq s_1\leq \dots\leq s_{3N_0^2}\leq k$, ($N_0=N_u(X,r_0)$) with 
$$s_{m+1}-s_m\geq 2\lceil2/\tau\rceil \quad \mbox{for} \quad 1\leq m<3N_0^2$$ 
and words $\widehat{\beta}_{s_1}, \widehat{\beta}_{s_1+1}, \dots, \widehat{\beta}_{s_{3N_0^2}}, \widehat{\beta}_{s_{3N_0^2}+1}\in\ \mathcal{C}_u(X,r_0)$ such that the set $\mathcal{P}$ of words in $\mathcal{E}$ with $ s_m, s_{m}+1 $ good positions and $\beta_{s_m}=\widehat{\beta}_{s_m}, \beta_{s_m+1}=\widehat{\beta}_{s_m+1}$ for $ 1\leq m<3N_0^2$ has cardinality $\abs{\mathcal{P}}>N_0^{(1-2\tau)k}.$ 

Then is proved that there are $1\leq p_0<q_0\leq 3N_0^2$ such that $\widehat{\beta}_{s_{p_0}}=\widehat{\beta}_{s_{q_0}}$, $\widehat{\beta}_{s_{p_0+1}}=\widehat{\beta}_{s_{q_0+1}}$ and the cardinality of $\mathcal{B}_u=\pi_{p_0,q_0}(\mathcal{P})$ is
$$\abs{\mathcal{B}_u} > N_0^{(1-10\tau)(s_{q_0}-s_{p_0})},$$
where
$$\pi_{p_0,q_0}: \mathcal{P}\to \mathcal{C}_u(X,r_0)^{s_{q_0}- s_{p_0}} \quad \textrm{is the projection} \quad 
\pi_{p_0,q_0}(\beta_1\dots\beta_k)=(\beta_{s_{p_0+1}},\dots,\beta_{s_{q_0}})$$
obtained by cutting a word $\beta_1\dots\beta_k\in \mathcal{P}$ at the positions $s_{p_0}$ and $s_{q_0}$ and discarding the words $\beta_j$ with $j\leq s_{p_0}$ and $j>s_{q_0}$.

Using the conclusion on the cardinality of $\mathcal{B}_u$ is shown that $D_u(\Lambda(\Sigma(\mathcal{B}_u)))>\\(1-\eta)D_u(X)$ and using that $s_{p_0}$, $s_{p_0}+1$, $s_{q_0}$ and $s_{q_0}+1$ are good positions for words in $\mathcal{P}$ that $\Sigma(\mathcal{B}_u)\subset\Sigma_{\max f|_X-\delta}$.

Even more, the proof of that proposition gives us the next formula:
\begin{equation}\label{delta}
\delta=\min\{\delta^1, \delta^2, \\ \delta^3, \delta^4\},
\end{equation}
where if $\gamma_1= \widehat{\beta}_{s_{p_0+1}}= a_1\dots a_{\widehat{m}_1}$, $\beta_{s_{p_0}+2}\dots\beta_{s_{q_0}-1}=b_{1}\dots b_{\widehat{m}}$ and $\gamma_2=\widehat{\beta}_{s_{q_0}}=d_{1}\dots d_{\widehat{m}_2}$ then
\begin{itemize}
\item $\delta^1=c_3\cdot\min\limits_{\gamma_1 b_1\dots b_{\widehat{m}}\gamma_2\in\mathcal{B}_u}\,\,\, \min\limits_{1\leq j\leq \widehat{m}-1}\,\,\, |I^u(b_j\dots b_{\widehat{m}}\gamma_2)| $ 
\item $ \delta^2=c_3\cdot\min\limits_{\gamma_1 b_1\dots b_{\widehat{m}}\gamma_2\in\mathcal{B}_u}\,\,\, \min\limits_{1\leq j\leq \widehat{m}-1}\,\,\, |I^s((\gamma_1b_1\dots b_{j-1})^T)|$ 
\item $\delta^3=c_3\cdot\min\limits_{\gamma_1 b_1\dots b_{\widehat{m}}\gamma_2\in\mathcal{B}_u}\,\,\, \min\limits_{1\leq \ell\leq \widehat{m}_1-1}\,\,\, |I^s((\gamma_2 a_1\dots a_{\ell})^T)|$
\item $\delta^4=c_3\cdot\min\limits_{\gamma_1 b_1\dots b_{\widehat{m}}\gamma_2\in\mathcal{B}_u}\,\,\, \min\limits_{1\leq \ell\leq \widehat{m}_1-1}\,\,\, |I^u(d_{\ell-\widehat{m_1}-\widehat{m}+1}\dots d_{\widehat{m}_2}\gamma_1)|
$\end{itemize}
and $c_3$ is a positive constant that only depends on the function $f$ and $\varphi$. 

We will give a more precise estimate of the value of $\delta=\delta(\eta,X)$ and show some uniformity property of it. We also want to better describe the horseshoe $\Lambda ^u(X)=\Lambda(\Sigma(\mathcal{B}_u))$ obtained before. To do this, let us consider for $n\in \mathbb{N}$ the set $C(X,n)$ of admissible finite words $\theta$ of the form $\theta=(a_{-n},\dots,a_0,\dots, a_{n})$, such that the rectangle $R(a_{-n},\dots,a_0,\dots, a_{n};0)=\bigcap\limits_{j=-n}^{n}\varphi^{-j}(R_{a_j})$ has nonempty intersection with $X$. Also, given $\epsilon>0$ define $n(\epsilon)=\min\{n\in\mathbb{N}:\forall \theta\in C(\Lambda,n), \  \text{diam}(R(\theta;0))\leq \epsilon/2 \}$ where $\text{diam}(R(\theta;0))$ denotes the diameter of the set $R(\theta;0)$. 

\begin{proposition}\label{mcu2}

Given $\epsilon >0$ and $c_0 >0$ there exists a constant $\delta=\delta(\epsilon, c_0)>0$ such that if $X$ is a compact $\varphi$-invariant subset of $\Lambda$ that satisfies $D_u(X) \geq c_0$, then we can find some subhorseshoe $\Lambda ^u(X)$ of $\Lambda$ such that 
$$D_u(\Lambda ^u(X))>(1-\epsilon)D_u(X)\ \text{and}\ \Lambda ^u(X)\subset \Lambda_{\max f|_X-\delta}.$$ 
Furthermore, for every $x\in \Lambda ^u(X)$ the set
 \begin{eqnarray*}
X_\epsilon(x)=\{n\in \mathbb{Z}: \exists \theta\in C(X,n(\epsilon))\ \mbox{such that}\ \varphi^n(x)\in R(\theta;0)\}
\end{eqnarray*} 
is neither bounded below nor bounded above.
\end{proposition}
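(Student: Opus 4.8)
The plan is to deduce Proposition~\ref{mcu2} from the construction in the proof of Proposition~2.9 of \cite{CMM16} recalled above, run with the auxiliary parameter $\eta=\epsilon/2$ (so $\tau=\eta/100$), while (i) enforcing, beyond the requirement that \ref{limit} hold from the chosen scale $r_0=r_0(X)$ onward, also $r_0\ge c_1+(2n(\epsilon)+2)\log(1/\lambda_1)$ — harmless, since the construction works for every sufficiently large scale — and (ii) tracking how $\delta=\min\{\delta^1,\delta^2,\delta^3,\delta^4\}$ depends on $X$. For each compact $\varphi$-invariant $X$ with $D_u(X)\ge c_0$ this already yields a subhorseshoe $\Lambda^u(X)=\Lambda(\Sigma(\mathcal{B}_u))$ with $D_u(\Lambda^u(X))>(1-\eta)D_u(X)>(1-\epsilon)D_u(X)$ and $\Lambda^u(X)\subset\Lambda_{\max f|_X-\delta(X)}$ for some $\delta(X)>0$. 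Two things then remain: the statement on $X_\epsilon(x)$, and the uniformity of $\delta$.

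For the recurrence statement I would argue as follows. Because $r_0\ge c_1+(2n(\epsilon)+2)\log(1/\lambda_1)$, \ref{bdp2} forces every word $\alpha\in P^{(u)}_{r_0}\supset\mathcal{C}_u(X,r_0)$ to have more than $2n(\epsilon)+1$ letters, since $\abs{I^u(\alpha)}\le e^{-r_0}$ and $\abs{I^u(\alpha)}\ge e^{-c_1}\lambda_1^{\abs{\alpha}}$. Now the itinerary $\Pi(x)$ of any $x\in\Lambda^u(X)$ is a bi-infinite concatenation of $\mathcal{B}_u$-words, hence of words $\alpha_i\in\mathcal{C}_u(X,r_0)$; write $[p_i,p_{i+1})$ for the block of positions of $\Pi(x)$ occupied by $\alpha_i$, so $p_{i+1}-p_i=\abs{\alpha_i}>2n(\epsilon)+1$. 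Since $I^u(\alpha_i)\cap\pi^u(X)\ne\emptyset$ there is, after the $\varphi^{-p_i}$-translation, a point $w_i\in X$ whose forward itinerary agrees with $\Pi(x)$ on $[p_i,p_{i+1})$; then $m:=p_i+n(\epsilon)$ satisfies $[m-n(\epsilon),m+n(\epsilon)]\subset[p_i,p_{i+1})$, so $\varphi^{m}(w_i)\in X$ has the same central $(2n(\epsilon)+1)$-block as $\varphi^{m}(x)$, i.e. $\varphi^{m}(x)\in R(\theta;0)$ for some $\theta\in C(X,n(\epsilon))$ and $m\in X_\epsilon(x)$. As $\Pi(x)$ has blocks $\alpha_i$ with $p_i\to+\infty$ and with $p_i\to-\infty$, the set $X_\epsilon(x)$ is neither bounded above nor below.

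For the uniformity of $\delta$, the cheap half is automatic: the submultiplicativity Lemma gives $\log N_u(X,r)/r\ge D_u(X)-c_2\log\abs{\mathcal{A}}/r$ for every $X$, so the \emph{lower} inequality in \ref{limit} holds uniformly as soon as $r\ge\lceil 2c_2\log\abs{\mathcal{A}}/(\tau c_0)\rceil$. If only this half of \ref{limit} together with the cardinality bound \ref{expN} is used in the construction (enlarging $N_u(X,r)$ beyond $e^{(1+\tau/2)D_u(X)r}$ can only enlarge $D_u(\Lambda(\Sigma(\mathcal{B}_u)))$, which is harmless for us), then one may fix $r_0=r_0(\epsilon,c_0)$ once and for all and bound each $\delta^i$ directly from below using \ref{bdp1}--\ref{bdp2} and $N_0=N_u(X,r_0)\le e^{\alpha_1 r_0+\alpha_2}$ (so $k=8N_0^2\lceil2/\tau\rceil$ is bounded and each constituent interval has length $\ge e^{-r_0-c_4}$ for a constant $c_4=c_4(\varphi,\Lambda)$), obtaining $\delta\ge\delta(\epsilon,c_0)>0$. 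If instead the upper inequality in \ref{limit} is genuinely needed — e.g. in the good-position count — then $r_0$ cannot be taken uniform, and I would prove the uniformity by compactness: if $X_n$ are compact $\varphi$-invariant with $D_u(X_n)\ge c_0$ and $\delta(X_n)\to0$, pass to a Hausdorff limit $X_*$ and use that $D_u$ is upper semicontinuous on the compact subsets of the fixed horseshoe $\Lambda$ (at each scale $r$, $N_u(\,\cdot\,,r)$ changes by at most a bounded multiplicative factor under a small Hausdorff perturbation, the boundary effects being absorbed into $|\mathcal{A}|^{c_2}$), whence $D_u(X_*)\ge\lim_n D_u(X_n)\ge c_0$; the subhorseshoe $\Lambda':=\Lambda^u(X_*)$ built for $X_*$ then works, with margin $\delta_*/2$, as a valid $\Lambda^u(X_n)$ for all large $n$, since $\max f|_{X_n}\to\max f|_{X_*}$, $D_u(X_n)\to\lim_n D_u(X_n)\le D_u(X_*)$, and $C(X_*,n(\epsilon))\subset C(X_n,n(\epsilon))$ eventually (a finite word set, each word of which is witnessed by a point of $X_*$ approximable inside $X_n$), contradicting $\delta(X_n)\to0$.

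The step I expect to be the main obstacle is exactly this uniformity, i.e. deciding which of the two routes above is the correct one: this requires looking inside the proof of Proposition~2.9 of \cite{CMM16} (the combinatorial Lemmas and the notion of good position). If the two-sided estimate \ref{limit} is really used there, a uniform scale is impossible — a $\varphi$-invariant $X$ whose unstable complexity is inflated up to a large scale and only then settles at rate $c_0$ shows this — and the compactness argument is forced, its only non-formal ingredients being the upper semicontinuity of $D_u$ within $\Lambda$ and the handling of rectangle-boundary effects in $C(X_*,n(\epsilon))\subset C(X_n,n(\epsilon))$. The recurrence property of $X_\epsilon(x)$ and the explicit lower bound for $\delta$ in terms of $r_0$ are, by contrast, routine once these points are settled.
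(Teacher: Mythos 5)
Your recurrence argument for $X_\epsilon(x)$ is correct, and in fact slightly more detailed than the paper's, which only asserts the property after observing that $\mathcal{B}_u$-words are long (built from at least $2\lceil 200/\eta\rceil$ blocks in $\mathcal{C}_u(X,r_0)$). Your way of getting block length $>2n(\epsilon)+1$ by imposing $r_0\ge c_1+(2n(\epsilon)+2)\log(1/\lambda_1)$ and then using $\varphi$-invariance of $X$ to place the central window inside a single $\mathcal{C}_u(X,r_0)$-block is sound.

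The gap is exactly where you predicted it would be, but neither of your two routes is the paper's, and both have problems. Route A is wrong: the good-position machinery of \cite{CMM16} genuinely needs a comparison between the counting function at scale $r_0$ and at the much coarser scale $k(r_0-c_1)$ (this is what makes enough positions good and $\lvert\mathcal{B}_u\rvert$ large), so one cannot simply discard the upper half of \ref{limit}; your parenthetical ``enlarging $N_u(X,r)$ only helps'' is not justified, since an inflated $N_u(X,r_0)$ with no corresponding inflation at scale $k(r_0-c_1)$ makes most positions bad. Route B (Hausdorff compactness) is a different and more delicate argument than the paper's: the asserted inclusion $C(X_*,n(\epsilon))\subset C(X_n,n(\epsilon))$ for large $n$ can fail when the witnessing point of $X_*$ lies on the boundary of the rectangle $R(\theta;0)$ (Hausdorff closeness then only gives a point of $X_n$ in a neighboring rectangle), and the upper semicontinuity of $D_u$ on compact invariant subsets of $\Lambda$ is plausible but would need a careful proof; moreover this route is non-constructive, whereas the proposition is used later with explicit dependence on $\epsilon$ and $c_0$.

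What the paper actually does is a third route: it observes that the two-sided hypothesis \ref{limit} can be replaced by the \emph{single} inter-scale inequality
\[
\frac{\log N_u(X,r_0)}{r_0} < \Bigl(1+\tfrac{\tau}{2}\Bigr)\frac{\log N_u(X,k(r_0-c_1))}{k(r_0-c_1)}, \qquad k=8N_u(X,r_0)^2\lceil 2/\tau\rceil,
\]
together with the lower bound $r_0>\lceil 4(c_1+1)\log\lvert\mathcal{A}\rvert^{c_2}/(c_0\tau^2)\rceil$ (which supplies what your ``cheap half'' supplies), and the Palis--Takens-type bound \ref{alfabeta}--\ref{Palistimates} for the dimension of the Cantor set $K^u(\Sigma(\mathcal{B}_u))$. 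It then constructs the recursive sequence $p_{n+1}=8N_u(X,p_n)^2\lceil 2/\tau\rceil(p_n-c_1)$ and shows by iterating the negation of the inter-scale inequality, against the a priori lower bound $(1-\tau/2)c_0\le\log N_u(X,p_M)/p_M$, that some $p_{s_0}$ with $s_0<(1+2/\tau)\log\bigl(4(\alpha_1+\alpha_2+1)/\eta\bigr)$ works. This bounds $r_0=p_{s_0}$ by an explicit tower function of $\epsilon, c_0, \varphi, \Lambda$, and the formula for $\delta$ (via \ref{bdp2} and \eqref{2.17}) then gives the explicit uniform lower bound. So a ``uniform scale'' is in fact achievable — not a single fixed $r_0$, but an $r_0$ chosen from a uniformly bounded range — and the compactness detour is unnecessary.
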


\begin{proof}
Take $X \subset \Lambda$, compact and $\varphi$-invariant as in the statement of the proposition. It is clear from the construction given of $\mathcal{B}_u$ and from the fact that
$$s_{q_0}-s_{p_0}\geq 2\lceil2/\tau\rceil (q_0 -p_0) \geq 2\lceil2/\tau\rceil=2\lceil200/\eta\rceil$$
that for $\eta=\eta(\epsilon) < \epsilon$ small enough and $x \in \Lambda ^u(X)=\Lambda(\Sigma(\mathcal{B}_u))$, the set $X_\epsilon(x)$ is neither bounded below nor bounded above. Also, because $\Lambda ^u(X)\subset \Lambda_{\max f|_X-\delta}$, the proposition will be proved if we can choose $\delta$ only depending on $\eta$ and $c_0$.

Without loss of generality, consider $0<\eta < \min \{c_0, 5000/ (c_2\log \abs{\mathcal{A}}), 3\lambda_1, \kappa \},$
where $\kappa >0$ is such that the maps $x\mapsto e^{e^x}-8e^{2\alpha_1x+2\alpha_2}\cdot x^2$  and $x\mapsto e^{e^x}-8\log x\cdot e^{2\alpha_1x+2\alpha_2} \cdot x(\alpha_1 x+\alpha_2) $  are positive if $x > 1/\kappa^2$. Here $\lambda_1$ is given in \ref{bdp2}, $\alpha_1,\alpha_2$ in \ref{expN} and $c_2$ in Lemma \ref{c_2}.

The crucial observation here is that in the proof sketched above (without the dimension estimate) we can replace the conditions on $r_0$ (and $k$), given by Equation (\ref{limit}), by the assumption that $r_0>\lceil\frac{4(c_1+1)\log|\mathcal{A}|^{c_2}}{c_0\tau^2}\rceil$ and $k=8  N_u(X,r_0)^2\lceil2/\tau\rceil$ satisfy the inequality
\begin{equation}\label{condition}
\frac{\log N_u(X,r_0)}{r_0} < (1+\frac{\tau}{2})\frac{\log N_u(X,k(r_0-c_1))}{k(r_0-c_1)},
\end{equation}
where $c_1$ comes from the bounded distortion property as in Equation (\ref{bdp1}). This is because, if we multiply this inequality by $(1-\tau)r_0k$, we have by our choice of $r_0$ that 
\begin{eqnarray*}\log N_u(X,r_0)^{(1-\tau)k} &<& (1-\tau)(1+\frac{\tau}{2})\frac{r_0}{r_0-c_1}\log N_u(X,k(r_0-c_1))\\ &=&(1-\frac{\tau}{2}-\frac{\tau^2}{2})(1+ \frac{c_1}{r_0-c_1})\log N_u(X,k(r_0-c_1))\\ &<& (1-\frac{\tau}{2})(1+ \frac{c_1}{r_0-c_1})\log N_u(X,k(r_0-c_1)) \\&<&  (1-\frac{\tau}{2})(1+\frac{\tau^2}{1-\tau^2})\log N_u(X,k(r_0-c_1))\\ &<&  (1-\frac{\tau}{2}) (1+\frac{\tau}{2})\log N_u(X,k(r_0-c_1)) \\ &=&\log N_u(X,k(r_0-c_1))^{1-\frac{\tau^2}{4}}
\end{eqnarray*}
also, given any $r\geq r_0$ we have by definition of $D_u(X)$
\begin{equation}\label{dime}
(1-\frac{\tau}{2})D_u(X)\leq D_u(X)-\frac{\tau}{2}c_0\leq D_u(X)-\frac{\log\abs{\mathcal{A}}^{c_2}}{r}\leq \frac{\log N_u(X,r)}{r} 
\end{equation}
which implies for $r=k(r_0-c_1)$ that
$$\log2<\log\abs{\mathcal{A}}^{c_2}< \frac{\tau^2}{4}r_0c_0 \leq\frac{\tau^2}{4}(1-\frac{\tau}{2})k(r_0-c_1)D_u(X)\leq\frac{\tau^2}{4}\log N_u(X,k(r_0-c_1)).$$
From the previous inequalities, we conclude that $2N_u(X,r_0)^{(1-\tau)k}<N_u(X,k(r_0-c_1))$ which is precisely the necessary condition to obtain \cite[Equation 2.4, Lemma 2.13]{CMM16} and the claims in other parts of the proof of the lemmas that use the assumptions that $r_0$ and $k$ are large, are satisfied provided $r_0> \lceil\frac{4(c_1+1)\log|\mathcal{A}|^{c_2}}{c_0\tau^2}\rceil$.

On the other hand, given any regular Cantor set $(K,\psi)$ with Markov partition $\mathcal{P}=\{I_1, \dots, I_k\}$ if we define inductively $\mathcal{R}_1=\mathcal{P}$ and for $n\geq 2$, $\mathcal{R}_n$ as the set of connected components of $\psi^{-1}(J)$, $J\in \mathcal{R}_{n-1}.$ And also, for each $R\in \mathcal{R}_n$ we denote by
$$ \lambda_{n, R}=\inf \abs{(\psi^n)'|_R} \ \ \text{and}\ \ \Lambda_{n, R}=\sup \abs{(\psi^n)'|_R},$$ 
the bounded distortion property shows the existence of some $a=a(K)\geq1$, such that $\Lambda_{n,R}\leq a . \lambda_{n,R}$, for all $n\geq 1$. Even more, it is well known that for any such $K$, $D(K)=HD(K)$ where $D(K)$ denotes the limit capacity of $K$ (cf. \cite[Chapter 4]{PT93}). Indeed, it follows from the proof of this result that for the sequences $\{ \alpha_n\}_{n\in \mathbb{N}}$ and $\{ \beta_n\}_{n\in \mathbb{N}}$ given by
\begin{equation}\label{beta_n}
  \sum \limits_{R\in \mathcal{R}_n} \left (\frac{1}{\Lambda_{n, R}}\right )^{\alpha_n}=1=\sum \limits_{R\in \mathcal{R}_n} \left (\frac{1}{\lambda_{n, R}}\right )^{\beta_n},   
\end{equation}
when $\psi$ is a full Markov map i.e., $\psi(K\cap I_j)=K$ for $1\leq j\leq k$, one has
\begin{equation}\label{alfabeta}
 \alpha_n \leq HD(K)=D(K)\leq \beta_n   
\end{equation}
and if $n\geq \log a/\log \lambda$, where $\lambda=\lambda(K)=\inf\abs{\psi'}>1$
\begin{equation}\label{Palistimates}
    \beta_n-\alpha_n\leq \frac{\log a\cdot HD(K)}{n\log \lambda-\log a}.
\end{equation}

Now, consider the Cantor set $K^u(\Sigma(\mathcal{B}_u))$ consisting of points of $K^u$ whose trajectory under $\psi_u$ follows an itinerary obtained from the concatenation of words in the alphabet $\mathcal{B}_u$. This Cantor set is $C^{1+\alpha}$-dynamically defined associated to certain iterates of $\psi_u$ on the intervals $I^u(\beta)$ with $\beta \in \mathcal{B}_u$. 
If $r(\eta,\Lambda)\in \mathbb{N}$ is such that given $r_0 \geq r(\eta,\Lambda)$ one has for any complete subshift associated to a finite alphabet $\mathcal{B}_u=\mathcal{B}_u(r_0)$ of finite words as before that $\lambda=\lambda(K^u(\Sigma(\mathcal{B}_u)))$ is big  (we can take $a=a(K^u(\Sigma(\mathcal{B}_u)))=a(K^u(\Lambda))$), then by (\ref{alfabeta}) and (\ref{Palistimates}) 

$$\beta_1-\alpha_1\leq \frac{\tau}{2} HD(K^u(\Sigma(\mathcal{B}_u)))\leq \frac{\tau}{2}\beta_1.$$
Using this, (\ref{beta_n}) and (\ref{alfabeta}) we obtain 
$$HD(K^u(\Sigma(\mathcal{B}_u)))\ge \alpha_1 \geq \left (1-\dfrac{\tau}{2}\right)\beta_1 \geq \left (1-\dfrac{\tau}{2}\right) \dfrac{| \mathcal{B}_u|}{- \log (\min\limits_{\alpha\in \mathcal{B}_u} |I^u(\alpha)|)}$$
which is the equation used in \cite{CMM16} (together with (\ref{dime})) to obtain the dimension estimate
$$D_u(\Lambda(\Sigma(\mathcal{B}_u)))>(1-\eta)D_u(X).$$

Following the observations described above, we will try to find $r_0$ large enough, satisfying (\ref{condition}). For this, define the sequence $\{p_n\}$ as follows: $p_0=\max \{ \lceil\frac{4(c_1+1)\log|\mathcal{A}|^{c_2}}{c_0\tau^2}\rceil,\\ r(\eta,\Lambda) \}$ and for $n\geq0$ put 
$$p_{n+1}=8 N_u(X,p_n)^2\lceil2/\tau\rceil(p_n -c_1).$$
We claim that, for some integer $0\leq s_0<(1+\frac{2}{\tau})\log \frac{4(\alpha_1 + \alpha_2+1)}{\eta}$ one has 
 $$\frac{\log N_u(X,p_{s_0})}{p_{s_0}} < (1+\frac{\tau}{2})\frac{\log N_u(X,p_{s_0+1})}{p_{s_0+1}}= (1+\frac{\tau}{2})\frac{\log N_u(X,k(p_{s_0}-c_1))}{k(p_{s_0}-c_1)},$$
with 
$k=8 N_u(X,p_{s_0})^2\lceil2/\tau\rceil$. 

Indeed, if it is not the case, then for 
$0\leq n<(1+\frac{2}{\tau})\log\frac{4(\alpha_1 + \alpha_2+1)}{\eta}$, we have 

$$\frac{\log N_u(X,p_{n+1})}{p_{n+1}}\leq (1+\frac{\tau}{2})^{-1}\frac{N_u(X,p_n)}{p_n} $$ 
and then, for $M=\lceil  (1+\frac{2}{\tau})\log\frac{4(\alpha_1 + \alpha_2+1)}{\eta}  \rceil$ we would have 
$$\frac{\log N_u(X,p_M)}{p_M} \leq (1+\frac{\tau}{2})^{-M} \cdot\frac{\log N_u(X,p_0)}{p_0} < \frac{\eta}{4(\alpha_1 + \alpha_2+1)}\frac{\log N_u(X,p_0)}{p_0}$$ 
because 
$$(1+\frac{\tau}{2})^{-M} \leq ((1+\frac{\tau}{2})^{-(1+\frac{2}{\tau})})^{\log\frac{4(\alpha_1+\alpha_2+1)}{\eta}} < e^{-\log\frac{4(\alpha_1+\alpha_2+1)}{\eta}}=\frac{\eta}{4(\alpha_1+\alpha_2+1)}.$$
And so, by (\ref{expN})
$$\frac{\log N_u(X,p_M)}{p_M} \leq \frac{\eta}{4(\alpha_1 + \alpha_2)}\frac{\log N_u(X,p_0)}{p_0} \leq \frac{\eta}{4(\alpha_1 + \alpha_2)}\frac{\alpha_1.p_0 + \alpha_2}{p_0}<\frac{\eta}{2}.$$ 
But this is a contradiction because by (\ref{dime})
$$\frac{\eta}{2} < (1-\frac{\tau}{2})c_0 \leq (1-\frac{\tau}{2})D_u(X) \leq \frac{\log N_u(X,p_M)}{p_M}.$$

Therefore, by taking $r_0=p_{s_0}$ and $k=8 N_u(X,r_0)^2\lceil2/\tau\rceil$, the argument for the construction of $\mathcal{B}_u$ works and then, because of (\ref{bdp2}), (\ref{words}) and (\ref{delta}), we have
\begin{equation} \label{2.17}
 \delta \geq c_3e^{-c_1}\lambda_1^{\widehat{m}_1+\widehat{m}_2+\widehat{m}} \geq c_3e^{-c_1}\lambda_1^{k\cdot \max \{ \abs{\alpha}:\alpha \in \mathcal{C}_u(X,r_0)   \}} \geq c_3e^{-c_1}\lambda_1^{k\cdot(\alpha_1r_0+\alpha_2)}.  
\end{equation}
We will now give an explicit positive lower bound for $\delta$ in terms of $\eta$. In order to do that, we define recursively, for each integer $n \geq 0$ and $x \in \mathbb{R}$, the function $\mathcal{T}(n,x)$ by $\mathcal{T}(x,0)=x$, $\mathcal{T}(x,n+1)=e^{\mathcal{T}(x,n)}$. By (\ref{expN}), we have for $n \geq0$
$$p_{n+1}=8 N_u(X,p_n)^2\lceil2/\tau\rceil(p_n -c_1)<  8e^{2\alpha_1p_n+2\alpha_2}\cdot p_n^2 < e^{e^{p_n}}, $$
since $p_n \geq p_0 > \lceil2/\tau^2\rceil$ and $p_n > 1/\kappa^2$.  Therefore $r_0=p_{s_0} < \mathcal{T}(p_0,2s_0)$ and
\begin{eqnarray*}\label{lala}
\log \lambda_1^{-1}\cdot k(\alpha_1 r_0+\alpha_2) &=& 8\log \lambda_1^{-1}\cdot N_u(X,r_0)^2\lceil2/\tau\rceil (\alpha_1 r_0+\alpha_2)\\ &<&   8\log r_0\cdot e^{2\alpha_1r_0+2\alpha_2} \cdot r_0(\alpha_1 r_0+\alpha_2) < e^{e^{r_0}} 
\end{eqnarray*} 
so, by (\ref{2.17})
\begin{equation} \label{2.19}
  \delta \geq c_3e^{-c_1}e^{\log \lambda_1 \cdot k(\alpha_1r_0+\alpha_2)}> c_3e^{-c_1}e^{-e^{e^{r_0}}} > \frac{c_3e^{-c_1}}{\mathcal{T}(p_0,2s_0+3)}.
\end{equation}
As $p_0=\max \{ \lceil\frac{40000(c_1+1)\log|\mathcal{A}|^{c_2}}{c_0\eta^2}\rceil, r(\eta,\Lambda) \}$ and $s_0<(1+\frac{2}{\tau})\log \frac{4(\alpha_1+\alpha_2+1)}{\eta}= (1+\frac{200}{\eta})\log \frac{4(\alpha_1+\alpha_2+1)}{\eta}$, we have by (\ref{2.19})
$$\delta > \frac{c_3e^{-c_1}}{\mathcal{T}(p_0,2s_0+3)} = \frac{c_3e^{-c_1}}{\mathcal{T}(\max \{ \lceil\frac{40000(c_1+1)\log|\mathcal{A}|^{c_2}}{c_0\eta^2}\rceil, r(\eta,\Lambda) \},\lceil \frac{201}{\eta}\log \frac{4(\alpha_1+\alpha_2+1)}{\eta}\rceil)},$$
that finishes the proof of the proposition.
\end{proof}
Now, if we suppose that $D_s(X)\geq c_0$, given $\epsilon>0$ we can construct, as before, some complete subshift $\Sigma(\mathcal{B}_s)$ such that $\Lambda(\Sigma(\mathcal{B}_s))$ has similar properties as $\Lambda^u(X)=\Lambda(\Sigma(\mathcal{B}_u))$. Then, we immediately have
\begin{corollary}\label{uniform}
Given $\epsilon >0$ and $c_0 >0$ there exists a constant $\delta=\delta(\epsilon, c_0)>0$ such that if $X$ is a compact $\varphi$-invariant subset of $\Lambda$ such that the limit capacities $D_u(X)$ and $D_s(X)$ satisfy both $D_u(X), D_s(X) \geq c_0$, then there are subhorseshoes $\Lambda ^s(X)$ and  $\Lambda ^u(X)$ of $\Lambda$ such that 
$$D_u(\Lambda ^u(X))>(1-\epsilon)D_u(X), \quad D_s(\Lambda ^s(X))>(1-\epsilon)D_s(X)$$ 
and 
  $$\Lambda ^u(X)\cup \Lambda ^s(X) \subset \Lambda_{\max f|_X-\delta}.$$ 
Furthermore, for every $x\in \Lambda ^u(X) \cup \Lambda ^s(X)$ the set
\begin{eqnarray*}
X_\epsilon(x)=\{n\in \mathbb{Z}: \exists \theta\in C(X,n(\epsilon))\ \mbox{such that}\ \varphi^n(x)\in R(\theta;0)\}
\end{eqnarray*} 
is neither bounded below nor bounded above.
\end{corollary}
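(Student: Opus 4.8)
The plan is to deduce the corollary directly from Proposition~\ref{mcu2} together with the evident symmetry between the stable and unstable pictures. First I would observe that every ingredient used in the proof of Proposition~\ref{mcu2} has a stable counterpart: the bounded distortion estimates \ref{bdp1} and \ref{bdp2} are stated symmetrically for the intervals $I^s$, the sub-multiplicativity of $N_s(X,\cdot)$ is provided by the Lemma in Section~\ref{pre} (proved there ``by symmetry''), the exponential bound \ref{expN} holds verbatim for $N_s$, and the objects $\psi_u$, $\mathcal{C}_u$, $P^{(u)}_r$, $r^{(u)}$ that enter the construction of $\mathcal{B}_u$ have exact analogues $\psi_s$, $\mathcal{C}_s$, $P^{(s)}_r$, $r^{(s)}$. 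Likewise $n(\epsilon)$, being defined through the sets $C(\Lambda,n)$ and the diameters of the rectangles $R(\theta;0)$, does not favour either direction. Hence, running the argument of Proposition~\ref{mcu2} with the letter $s$ in place of $u$ throughout (and with the capacity estimates \ref{beta_n}--\ref{Palistimates} applied to the Cantor set $K^s(\Sigma(\mathcal{B}_s))$) produces, for the same parameter $\eta=\eta(\epsilon)$, a finite alphabet $\mathcal{B}_s$ of admissible words and a subhorseshoe $\Lambda^s(X):=\Lambda(\Sigma(\mathcal{B}_s))$ with $D_s(\Lambda^s(X))>(1-\epsilon)D_s(X)$ and $\Lambda^s(X)\subset \Lambda_{\max f|_X-\delta_s}$, where $\delta_s=\delta_s(\epsilon,c_0)>0$ is given by the same explicit formula as for $\delta$ in the proof of Proposition~\ref{mcu2} (with the obvious relabelling of the intervals in the bounds for $\delta^1,\dots,\delta^4$), and so depends only on $\epsilon$ and $c_0$ (and the fixed data $\varphi,\Lambda,f$). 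The same estimate $s_{q_0}-s_{p_0}\geq 2\lceil 200/\eta\rceil$ shows that $X_\epsilon(x)$ is unbounded below and above for every $x\in\Lambda^s(X)$.

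Then I would set $\delta:=\min\{\delta_u(\epsilon,c_0),\delta_s(\epsilon,c_0)\}>0$, writing $\delta_u$ for the constant furnished by Proposition~\ref{mcu2}. Since $D_u(X)\geq c_0$, Proposition~\ref{mcu2} yields $\Lambda^u(X)$ with $D_u(\Lambda^u(X))>(1-\epsilon)D_u(X)$, $\Lambda^u(X)\subset\Lambda_{\max f|_X-\delta_u}$, and $X_\epsilon(x)$ unbounded in both directions for $x\in\Lambda^u(X)$; the previous paragraph yields the corresponding $\Lambda^s(X)$. Because $\delta\leq\delta_u$ and $\delta\leq\delta_s$, and because $t\mapsto\Lambda_t$ is nondecreasing for inclusion (if $t'\leq t$ then $\Lambda_{t'}\subset\Lambda_t$, directly from the definition of $\Lambda_t$), one gets $\Lambda^u(X)\subset\Lambda_{\max f|_X-\delta_u}\subset\Lambda_{\max f|_X-\delta}$ and similarly $\Lambda^s(X)\subset\Lambda_{\max f|_X-\delta}$, whence $\Lambda^u(X)\cup\Lambda^s(X)\subset\Lambda_{\max f|_X-\delta}$. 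All the remaining assertions are already contained in the two paragraphs above, completing the proof.

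There is essentially no difficulty here beyond bookkeeping; the one point I would make explicit is that the stable analogue of Proposition~\ref{mcu2} really is a formal consequence of the very same proof, i.e. that nothing in the construction of $\mathcal{B}_u$ (the combinatorial lemmas 2.13--2.15 of \cite{CMM16}, the recursive choice of scales $p_n$, the good-position argument, and the capacity estimates \ref{beta_n}--\ref{Palistimates}) secretly relies on an asymmetry between the stable and unstable directions. Once this is granted, the corollary is immediate upon taking the minimum of the two constants $\delta_u$ and $\delta_s$.
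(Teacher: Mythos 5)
Your proposal is correct and matches the paper's own reasoning: the paper also deduces Corollary~\ref{uniform} by observing that the argument of Proposition~\ref{mcu2} applies verbatim with stable objects in place of unstable ones, producing $\Lambda^s(X)$ with the analogous properties, and then passing to the minimum of the two constants $\delta$. Your write-up is merely more explicit about the symmetry of the ingredients and about the monotonicity $t'\leq t\implies\Lambda_{t'}\subset\Lambda_t$, both of which the paper leaves implicit.
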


\subsection{First accumulation point of the Lagrange spectrum}
In this subsection, we show the existence of the first accumulation point of the Lagrange spectrum and show that it is exactly at that point where the map $L_{\varphi, f}$ begins to be positive. In what follows, we will use the following result from \cite{GCD}:

\begin{lemma}\label{L1}
Given $\varphi\in \mathcal{U},$ any subhorseshoe $\tilde{\Lambda}\subset \Lambda$, $f\in C^1(S,\mathbb{R})$ and $t\in\mathbb{R}$, one has
$$\ell_{\varphi,f}(\tilde{\Lambda})\cap (-\infty,t)= \bigcup \limits_{s<t} \ell_{\varphi,f}(\tilde{\Lambda}_s).$$
In particular
$$L_{\varphi, f}(t)=\sup \limits_{s <t} HD(\ell_{\varphi,f}(\Lambda_s))=\lim \limits_{s \to\ t^-} HD(\ell_{\varphi,f}(\Lambda_s)).$$
\end{lemma}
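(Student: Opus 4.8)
The plan is to first establish the set identity $\ell_{\varphi,f}(\tilde{\Lambda})\cap(-\infty,t)=\bigcup_{s<t}\ell_{\varphi,f}(\tilde{\Lambda}_s)$ and then read off the ``in particular'' clause from countable stability of Hausdorff dimension. I would work entirely in the symbolic model: the subhorseshoe $\tilde{\Lambda}$ is conjugated by $\Pi$ to a subshift of finite type $\widetilde{\Sigma}=\Pi(\tilde{\Lambda})\subset\Sigma_{\mathcal B}$, I transfer $f$ to $\widetilde{\Sigma}$ (so that it is \emph{uniformly} continuous there, $\widetilde{\Sigma}$ being compact), and I set $\widetilde{\Sigma}_s=\Pi(\tilde{\Lambda}_s)=\{\theta\in\widetilde{\Sigma}:\sup_{n\in\mathbb Z}f(\sigma^n\theta)\le s\}$. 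The inclusion $\supseteq$ is then immediate: if $s<t$ and $y\in\tilde{\Lambda}_s$ then $\ell_{\varphi,f}(y)=\limsup_{n\to\infty}f(\varphi^n y)\le\sup_{n\in\mathbb Z}f(\varphi^n y)\le s<t$ and $y\in\tilde{\Lambda}$, so $\ell_{\varphi,f}(y)\in\ell_{\varphi,f}(\tilde{\Lambda})\cap(-\infty,t)$.

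For the inclusion $\subseteq$, given $x\in\tilde{\Lambda}$ with $\lambda:=\ell_{\varphi,f}(x)<t$ (finite, since $f$ is bounded on $S$), I want to produce $y\in\tilde{\Lambda}$ with $m_{\varphi,f}(y)<t$ and $\ell_{\varphi,f}(y)=\lambda$. Put $\theta=\Pi(x)$ and fix $\epsilon>0$ with $\lambda+2\epsilon<t$. Since $\limsup_{n\to\infty}f(\sigma^n\theta)=\lambda$, there is $N\ge 0$ with $f(\sigma^n\theta)<\lambda+\epsilon$ for all $n\ge N$; set $\theta''=\sigma^N\theta$, so $f(\sigma^m\theta'')<\lambda+\epsilon$ for every $m\ge0$ while $\ell_{\varphi,f}(\theta'')=\lambda$. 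The only obstruction is the \emph{backward} orbit of $\theta''$, where $f$ is uncontrolled. To cure it I would pick any $\eta\in\omega(\theta)$ (nonempty by compactness); since $\omega(\theta)$ is $\sigma$-invariant and each of its points is a limit $\sigma^{m_k}\theta\to\eta$ with $m_k\to\infty$, one has $f\le\lambda$ on $\omega(\theta)$, hence $\sup_{n\in\mathbb Z}f(\sigma^n\eta)\le\lambda$. The crucial point is $\omega(\theta'')=\omega(\theta)\ni\eta$, so the \emph{forward} orbit of $\theta''$ returns arbitrarily close to $\eta$. Using uniform continuity, choose $Q$ so large that two elements of $\widetilde{\Sigma}$ agreeing on $[-Q,Q]$ have $f$-values within $\epsilon$, then choose $P\ge0$ with $\sigma^P\theta''$ agreeing with $\eta$ on $[-Q,Q]$, and define $\theta'$ by $\theta'_n=\eta_n$ for $n\le0$ and $\theta'_n=(\sigma^P\theta'')_n$ for $n\ge0$. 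Because $\eta$ and $\sigma^P\theta''$ already coincide on $[-Q,Q]\ni 0$, this splice is seamless: $\theta'\in\widetilde{\Sigma}$, it agrees with $\eta$ on $(-\infty,Q]$ and with $\sigma^P\theta''$ on $[-Q,\infty)$. Consequently, for $n\le0$ the shift $\sigma^n\theta'$ agrees with $\sigma^n\eta$ on $[-Q,Q]$, so $f(\sigma^n\theta')<\lambda+\epsilon$; for $n\ge0$ it agrees with $\sigma^{n+P}\theta''$ on $[-Q,\infty)$, so $f(\sigma^n\theta')<\lambda+2\epsilon$; hence $m_{\varphi,f}(\theta')\le\lambda+2\epsilon=:s<t$. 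Moreover for $n\ge0$ the agreement of $\sigma^n\theta'$ with $\sigma^{n+P}\theta''$ spreads to blocks of radius $\ge Q+n$, so $\ell_{\varphi,f}(\theta')=\limsup_n f(\sigma^n\theta')=\limsup_m f(\sigma^m\theta'')=\lambda$. Taking $y=\Pi^{-1}(\theta')\in\tilde{\Lambda}_s$ with $s<t$ yields $\lambda=\ell_{\varphi,f}(y)\in\bigcup_{s<t}\ell_{\varphi,f}(\tilde{\Lambda}_s)$, which proves $\subseteq$.

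For the ``in particular'' clause I would apply the identity with $\tilde{\Lambda}=\Lambda$ to get $\mathcal L_{\varphi,f}\cap(-\infty,t)=\bigcup_{s<t}\ell_{\varphi,f}(\Lambda_s)$; since $s\mapsto\Lambda_s$ is nondecreasing, so is $s\mapsto\ell_{\varphi,f}(\Lambda_s)$, and picking any sequence $s_k\uparrow t$ and invoking countable stability of Hausdorff dimension gives $L_{\varphi,f}(t)=HD\big(\bigcup_k\ell_{\varphi,f}(\Lambda_{s_k})\big)=\sup_k HD(\ell_{\varphi,f}(\Lambda_{s_k}))=\sup_{s<t}HD(\ell_{\varphi,f}(\Lambda_s))$, and monotonicity turns the supremum into the left limit $\lim_{s\to t^-}HD(\ell_{\varphi,f}(\Lambda_s))$.

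I expect the main obstacle to be precisely the control of the backward orbit in the $\subseteq$ direction. The naive attempt --- splicing the future of a shift of $x$ onto the past of a ``good'' point via a connecting word supplied by transitivity --- fails because that connecting word may carry large values of $f$. The device that resolves this, and which I would make the heart of the write-up, is that one never needs an arbitrary connecting word: the forward orbit of $\sigma^N\theta$ itself revisits every neighborhood of the $\omega$-limit point $\eta$, so the gluing can be performed inside a finite block on which the two orbit segments literally coincide, which keeps $m_{\varphi,f}$ under control while preserving the Lagrange value exactly equal to $\lambda$.
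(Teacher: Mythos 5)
Your proof is correct, and it supplies a complete argument for a statement that the paper only cites (from \cite{GCD}) without reproducing its proof. There is therefore no in-paper argument to compare against, but your route is clean and self-contained: the $\supseteq$ inclusion is immediate from the definitions, and the essential difficulty, as you correctly identify, is the backward orbit in the $\subseteq$ direction. Your device of gluing the past of an $\omega$-limit point $\eta$ to the future of $\sigma^P\theta''$ \emph{along a long block where the two sequences literally coincide} neatly sidesteps the problem with a transitivity-supplied connecting word carrying uncontrolled $f$-values; and since $\omega(\theta)$ is forward-recurrent for $\theta''=\sigma^N\theta$, such a coincidence block exists.

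One point deserves to be made explicit in a final write-up. When you assert that the splice $\theta'$ (past from $\eta$, future from $\sigma^P\theta''$, overlapping on $[-Q,Q]$) lies in $\widetilde{\Sigma}=\Pi(\tilde{\Lambda})$, you are implicitly using that $\widetilde{\Sigma}$ is a subshift of finite type, and that $Q$ exceeds its window size. This is true: a subhorseshoe is by definition $M(X)$ for a finite collection $X$ of finite admissible words, and its symbolic model is defined by a local condition on centered blocks of radius $\max_{\theta\in X} n(\theta)$. So the splice is legitimate once $Q$ is taken at least as large as this radius \emph{and} large enough for the uniform-continuity estimate on $f$; both are satisfied by a single sufficiently large $Q$. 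Stating this explicitly would close the one small gap in the argument. The remaining estimates — $f(\sigma^n\theta')<\lambda+\epsilon$ for $n\le 0$ via agreement with $\sigma^n\eta$ on $[-Q,Q]$ and $f\le\lambda$ on $\omega(\theta)$, $f(\sigma^n\theta')<\lambda+2\epsilon$ for $n\ge 0$ via agreement with $\sigma^{n+P}\theta''$, and the preservation of the Lagrange value $\lambda$ from the fact that the agreement radius of $\sigma^n\theta'$ with $\sigma^{n+P}\theta''$ grows like $Q+n$ — are all correct, as is the derivation of the ``in particular'' clause from countable stability of Hausdorff dimension and monotonicity of $s\mapsto\ell_{\varphi,f}(\Lambda_s)$.
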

From this we get 
\begin{equation}\label{L}
    L_{\varphi, f}(t)=\sup \limits_{s <t} HD(\ell_{\varphi,f}(\Lambda_s)) \leq HD(\ell_{\varphi,f}(\Lambda_{t})) \leq HD(f(\Lambda_{t}))\leq HD(\Lambda_{t}).
\end{equation}

 \begin{proposition}\label{first}
    Take $\varphi \in \mathcal{U}^*$ and $f\in \mathcal{P}_{\varphi,f}$. Then 
     $$\mathcal{L}^{'} _{\varphi, f}=\{x: x\ \textrm{is an accumulation point of}\ \mathcal{L}_{\varphi, f}\}\neq \emptyset$$ and $c_{\varphi,f}=\min L^{'} _{\varphi, f}$.
 \end{proposition}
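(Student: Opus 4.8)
The plan is to check first that $\mathcal{L}'_{\varphi,f}$ has a minimum, and then to prove the two inequalities $c_{\varphi,f}\ge m$ and $c_{\varphi,f}\le m$, where $m:=\min\mathcal{L}'_{\varphi,f}$. For the existence of the minimum: since $\Lambda$ is a nontrivial (mixing) horseshoe, $HD(\Lambda)>0$, so by Corollary \ref{max} $HD(\mathcal{L}_{\varphi,f})=\min\{1,HD(\Lambda)\}>0$; hence $\mathcal{L}_{\varphi,f}$ is uncountable and therefore has a non-isolated point, so $\mathcal{L}'_{\varphi,f}\ne\emptyset$, while $\mathcal{L}'_{\varphi,f}$ is closed (being a derived set) and bounded (as $f$ is continuous on the compact set $\Lambda$), hence has a minimum $m$. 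To get $c_{\varphi,f}\ge m$ it suffices to show $L_{\varphi,f}(t)=0$ for every $t\le m$: put $A=\mathcal{L}_{\varphi,f}\cap(-\infty,t)$; each $x\in A$ has $x<m=\min\mathcal{L}'_{\varphi,f}$, so $x\notin\mathcal{L}'_{\varphi,f}$ and there is $\epsilon_x>0$ with $(x-\epsilon_x,x+\epsilon_x)\cap\mathcal{L}_{\varphi,f}=\{x\}$; the open intervals $(x-\epsilon_x/3,x+\epsilon_x/3)$, $x\in A$, are then pairwise disjoint (if two met, the one of smaller radius would place its centre inside the $\epsilon$-interval of the other, forcing the two centres to coincide), so $A$ is countable, $HD(A)=0$, and consequently $c_{\varphi,f}=\sup\{t:L_{\varphi,f}(t)=0\}\ge m$.

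For $c_{\varphi,f}\le m$ I must show $L_{\varphi,f}(t)>0$ whenever $t>m$, and the crucial claim is that $HD(\Lambda_{t_1})>0$ for every $t_1>m$. If this failed, then, since $t\mapsto HD(\Lambda_t)$ is nondecreasing, $HD(\Lambda_{t_1})=0$ would hold throughout some interval $(m,m+\rho)$, and I would choose $t_1\in(m,\min\{m+\rho,t\})$ for which $\Lambda_{t_1}$ is locally maximal (which holds for all but countably many parameters). By the spectral decomposition theorem the non-wandering set of $\varphi|_{\Lambda_{t_1}}$ is a finite union of basic sets; a nontrivial basic set is a subhorseshoe and has positive Hausdorff dimension, so here every basic set is a periodic orbit; moreover there is no heteroclinic cycle among them, since such a cycle would produce a horseshoe inside the locally maximal set $\Lambda_{t_1}$; consequently, for every $x\in\Lambda_{t_1}$ the set $\omega(x)$ is a single one of these finitely many periodic orbits, and hence $\ell_{\varphi,f}(x)=\limsup_{n}f(\varphi^n(x))=\max f|_{\omega(x)}$ takes only finitely many values. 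By Lemma \ref{L1}, $\mathcal{L}_{\varphi,f}\cap(-\infty,t_1)=\bigcup_{s<t_1}\ell_{\varphi,f}(\Lambda_s)\subseteq\ell_{\varphi,f}(\Lambda_{t_1})$ would then be finite, contradicting that $m<t_1$ is an accumulation point of $\mathcal{L}_{\varphi,f}$ (every neighbourhood of $m$ contains infinitely many points of $\mathcal{L}_{\varphi,f}$). This proves $HD(\Lambda_{t_1})>0$ for all $t_1>m$.

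Now fix $t>m$ and choose $t_1\in(m,t)$ with $\Lambda_{t_1}$ locally maximal; since $HD(\Lambda_{t_1})>0$, the same spectral-decomposition analysis (if all basic sets were periodic orbits, $\Lambda_{t_1}$ would be a finite union of periodic orbits together with a countable set of homoclinic and heteroclinic orbits, hence countable and of zero dimension) shows that $\Lambda_{t_1}$ contains a nontrivial basic set, i.e. a subhorseshoe $\tilde\Lambda\subset\Lambda_{t_1}$ with $HD(\tilde\Lambda)>0$. (Alternatively, and without invoking local maximality at this step, one may apply Corollary \ref{uniform} to $X=\Lambda_{t_1}$ — legitimate since $D_u(\Lambda_{t_1}),D_s(\Lambda_{t_1})>0$ by \ref{Ds}--\ref{Du} — to obtain a subhorseshoe inside $\Lambda_{\max f|_{\Lambda_{t_1}}-\delta}\subset\Lambda_{t_1-\delta}$ with positive $D_u$, hence nontrivial.) For $x\in\tilde\Lambda$ one has $\ell_{\varphi,f}(x)\le m_{\varphi,f}(x)\le t_1<t$, so $\ell_{\varphi,f}(\tilde\Lambda)\subseteq\mathcal{L}_{\varphi,f}\cap(-\infty,t)$, while Proposition \ref{lagrange1} gives $HD(\ell_{\varphi,f}(\tilde\Lambda))=\min\{1,HD(\tilde\Lambda)\}>0$; therefore $L_{\varphi,f}(t)\ge HD(\ell_{\varphi,f}(\tilde\Lambda))>0$. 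Since $t>m$ was arbitrary, $c_{\varphi,f}\le m$, and combined with the first paragraph $c_{\varphi,f}=m=\min\mathcal{L}'_{\varphi,f}$.

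The step I expect to be the main obstacle is the implication ``$HD(\Lambda_{t_1})=0\ \Rightarrow\ \ell_{\varphi,f}(\Lambda_{t_1})$ is finite'' inside the second paragraph: it rests on the local maximality of $\Lambda_{t_1}$ (available outside a countable set of parameters) and on the structure theory of locally maximal hyperbolic surface sets, and it is exactly what rules out the a priori possibility that $\mathcal{L}_{\varphi,f}$ accumulates at $m$ along a zero-dimensional Cantor set. The genericity hypotheses $\varphi\in\mathcal{U}^*$ and $f\in\mathcal{P}_{\varphi,\Lambda}$ enter the argument only through Corollary \ref{max} and Proposition \ref{lagrange1}, i.e. through the identification of the Hausdorff dimension of the Lagrange image of a subhorseshoe with $\min\{1,HD(\cdot)\}$.
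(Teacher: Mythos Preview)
Your overall strategy---show $c_{\varphi,f}\ge m$ by countability below $m$, then show $L_{\varphi,f}(t)>0$ above $m$ by first proving $HD(\Lambda_{t_1})>0$ and then extracting a subhorseshoe---is different from the paper's and would work, but there is a genuine gap in the second paragraph. The assertion that $\Lambda_{t_1}$ is locally maximal ``for all but countably many parameters'' is not established anywhere in the paper and is not a standard fact; in general the subshifts $\Sigma_t$ are \emph{not} of finite type, and there is no reason the exceptional set should be countable. Since your implication ``$HD(\Lambda_{t_1})=0\Rightarrow \ell_{\varphi,f}(\Lambda_{t_1})$ finite'' hinges entirely on applying the spectral decomposition to a locally maximal set, this is a real hole.

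The fix is simple and uses machinery already in the paper: do not claim local maximality for $\Lambda_{t_1}$ itself, but pass to a hyperbolic set of finite type $P=M(C(\Lambda_{t_1},n))$ with $n$ large, so that $\Lambda_{t_1}\subset P\subset\Lambda_{t_1+\eta}$ for any prescribed $\eta>0$ (this containment is exactly what is used later in the paper, before Proposition~\ref{conection}). If $HD(\Lambda_t)=0$ on an interval above $m$, choose $t_1$ and $\eta$ so that $t_1+\eta$ stays in that interval; then $HD(P)\le HD(\Lambda_{t_1+\eta})=0$, and by Proposition~\ref{appendix} every subhorseshoe component of $P$ is a periodic orbit, whence $\ell_{\varphi,f}(P)$---and a fortiori $\ell_{\varphi,f}(\Lambda_{t_1})\supset\mathcal{L}_{\varphi,f}\cap(-\infty,t_1)$---is finite, contradicting that $m<t_1$ is an accumulation point. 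With this correction your route is valid.

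For comparison, the paper does \emph{not} go through $HD(\Lambda_{t_1})>0$ at all: it takes an injective sequence $y_n=\ell_{\varphi,f}(x_n)\to m$, finds a common central block $S$ recurring along infinitely many orbits, extracts two return words $\beta(i_1,n_1)$, $\beta(i_2,n_2)$ that are not powers of a common word, and sets $C=\{\beta(i_1,n_1)\beta(i_2,n_2),\beta(i_2,n_2)\beta(i_1,n_1)\}$ to obtain a nontrivial complete subshift with $\Lambda_C\subset\Lambda_{m+\epsilon/2}$. Your (corrected) argument is more conceptual and leans on the structure theory of finite-type sets; the paper's is a direct combinatorial construction that also yields an explicit subhorseshoe.
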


\begin{proof}
First, by Proposition \ref{lagrange1} 
$$HD(\mathcal{L}_{\varphi,f})=HD(\ell_{\varphi,f}(\Lambda))=\min \{1,HD(\Lambda)\}>0,$$
then, $\mathcal{L}_{\varphi,f}$ cannot be finite and as $\mathcal{L}_{\varphi,f}\subset f(\Lambda)$, it must be true that $\mathcal{L}^{'} _{\varphi, f}\neq \emptyset$.

Let $c^*_{\varphi,f}=\min L^{'} _{\varphi, f}$. Given $\epsilon>0$, it is clear that $L_{\varphi,f}(c^*_{\varphi,f}-\epsilon)=0$ because $\mathcal{L}_{\varphi,f}\cap  (-\infinity,c^*_{\varphi,f}- \epsilon)$ is finite. On the other hand, take an injective sequence $(y_n)_{n\in \mathbb{N}}=(\ell_{\varphi,f}(x_n))_{n\in \mathbb{N}}\subset \mathcal{L}_{\varphi,f}$ such that $\lim \limits_{n\rightarrow \infinity}y_n=c^*_{\varphi,f}$ and consider $N\in \mathbb{N}$ big enough such that for two elements $x,y\in \Lambda$ if their kneading sequences coincide in the central block (centered at the zero position) of size $2N+1$ then $\abs{f(x)-f(y)}<\epsilon/6$.

Take first $n_0\in \mathbb{N}$ large so that $\abs{\ell_{\varphi,f}(x_n)-c^*_{\varphi,f}}<\epsilon/6$ for $n\geq n_0$ and there are infinitely many $j\in \mathbb{N}$ such that $\abs{f(\varphi^j(x_n))-c^*_{\varphi,f}}<\epsilon/6$. Given such a pair $(j,n)$, consider the finite sequence with $2N+1$ terms $S(j,n)=(b_{j-N}^{(n)},b_{j-N+1}^{(n)},\cdots, b_{j}^{(n)}, \cdots, b_{j+N}^{(n)})$ where $\Pi^{-1}((b_j^{(n)})_{j\in \mathbb{Z}})=x_n$. There is a sequence $S$ such that for infinitely many values of $n$, $S$ appears infinitely many times as $S(j,n)$; i.e., there are $j_1(n)<j_2(n)<\cdots$ with $\lim \limits_{i\rightarrow \infinity}(j_{i+1}(n)-j_i(n))=\infinity$ and $S(j_i(n),n)=S$ for all $i\geq 1$ and for all $n$ in some infinite set $A\subset \mathbb{N}$.

Consider the sequences $\beta(i,n)$ for $i\geq 1$, $n\in A$ given by
$$\beta(i,n)=(b_{j_i(n)+N+1}^{(n)},b_{j_i(n)+N+2}^{(n)},\cdots, b_{j_{i+1}(n)+N}^{(n)}).$$
Taking $n_1, n_2\in A$ distinct and $r=r(n_1,n_2)$ large enough such that for $j\geq r$, $f(\varphi^j(x_{n_1}))<\ell_{\varphi,f}(x_{n_1})+\epsilon/6$ and $f(\varphi^j(x_{n_2}))<\ell_{\varphi,f}(x_{n_2})+\epsilon/6$. There are $i_1\geq r$ and $i_2 \geq r$ for which there is no a sequence $\gamma$ such that $\beta(i_1,n_1)$ and $\beta(i_2,n_2)$ are concatenations of copies of $\gamma$, otherwise $y_{n_1}=y_{n_2}$  because for $n\in A$ 
$$\Pi(x_n)=(\cdots, b_1^{(n)},\cdots b_{j_1(n)+N}^{(n)},\beta(1,n),\beta(2,n),\cdots ,\beta(m,n), \cdots).$$

This implies that, by taking
$$C=\{\beta(i_1,n_1)\beta(i_2,n_2), \beta(i_2,n_2)\beta(i_1,n_1)\},$$
we have $\Sigma(C)$ is a complete subshift and for  $x\in \Lambda(\Sigma(C))=\Lambda_{C}$ (the subhorseshoe associated to $\Sigma(C)$) we have $m_{\varphi,f}(x)<c^*_{\varphi,f}+\epsilon/2$. Indeed, for every $k\in \mathbb{Z}$, the kneading sequence of $\varphi^k(x)$ coincides in the central block of size $2N+1$ with the kneading sequence of $\varphi^l(x_{\theta})$ where $\theta$ is either $n_1$ or $n_2$ and $l\geq r$. So,
$$f(\varphi^k(x))<f(\varphi^l(x_{\theta}))+\frac{\epsilon}{6}<\ell_{\varphi,f}(x_{\theta})+\frac{\epsilon}{3}< c^*_{\varphi,f}+\frac{\epsilon}{2}.$$

Therefore, using one more time Proposition \ref{lagrange1} and Lemma \ref{L1} we conclude 
$$0<\min \{1,HD(\Lambda_{C})\}=HD(\ell_{\varphi,f}(\Lambda_{C}))\leq HD(\ell_{\varphi,f}(\Lambda_{c^*_{\varphi,f}+\epsilon/2}))\leq L_{\varphi, f}(c^*_{\varphi,f}+\epsilon).$$
Then, by definition $c^*_{\varphi,f}=c_{\varphi,f}$, which ends the proof of the proposition.
\end{proof}
\begin{corollary}If $HD(\Lambda)<1$ one has
 $$c_{\varphi,f}=\max\{t\in \mathbb{R}:HD(\Lambda_t)=0\}.$$
\end{corollary}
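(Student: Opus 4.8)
The plan is to prove the stronger statement that $\{t\in\mathbb{R}:HD(\Lambda_t)=0\}=(-\infty,c_{\varphi,f}]$, which simultaneously gives the existence of the maximum and the asserted identity. I will use freely that, since $HD(\Lambda)<1$, Proposition \ref{lagrange1} gives $\mathcal{P}_{\varphi,\Lambda}\subset\mathcal{R}_{\varphi,\Lambda}$, so Proposition \ref{mcu2} applies to the fixed $f$; that $HD(\ell_{\varphi,f}(\tilde\Lambda))=\min\{1,HD(\tilde\Lambda)\}=HD(\tilde\Lambda)$ for every subhorseshoe $\tilde\Lambda\subset\Lambda$ (Proposition \ref{lagrange1} together with $HD(\Lambda)<1$); and that $L_{\varphi,f}$ is non-decreasing and nonnegative, so that $L_{\varphi,f}(s)=0$ for every $s<c_{\varphi,f}$.

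One inclusion is immediate. If $t>c_{\varphi,f}$ then, since $c_{\varphi,f}=\sup\{s:L_{\varphi,f}(s)=0\}$ and $L_{\varphi,f}$ is non-decreasing, $L_{\varphi,f}(t)>0$; hence by \ref{L} one has $HD(\Lambda_t)\ge L_{\varphi,f}(t)>0$. Thus $\{t:HD(\Lambda_t)=0\}\subseteq(-\infty,c_{\varphi,f}]$.

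For the reverse inclusion, since $\Lambda_t\subseteq\Lambda_{c_{\varphi,f}}$ for all $t\le c_{\varphi,f}$, it is enough to show $HD(\Lambda_{c_{\varphi,f}})=0$, and I will do this by contradiction. Assume $HD(\Lambda_{c_{\varphi,f}})>0$; then by \ref{Du} the compact $\varphi$-invariant set $X:=\Lambda_{c_{\varphi,f}}$ satisfies $D_u(X)=:c_0>0$. Applying Proposition \ref{mcu2} with $\epsilon=1/2$ yields a subhorseshoe $\Lambda^u(X)$ of $\Lambda$ with $D_u(\Lambda^u(X))>\tfrac12 c_0>0$ and $\Lambda^u(X)\subset\Lambda_{\max f|_X-\delta}$ for some $\delta=\delta(1/2,c_0)>0$. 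Since $f(x)=f(\varphi^0(x))\le c_{\varphi,f}$ for every $x\in X=\Lambda_{c_{\varphi,f}}$, we have $\max f|_X\le c_{\varphi,f}$, hence $\Lambda^u(X)\subset\Lambda_{c_{\varphi,f}-\delta}$. Because $D_u(\Lambda^u(X))>0$, the subhorseshoe $\Lambda^u(X)$ is non-trivial (a periodic orbit projects under $\pi^u$ to a finite set, so has $D_u=0$); since $\pi^u$ is Lipschitz this gives $HD(\Lambda^u(X))\ge HD(\pi^u(\Lambda^u(X)))=D_u(\Lambda^u(X))>0$, and therefore $HD(\ell_{\varphi,f}(\Lambda^u(X)))=HD(\Lambda^u(X))>0$. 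Finally, by Lemma \ref{L1} and the inclusion $\Lambda^u(X)\subset\Lambda_{c_{\varphi,f}-\delta}$,
$$L_{\varphi,f}\Big(c_{\varphi,f}-\tfrac{\delta}{2}\Big)\ \ge\ HD\big(\ell_{\varphi,f}(\Lambda_{c_{\varphi,f}-\delta})\big)\ \ge\ HD\big(\ell_{\varphi,f}(\Lambda^u(X))\big)\ >\ 0,$$
contradicting $L_{\varphi,f}(c_{\varphi,f}-\delta/2)=0$ (which holds because $c_{\varphi,f}-\delta/2<c_{\varphi,f}$). Hence $HD(\Lambda_{c_{\varphi,f}})=0$, and combining the two inclusions gives $\{t:HD(\Lambda_t)=0\}=(-\infty,c_{\varphi,f}]$, so this set has maximum $c_{\varphi,f}$, as claimed.

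I expect the only delicate point to be the last step: ensuring that the subhorseshoe produced by Proposition \ref{mcu2} genuinely lies \emph{strictly} below the level $c_{\varphi,f}$. This is exactly where the quantitative gap $\delta>0$, depending only on $\epsilon$ and on the lower bound $c_0$ for $D_u(X)$, is essential, together with the elementary fact that positive unstable box dimension forces positive Hausdorff dimension for a subhorseshoe. Everything else is a routine chaining of the monotonicity $\Lambda_t\subseteq\Lambda_{c_{\varphi,f}}$ with the already established identities $HD(\mathcal{L}_{\varphi,f}\cap(-\infty,t))=L_{\varphi,f}(t)=\sup_{s<t}HD(\ell_{\varphi,f}(\Lambda_s))$.
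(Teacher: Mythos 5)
Your proposal is correct and follows essentially the same route as the paper: both directions of the argument coincide with the paper's, and the key contradiction step (if $HD(\Lambda_{c_{\varphi,f}})>0$ then by the uniform estimate of Proposition \ref{mcu2} one finds a nontrivial subhorseshoe inside $\Lambda_{c_{\varphi,f}-\delta}$, forcing $L_{\varphi,f}(c_{\varphi,f}-\delta/2)>0$) is exactly the paper's argument. You merely spell out a few steps the paper leaves implicit, such as why positive $D_u$ forces the subhorseshoe to be nontrivial and of positive Hausdorff dimension.
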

\begin{proof}
It follows from the previous proposition and (\ref{L}) that $0<L_{\varphi, f}(c_{\varphi,f}+\epsilon)\leq HD(\Lambda_{c_{\varphi,f}+\epsilon}).$
Now, if $HD(\Lambda_{c_{\varphi,f}})>0$ then by (\ref{Du}), $D_u(\Lambda_{c_{\varphi,f}})>0$ (also $D_s(\Lambda_{c_{\varphi,f}})>0$), and by Proposition \ref{mcu2} we can find some horseshoe $\tilde{\Lambda}\subset\Lambda_{c_{\varphi,f}-\delta}$ for some $\delta>0$ and arguing as before, we get the contradiction $L_{\varphi, f}(c_{\varphi,f}-\delta/2)>0.$   
\end{proof}
\begin{remark}
This corollary remains true if $HD(\Lambda)\geq 1$ because of \cite[Proposition 1]{GCD} lets us also show the existence of $\tilde{\Lambda}$ and $\delta>0$ as before. 
\end{remark}
\begin{corollary}\label{c}
If $HD(\Lambda)<1$ then $L_{\varphi, f}$ is continuous in $c_{\varphi,f}.$
\end{corollary}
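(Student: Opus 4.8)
The plan is to split continuity at $c_{\varphi,f}$ into its two one-sided parts and then dispose of the one that is not automatic by means of Proposition \ref{mcu2}.

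Throughout, $\varphi\in\mathcal{U}^*$ and $f\in\mathcal{P}_{\varphi,\Lambda}\subset\mathcal{R}_{\varphi,\Lambda}$. First I would observe that Lemma \ref{L1} already gives that $L_{\varphi,f}$ is non-decreasing and left-continuous at every point, so left-continuity at $c_{\varphi,f}$ costs nothing; moreover, by the preceding corollary $HD(\Lambda_{c_{\varphi,f}})=0$, and since $\Lambda_s\subset\Lambda_{c_{\varphi,f}}$ for $s\le c_{\varphi,f}$, inequality \ref{L} yields $L_{\varphi,f}(s)\le HD(\Lambda_s)\le HD(\Lambda_{c_{\varphi,f}})=0$ for all such $s$. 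Hence it remains only to prove right-continuity, i.e. $\lim_{t\to c_{\varphi,f}^+}L_{\varphi,f}(t)=0$; and since $L_{\varphi,f}$ is monotone with $L_{\varphi,f}(t)\le HD(\Lambda_t)$ by \ref{L}, it is enough to show that $HD(\Lambda_t)\to 0$ as $t\to c_{\varphi,f}^+$.

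This last claim I would establish by contradiction, reusing on the right of $c_{\varphi,f}$ the mechanism already used in the preceding corollary. If the limit were not $0$, there would be $\epsilon_0>0$ and a sequence $t_n\downarrow c_{\varphi,f}$ with $HD(\Lambda_{t_n})\ge\epsilon_0$, hence $D_u(\Lambda_{t_n})\ge\epsilon_0/(\tilde{C}+1)=:c_0>0$ by \ref{Du}. Feeding $X=\Lambda_{t_n}$ and $\epsilon=1/2$ into Proposition \ref{mcu2} produces one constant $\delta=\delta(1/2,c_0)>0$, the same for every $n$, together with subhorseshoes $\Lambda^u(\Lambda_{t_n})$ satisfying $D_u(\Lambda^u(\Lambda_{t_n}))>c_0/2>0$ and $\Lambda^u(\Lambda_{t_n})\subset\Lambda_{\max f|_{\Lambda_{t_n}}-\delta}\subset\Lambda_{t_n-\delta}$. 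Picking $n$ with $t_n<c_{\varphi,f}+\delta$ forces $t_n-\delta<c_{\varphi,f}$, so $\Lambda^u(\Lambda_{t_n})\subset\Lambda_{t_n-\delta}\subset\Lambda_{c_{\varphi,f}}$ and therefore $HD(\Lambda^u(\Lambda_{t_n}))=0$; on the other hand a subhorseshoe with positive unstable limit capacity has positive Hausdorff dimension, since its unstable Cantor set is $C^{1+\alpha}$ dynamically defined and hence $HD(K^u(\Lambda^u(\Lambda_{t_n})))=D_u(\Lambda^u(\Lambda_{t_n}))>0$ — a contradiction. (Alternatively, one may finish exactly as in Proposition \ref{first}: Proposition \ref{lagrange1} turns the positive dimension of $\Lambda^u(\Lambda_{t_n})$ into $HD(\ell_{\varphi,f}(\Lambda_{t_n-\delta}))>0$, which via Lemma \ref{L1} contradicts $L_{\varphi,f}\equiv 0$ on $(-\infty,c_{\varphi,f})$.)

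The only delicate point is the $X$-uniformity of $\delta$: everything hinges on $\delta$ in Proposition \ref{mcu2} depending only on $\epsilon$ and on a lower bound $c_0$ for $D_u(X)$, and not on $X$ itself. That uniformity is precisely what the explicit estimate proved in that proposition (and recorded in Corollary \ref{uniform}) was designed to supply, so once it is invoked correctly the remaining steps are routine manipulations with the monotonicity of $t\mapsto\Lambda_t$ and the inequalities \ref{L} and \ref{Du}.
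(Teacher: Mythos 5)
Your proof is correct and follows essentially the same route as the paper's: both reduce continuity at $c_{\varphi,f}$ to showing $HD(\Lambda_t)\to 0$ as $t\to c_{\varphi,f}^+$, both obtain the uniform constant $\delta=\delta(1/2,c_0)$ from Proposition~\ref{mcu2}, and both derive the contradiction by pushing a positive-dimensional subhorseshoe $\Lambda^u(\Lambda_t)$ into $\Lambda_{t-\delta}\subset\Lambda_{c_{\varphi,f}}$, which has zero Hausdorff dimension by the preceding corollary. The only cosmetic difference is that you spell out why a subhorseshoe with positive $D_u$ has positive Hausdorff dimension and you isolate the (trivial) left-continuity step, which the paper leaves implicit.
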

\begin{proof}
Suppose $\lim \limits_{t \to\ c_{\varphi,f}^+} HD(\Lambda_t)=h>0$, then by (\ref{Du}), for $t>c_{\varphi,f}$ one has $D_u(\Lambda_t)\geq h/(1+\tilde{C}).$ On the other hand, Proposition \ref{mcu2} let us find some $\delta=\delta(\frac{1}{2},\frac{h}{1+\tilde{C}})>0$ such that for any $t>c_{\varphi,f}$ we can find some horseshoe $\Lambda^u(\Lambda_t)\subset \Lambda_{t-\delta}$ (the other conclusions of the proposition are not necessary here). By applying this to $t=c_{\varphi,f}+\delta/2$, we get the contradiction $0<HD(\Lambda^u(\Lambda_{c_{\varphi,f}+\delta/2}))\leq HD(\Lambda_{c_{\varphi,f}-\delta/2})$. Then 
$$0=L_{\varphi, f}(c_{\varphi,f})\leq \lim \limits_{t \to\ c_{\varphi,f}^+}L_{\varphi, f}(t)\leq\lim \limits_{t \to\ c_{\varphi,f}^+} HD(\Lambda_t)=0,$$ 
as we wanted to see.
\end{proof}
\begin{remark}\label{rr}
This corollary also holds when $HD(\Lambda)\geq 1$ because as we will see later, before $\tilde{c}_{\varphi,f}$, it is true some expression of the type $L_{\varphi, f}=\max\limits_{i}L_i$, where the functions $L_i$ are defined as $L_{\varphi, f}$ but are associated to horseshoes with Hausdorff dimension less than $1$.     
\end{remark}

\subsection{Geometric consequences of having a discontinuity}
In this subsection, we show how to associate to each discontinuity the pair of subhorseshoes described in the introduction of the section.

Take $\varphi \in \mathcal{U}^*$ with $HD(\Lambda)<1$, $f\in \mathcal{P}_{\varphi,\Lambda}$ and suppose $t_0\in \mathbb{R}$ is a discontinuity of the map  $t \mapsto L_{\varphi,f}(t)= HD(\mathcal{L}_{\varphi, f}\cap (-\infinity,t))$. So, there exists an $a>0$ such that
\begin{equation}\label{a-L}
L_{\varphi, f}(q)+a < L_{\varphi, f}(s) \ \mbox{for} \ q\leq t_0<s.    
\end{equation}
 By Corollary \ref{c} and (\ref{L}) we have $0<L_{\varphi, f}(t_0)\leq HD(\Lambda_{t_0}),$
then $D_u(\Lambda_{t_0})>0$ and by Proposition \ref{mcu2}, we can find some horseshoe $\Lambda^0 \subset \Lambda_{t_0}$.
For $0<\epsilon < a/2$ and $c_0=HD(\Lambda^0)/(\tilde{C}+1)>0$, where $\tilde{C}$ si given in (\ref{conservative}), take $\delta=\delta(\epsilon/2k, c_0) < \epsilon$ as in Corollary \ref{uniform} where $k>1$ is a Lipschitz's constant for $f$. 

Let us consider for $t\in \mathbb{R}$ and $n\in \mathbb{N}$ the set $C(\Lambda_t,n)$, defined in Subsection \ref{A technical proposition}. By compactness, one has 
$$C(\Lambda_{t_0},n)=\bigcap_{t> t_0} C(\Lambda_t,n).$$
In particular, for each $n$, there exists $t(n)>t_0$ such that for $t_0 < t \leq t(n)$
$$C(\Lambda_t,n)=C(\Lambda_{t_0},n).$$ 
Take then, $n=n(\delta/2k)=\min\{r\in\mathbb{N}:\forall \theta\in C(\Lambda,r), \  \text{diam}(R(\theta;0))\leq \delta/4k\}$ and consider the maximal invariant set 
$$P=M(C(\Lambda_{t_0},n))=\bigcap \limits_{m \in \mathbb{Z}} \varphi ^{-m}(\bigcup \limits_{\theta\in C(\Lambda_{t_0},n)}  R(\theta;0))=\bigcap \limits_{m \in \mathbb{Z}} \varphi ^{-m}(\bigcup \limits_{\theta\in C(\Lambda_t,n)}  R(\theta;0))$$
for $t_0 < t \leq t(n)$. Observe that given $x\in P$ and $m\in \mathbb{Z}$ if $y\in \Lambda_{t_0}$ belongs to the same rectangle $R(\theta;0)$ as $\varphi^m(x)$ for some $\theta\in C(\Lambda_{t_0},n)$ then
$$f(\varphi^m(x))\leq f(\varphi^m(x))-f(y)+t_0  \leq k\cdot d(\varphi^m(x),y)+t_0 \leq k\cdot \frac{\delta}{4k}+t_0 < \frac{\delta}{2} + t_0$$
and so $P \subset \Lambda_{t_0+\delta /2}$.

Remember that for any subhorseshoe $\tilde{\Lambda} \subset \Lambda$, being locally maximal, we have 
\begin{equation}\label{omega}
 \bigcup \limits_{y\in \tilde{\Lambda}}W^s(y)=W^s(\tilde{\Lambda})= \{y\in S: \lim \limits_{n \to \infty}d(\varphi^n(y),\tilde{\Lambda})=0\}.   
\end{equation}
Now, by Proposition \ref{appendix}, the set $P$ admits a decomposition $P=\bigcup \limits_{i\in \mathcal{I}} \tilde{\Lambda}_i$ where $\mathcal{I}$ is a finite index set and for any $i\in \mathcal{I}$,\ $\tilde{\Lambda}_i$ is a subhorseshoe or a transient set. In particular, given $i_1\in \mathcal{I}$ we can find $i_2\in \mathcal{I}$ such that $\tilde{\Lambda}_{i_2}$ is a subhorseshoe with $\omega(x)\subset \tilde{\Lambda}_{i_2}$ for every $x\in\tilde{\Lambda}_{i_1}$; and from this and (\ref{omega}), it follows that $\ell_{\varphi,f}(x)=\ell_{\varphi,f}(y)$ for some $y\in\tilde{\Lambda}_{i_2}$. We conclude then 
$$\ell_{\varphi,f}(P)=\bigcup \limits_{i\in \mathcal{I}} \ell_{\varphi,f}(\tilde{\Lambda}_i)=\bigcup \limits_{\substack{i\in \mathcal{I}: \ \tilde{\Lambda}_i \ is\\ horseshoe }} \ell_{\varphi,f}(\tilde{\Lambda}_i) \cup \bigcup \limits_{\substack{i\in \mathcal{I}: \ \tilde{\Lambda}_i\\ is \ orbit }} \ell_{\varphi,f}(\tilde{\Lambda}_i)$$ and by Proposition \ref{lagrange1}
\begin{eqnarray*}
HD(\ell_{\varphi,f}(P))&=&HD( \bigcup \limits_{\substack{i\in \mathcal{I}: \ \tilde{\Lambda}_i \ is\\ horseshoe }} \ell_{\varphi,f}(\tilde{\Lambda}_i))=\max \limits_{\substack{i\in \mathcal{I}: \ \tilde{\Lambda}_i \ is\\ horseshoe }} HD(\ell_{\varphi,f}(\tilde{\Lambda}_i))=\max \limits_{\substack{i\in \mathcal{I}: \ \tilde{\Lambda}_i \ is\\ horseshoe }} HD(\tilde{\Lambda}_i).
\end{eqnarray*}

Let $\tilde{\Lambda}_{i_0}$ with $HD(\ell_{\varphi,f}(P))=HD(\tilde{\Lambda}_{i_0})$. As $\Lambda^0 \subset P$, by (\ref{Ds}) and (\ref{Du}) one has
$$c_0\leq HD(\tilde{\Lambda}_{i_0})/(\tilde{C}+1)\leq D_s(\tilde{\Lambda}_{i_0}) \ \mbox{and also}\ c_0\leq HD(\tilde{\Lambda}_{i_0})/(\tilde{C}+1)\leq D_u(\tilde{\Lambda}_{i_0})$$
then, Corollary \ref{uniform} applied to $\tilde{\Lambda}_{i_0}$ let us show the existence of two horseshoes $\Lambda ^s(t_0)$ and  $\Lambda ^u(t_0)$ of $\Lambda$ such that
$$D_u(\Lambda ^u(t_0))>D_u(\tilde{\Lambda}_{i_0})-\epsilon/2k, \quad D_s(\Lambda ^s(t_0))>D_s(\tilde{\Lambda}_{i_0})-\epsilon/2k,$$
$$\Lambda ^u(t_0)\cup \Lambda ^s(t_0) \subset \Lambda_{(t_0+\delta/2)-\delta}=\Lambda_{t_0-\delta/2},$$ 
and for every $x\in \Lambda ^u(t_0) \cup \Lambda ^s(t_0)$ the set $(\tilde{\Lambda}_{i_0})_{\epsilon/2k}(x)$ is neither bounded below nor bounded above.

Now, suppose there exists a subhorseshoe $\widetilde{\Lambda}\subset\Lambda_q$ for some $q<t_0$ with $\Lambda ^u(t_0) \cup \Lambda ^s(t_0) \subset \widetilde{\Lambda}$, then as $\Lambda_t\subset P$ for $t_0<t\leq t(h)$, we have by (\ref{a-L}) and Lemma \ref{L1}
\begin{eqnarray*}
L_{\varphi, f}(t_0)+a/2 &<& L_{\varphi, f}(t_0)+a-\epsilon/k <HD(\ell_{\varphi,f}(P))-\epsilon/k =HD(\tilde{\Lambda}_{i_0}) -\epsilon/k \\ &<& D_u(\Lambda ^u(t_0))+D_s(\Lambda ^s(t_0)) \leq HD(\widetilde{\Lambda})=HD(\ell_{\varphi,f}(\widetilde{\Lambda})) \leq HD(\ell_{\varphi,f}(\Lambda_q))\\ &\leq& \sup \limits_{s <t_0} HD(\ell_{\varphi,f}(\Lambda_s)) = L_{\varphi, f}(t_0)
\end{eqnarray*}
which is a contradiction. Then, by definition, $ \Lambda^s(t_0)$ and $ \Lambda^u(t_0)$ do not connect before $t_0$.

On the other hand, fix $x\in \Lambda ^s(t_0)$, $y\in \Lambda ^u(t_0)$ with kneading sequences $(x_n)_{n\in \mathbb{Z}}$, respectively $(y_n)_{n\in \mathbb{Z}}$. As the sets $(\tilde{\Lambda}_{i_0})_{\epsilon/2k}(x)$ and $(\tilde{\Lambda}_{i_0})_{\epsilon/2k}(y)$ are nonempty, we can find two words $\theta$ and $\tilde{\theta}$ in $C(\tilde{\Lambda}_{i_0},n(\epsilon/2k))$ that appear respectively in the sequences $(x_n)_{n\in \mathbb{Z}}$ and $(y_n)_{n\in \mathbb{Z}}$ as sub-words and also appear in the kneading sequence of two points $\tilde{x}_1, \tilde{y}_1\in \tilde{\Lambda}_{i_0}$, i.e., $\tilde{x}_1 \in R(\theta;0)$, and $\tilde{y}_1 \in R(\tilde{\theta};0)$, $(x_{N_1}, \dots x_{N_1+\abs{\theta}-1})=\theta$ and $(y_{-N_2-\abs{\tilde{\theta}}+1}, \dots y_{-N_2})=\tilde{\theta}$ for some $N_1,N_2>0$.

As $\tilde{\Lambda}_{i_0}$ is a horseshoe, we can find a point $z_1\in \tilde{\Lambda}_{i_0}$ with kneading sequence of the form
\begin{eqnarray*}
\Pi(z_1)=(\dots,z_{-2},z_{-1};\theta,z_{\abs{\theta}}, \dots ,z_{\abs{\theta}+r_1}, \tilde{\theta},z_{\abs{\theta}+r_1+\abs{\tilde{\theta}}+1}, \dots)
\end{eqnarray*}
for some $r_1>0$. Then consider the point $z\in \Lambda$ with kneading sequence 
\begin{eqnarray*}
\Pi(z)= (\dots,x_{-2},x_{-1}; x_0, \dots, x_{N_1-1},\theta, 
z_{\abs{\theta}}, \dots ,z_{\abs{\theta}+r_1}, \tilde{\theta}, y_{-N_2+1},y_{-N_2+2},y_{-N_2+3},\dots)
\end{eqnarray*}
note that, by construction $z\in W^u(\Lambda ^s(t_0))\cap W^s(\Lambda ^u(t_0))\cap \tilde{P}$ where 
$$\tilde{P}=M(C(\Lambda ^u(t_0) \cup \Lambda ^s(t_0) \cup \tilde{\Lambda}_{i_0},n(\epsilon/2k)))=\bigcap \limits_{m \in \mathbb{Z}} \varphi ^{-m}(\bigcup \limits_{\theta\in C(\Lambda ^u(t_0) \cup \Lambda ^s(t_0) \cup \tilde{\Lambda}_{i_0},n(\epsilon/2k))}  R(\theta;0)).$$  
Analogously, we can find $\tilde{z}\in W^u(\Lambda ^u(t_0))\cap W^s(\Lambda ^s(t_0))\cap \tilde{P}$. 
Moreover, as $\Lambda ^u(t_0) \cup \Lambda ^s(t_0) \cup \tilde{\Lambda}_{i_0} \subset \Lambda_{t_0+\delta/2}$, reasoning as we did for $P$, we have $\tilde{P}\subset \Lambda_{k\cdot\epsilon/2k+t_0+\delta/2}=\Lambda_{\epsilon/2+t_0+\delta/2}.$ That is,
$$\Lambda ^s(t_0) \cup \Lambda ^u(t_0) \cup \mathcal{O}(z) \cup \mathcal{O}(\tilde{z}) \subset \Lambda_{\epsilon/2+t_0+\delta/2}$$
and using Proposition \ref{connection11} we get that $\Lambda^s(t_0)$ and $ \Lambda^u(t_0)$ connect before $t_0+\epsilon$.

We summarize our conclusions in the following proposition
 \begin{proposition}\label{conection}
 Take $\varphi \in \mathcal{U}^*$ with $HD(\Lambda)<1$,\ $f\in \mathcal{P}_{\varphi,\Lambda}$ and some discontinuity $t_0$ of the map
 $$t \rightarrow L_{\varphi, f}(t)=HD(\mathcal{L}_{\varphi, f}\cap (-\infinity,t)).$$ 
 Then, given $\epsilon >0$ there are two subhorseshoes $\Lambda^s(t_0)$ and $\Lambda^u(t_0)$ and some $0<\eta<\epsilon$ such that
 \begin{itemize}
     \item $\Lambda^s(t_0)\cup \Lambda^u(t_0) \subset \Lambda_{t_0-{\eta}}$,
     \item $ \Lambda^s(t_0)$ and $ \Lambda^u(t_0)$ do not connect before $t_0$,
     \item $ \Lambda^s(t_0)$ and $ \Lambda^u(t_0)$ connect before $t_0+\epsilon$.
 \end{itemize} 
 \end{proposition}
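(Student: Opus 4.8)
Since the discussion preceding the statement already carries out essentially the whole construction, the plan is to organize it into a verification of the three bullet points, with care for the hierarchy of constants. I would first reduce to the case $\epsilon<a/2$, where $a>0$ denotes the jump at $t_0$, so that $L_{\varphi,f}(q)+a<L_{\varphi,f}(s)$ for all $q\le t_0<s$; shrinking $\epsilon$ only strengthens the third bullet, and the output $\eta$ will come out smaller than $\epsilon$ regardless. Fix a Lipschitz constant $k>1$ for $f$. Combining Corollary \ref{c} (continuity of $L_{\varphi,f}$ at $c_{\varphi,f}$) with \ref{L} gives $0<L_{\varphi,f}(t_0)\le HD(\Lambda_{t_0})$, hence $D_u(\Lambda_{t_0})>0$ by \ref{Du}, so Proposition \ref{mcu2} yields a subhorseshoe $\Lambda^0\subset\Lambda_{t_0}$. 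I would then set $c_0=HD(\Lambda^0)/(\tilde C+1)>0$ and let $\delta=\delta(\epsilon/2k,c_0)<\epsilon$ be the constant of Corollary \ref{uniform}.

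Next I would build the finite-type trap $P$. Using the compactness identity $C(\Lambda_{t_0},n)=\bigcap_{t>t_0}C(\Lambda_t,n)$, choose $n=n(\delta/2k)$ and $t(n)>t_0$ with $C(\Lambda_t,n)=C(\Lambda_{t_0},n)$, and hence $\Lambda_t\subset P:=M(C(\Lambda_{t_0},n))$, for all $t_0<t\le t(n)$; the Lipschitz bound on the rectangles $R(\theta;0)$ then gives $P\subset\Lambda_{t_0+\delta/2}$. Decomposing $P=\bigcup_{i\in\mathcal I}\tilde\Lambda_i$ via Proposition \ref{appendix} and using that $\omega$-limits of transient points lie in subhorseshoes, together with \ref{omega} and Proposition \ref{lagrange1}, one obtains $HD(\ell_{\varphi,f}(P))=\max_i HD(\tilde\Lambda_i)=HD(\tilde\Lambda_{i_0})$ for some horseshoe piece $\tilde\Lambda_{i_0}$. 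Since $\Lambda^0\subset P$, the bounds \ref{Ds} and \ref{Du} force $D_u(\tilde\Lambda_{i_0}),D_s(\tilde\Lambda_{i_0})\ge c_0$, so Corollary \ref{uniform} applied to $X=\tilde\Lambda_{i_0}$ produces subhorseshoes $\Lambda^u(t_0),\Lambda^s(t_0)\subset\Lambda_{(t_0+\delta/2)-\delta}=\Lambda_{t_0-\delta/2}$ with $D_u(\Lambda^u(t_0))>D_u(\tilde\Lambda_{i_0})-\epsilon/2k$, $D_s(\Lambda^s(t_0))>D_s(\tilde\Lambda_{i_0})-\epsilon/2k$, and with the return-time sets $(\tilde\Lambda_{i_0})_{\epsilon/2k}(x)$ unbounded above and below for every $x$ in these two horseshoes. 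Taking $\eta=\delta/2<\epsilon$ settles the first bullet.

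For the second bullet I would argue by contradiction: if a subhorseshoe $\widetilde\Lambda\subset\Lambda_q$ with $q<t_0$ contained $\Lambda^u(t_0)\cup\Lambda^s(t_0)$, then, since $\Lambda_t\subset P$ for $t_0<t\le t(n)$, the jump \ref{a-L}, Lemma \ref{L1}, the two dimension estimates and Proposition \ref{lagrange1} chain up to
\[
L_{\varphi,f}(t_0)+a/2<D_u(\Lambda^u(t_0))+D_s(\Lambda^s(t_0))\le HD(\widetilde\Lambda)\le HD(\ell_{\varphi,f}(\Lambda_q))\le L_{\varphi,f}(t_0),
\]
which is absurd. For the third bullet, fix $x\in\Lambda^s(t_0)$ and $y\in\Lambda^u(t_0)$; non-emptiness of $(\tilde\Lambda_{i_0})_{\epsilon/2k}(x)$ and $(\tilde\Lambda_{i_0})_{\epsilon/2k}(y)$ supplies words $\theta,\tilde\theta\in C(\tilde\Lambda_{i_0},n(\epsilon/2k))$ occurring as central blocks in the kneading sequences of $x$ and $y$ and also in those of points of $\tilde\Lambda_{i_0}$. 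Since $\tilde\Lambda_{i_0}$ is a horseshoe I would splice $\theta$ and $\tilde\theta$ into a single orbit of $\tilde\Lambda_{i_0}$ and then assemble heteroclinic points $z\in W^u(\Lambda^s(t_0))\cap W^s(\Lambda^u(t_0))$ and $\tilde z\in W^u(\Lambda^u(t_0))\cap W^s(\Lambda^s(t_0))$ whose kneading sequences are built entirely from blocks in $C(\Lambda^u(t_0)\cup\Lambda^s(t_0)\cup\tilde\Lambda_{i_0},n(\epsilon/2k))$; the Lipschitz estimate then gives $\mathcal O(z)\cup\mathcal O(\tilde z)\subset\Lambda_{t_0+\delta/2+\epsilon/2}$, and since $\delta/2+\epsilon/2<\epsilon$, Proposition \ref{connection11} yields that $\Lambda^s(t_0)$ and $\Lambda^u(t_0)$ connect before $t_0+\epsilon$.

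The step I expect to be the main obstacle is this last splicing. \textbf{Each concatenation inserts new central blocks into the kneading sequences of $z$ and $\tilde z$, and every one of them must be drawn from $C(\cdot,n(\epsilon/2k))$} so that the Lipschitz bound keeps $f$ along $\mathcal O(z)\cup\mathcal O(\tilde z)$ below $t_0+\delta/2+\epsilon/2$; this is precisely what the two-sided unboundedness of the return-time sets in Corollary \ref{uniform} is designed to guarantee. The rest is bookkeeping: keeping the constants $\epsilon$, $\epsilon/2k$, $\delta$, $\delta/2$, $n(\delta/2k)$, $n(\epsilon/2k)$ and $t(n)$ mutually consistent so that all the set inclusions above genuinely hold.
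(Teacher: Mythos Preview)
Your proposal is correct and follows essentially the same approach as the paper: the discussion in Section~3.4 preceding the proposition is exactly the construction you outline, with the same choice of constants, the same finite-type trap $P$, the same maximizing subhorseshoe $\tilde\Lambda_{i_0}$, the same contradiction for the non-connection before $t_0$, and the same splicing of heteroclinic points $z,\tilde z$ through $\tilde\Lambda_{i_0}$ to obtain connection before $t_0+\epsilon$. Your identification of the splicing step as the delicate point, and of the two-sided unboundedness of the return-time sets as its resolution, matches precisely how the paper uses Corollary~\ref{uniform} to ensure $z,\tilde z\in\tilde P\subset\Lambda_{t_0+\delta/2+\epsilon/2}$.
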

\begin{remark}
As in Remark \ref{rr}, this result also holds when $HD(\Lambda)\geq 1$ and $t_0<\tilde{c}_{\varphi,f}$. Note that, in our context, by Corollary \ref{max}, $L_{\varphi, f}$ is discontinuous in $\tilde{c}_{\varphi,f}$ if and only if $L_{\varphi, f}(\tilde{c}_{\varphi,f})<1.$
\end{remark}
 \begin{figure}[ht]
\centering
\includegraphics[width=1.0 \textwidth]{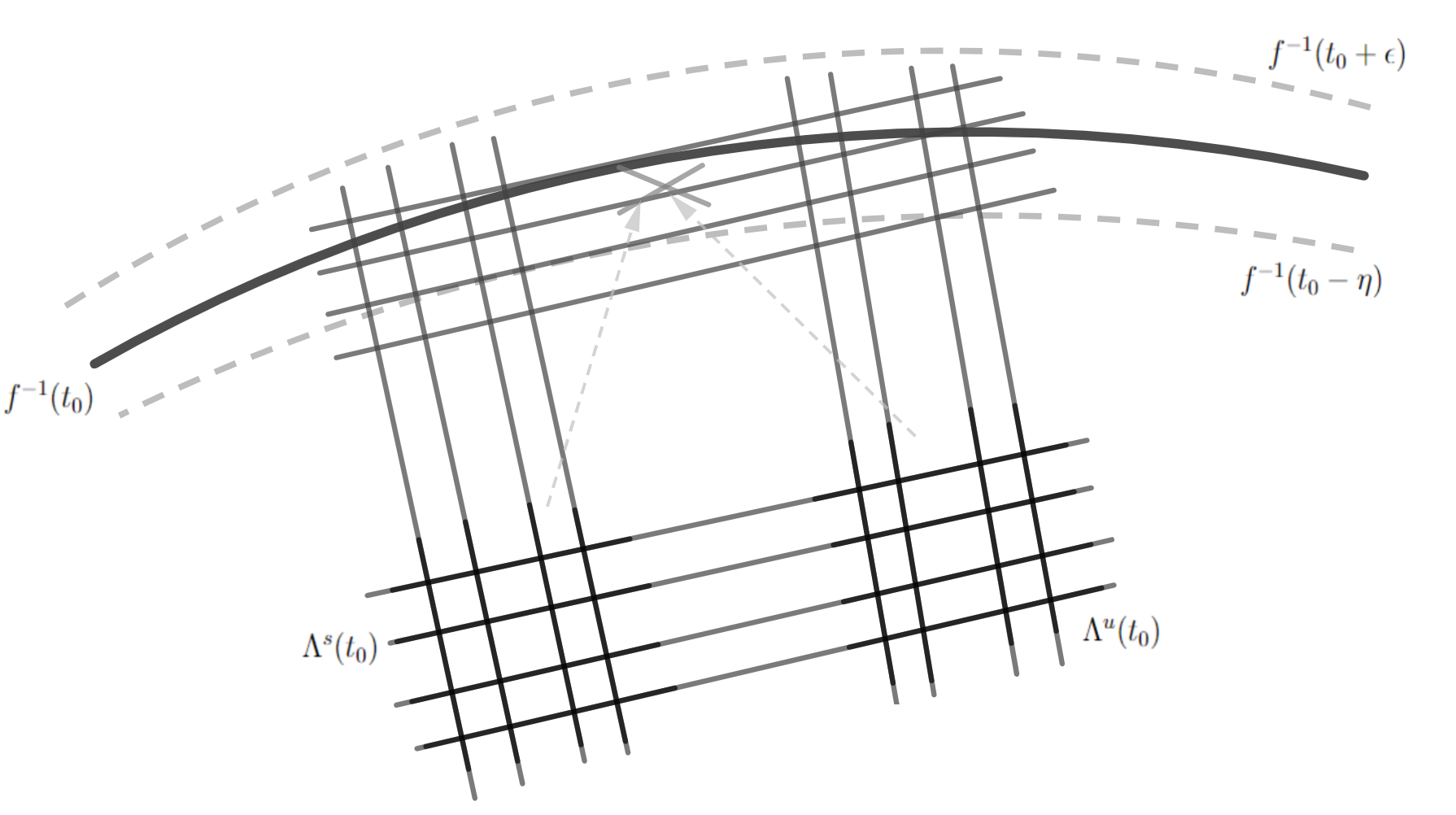}
\caption{The subhorseshoes $\Lambda^s(t_0)$ and $\Lambda^u(t_0)$ in Proposition \ref{conection}.}
\end{figure}

\subsection{Sequences of subhorseshoes}

In this subsection, we suppose existence of an infinite sequence of discontinuities of the map $L_{\varphi, f}$ in some closed subinterval of $I_{\varphi,f}$ that does not contain the first accumulation point of the Lagrange spectrum and then construct arbitrary large finite sequences of subhorseshoes with some specific properties. Observe that here is the first time when we use the hypothesis of the diffeomorphism being close to a conservative one.

Remember that any subhorseshoe $\tilde{\Lambda}_0$ of $\Lambda_0$ has a continuation $\tilde{\Lambda}\subset\Lambda$ for any $\varphi\in\mathcal{U}$. In \cite[Theorem $A$]{PV}, the authors showed that the maps $D_{\Lambda_0,u}:\mathcal{U}\rightarrow \mathbb{R}$ and $D_{\Lambda_0,s}:\mathcal{U}\rightarrow \mathbb{R}$ given by $D_{\Lambda_0,u}(\varphi)=D_u(\Lambda)$ and $D_{\Lambda_0,s}(\varphi)=D_u(\Lambda)$ are continuous and, in fact, the same proof also shows that the continuity of the maps $D_{\tilde{\Lambda}_0,u}(\varphi)=D_u(\tilde{\Lambda})$ and $D_{\tilde{\Lambda}_0,s}(\varphi)=D_u(\tilde{\Lambda})$ is uniform in the subhorseshoes. Moreover, as for $\varphi_0$ one can take $\tilde{C}=1$ in (\ref{conservative}) (see \cite[Remark 2.2]{CMM16}) then $D_u(\tilde{\Lambda}_0)=D_s(\tilde{\Lambda}_0)$ for any subhorseshoe $\tilde{\Lambda}_0$ of $\Lambda_0$ and, as a consequence, we can choose the neighborhood $ \mathcal{U}$  of $\varphi_0$ small enough such that for some constants $r_1,r_2$ with $r_1/r_2>999/1000$ and for any subhorseshoe $\tilde{\Lambda}$ of $\Lambda$ one has
\begin{equation}\label{c_1c_2}
  r_1D_s(\tilde{\Lambda})\leq D_u(\tilde{\Lambda})\leq r_2D_s(\tilde{\Lambda}).  
\end{equation}

%Remember that given $\varphi\in\mathcal{U}$, we have a Hölder conjugation $h_{\varphi}:\Lambda_0\rightarrow\Lambda$ and for any nontrivial subhorseshoe $\tilde{\Lambda}_0$ of $\Lambda_0$, it also has a continuation $\tilde{\Lambda}\subset\Lambda$ and the conjugation map $\tilde{h}_{\varphi}:\tilde{\Lambda}_0\rightarrow\tilde{\Lambda}$ satisfies $\tilde{h}_{\varphi}=h_{\varphi}|_{\tilde{\Lambda}_0}.$ In theorem $A$ of \cite{PV}, the authors showed that the maps $D_{\Lambda_0,u}:\mathcal{U}\rightarrow \mathbb{R}$ and $D_{\Lambda_0,s}:\mathcal{U}\rightarrow \mathbb{R}$ given by $D_{\Lambda_0,u}(\varphi)=D_u(\Lambda)$ and $D_{\Lambda_0,s}(\varphi)=D_u(\Lambda)$ are continuous and, in fact, the proof of that theorem only uses the constant of the stable manifold theorem and the constants that bound the derivatives of the diffeomorpism $\varphi_0$ and those that depend on $h_{\varphi}$. Then one has as a corollary, the continuity (uniform on the subhorseshoes) of the maps $D_{\tilde{\Lambda}_0,u}$ and $D_{\tilde{\Lambda}_0,s}$
 
Fix $\varphi \in \mathcal{U}^*$ with $HD(\Lambda)<1$, $f\in \mathcal{P}_{\varphi,\Lambda}$, some closed subinterval $I\subset I_{\varphi,f}$ that does not contain $c_{\varphi,f}$ and suppose we have an infinite sequence of discontinuities of $L_{\varphi, f}$ with $s\in I$ for every $s$ in the sequence. Then, as
$L_{\varphi,f}(\min I)\leq L_{\varphi, f}(s)\leq HD(\Lambda_{s})$, by (\ref{Ds}) and (\ref{Du})
\begin{equation}
    c\leq D_s(\Lambda_{s})\ \mbox{and}\ c\leq D_u(\Lambda_{s}),
\end{equation}
where $c=L_{\varphi,f}(\min I)/(\tilde{C}+1).$

Now, as the maps $t \mapsto D_u(\Lambda_{t})$ and $t \mapsto D_s(\Lambda_{t})$ are continuous (by Proposition \ref{R-generic}) and $D_u(\Lambda_{t})=D_s(\Lambda_{t})=0$ for $t<\min (f)$ and $D_u(\Lambda_{t})=D_u(\Lambda)$,  $D_s(\Lambda_{t})=D_s(\Lambda)$ for $t>\max (f)$. Then, they are uniformly continuous and so we can find some $\delta >0$ such that 
$$\abs{t-\bar{t}}<\delta\  \mbox{implies}\ \abs{D_u(\Lambda_{t})-D_u(\Lambda_{\bar{t}})} < 0.001c\  \mbox{and}\ \abs{D_s(\Lambda_{t})-D_s(\Lambda_{\bar{t}})}< 0.001c.$$
Also, for the sequence of discontinuities we have some accumulation point and unless pass to a sub-sequence, change the index set and discard some terms, we can suppose that $\{ t_n\}$ is of one of the next two types:
\begin{itemize}
    \item The sequence is strictly increasing $\{t_n\}_{n\geq 1}$ with $\lim \limits_{n\rightarrow \infinity}t_n:=t_0$ and $t_0-t_1<\delta$,
    \item The sequence is strictly increasing $\{t_n\}_{n\leq 0}$ with $\lim \limits_{n\rightarrow -\infinity}t_n:=t^*$ and $t_0-t^*<\delta.$
    \end{itemize}

In particular, for each $n$
\begin{equation}\label{estima0}
 0.999D_u(\Lambda_{t_0})=D_u(\Lambda_{t_0})- 0.001D_u(\Lambda_{t_0})\leq D_u(\Lambda_{t_0})- 0.001c<D_u(\Lambda_{t_n})  
\end{equation} 
and 
\begin{equation}\label{estima00}
 0.999D_s(\Lambda_{t_0})=D_s(\Lambda_{t_0})- 0.001D_s(\Lambda_{t_0})\leq D_s(\Lambda_{t_0})- 0.001c<D_s(\Lambda_{t_n}).  
\end{equation} 
Now, we will associate to each $n$ a pair of subhorseshoes of $\Lambda$. In fact, the two subhorseshoes $\Lambda^s(t_n)$ and $\Lambda^u(t_n)$ are given by Proposition \ref{conection} considering some $0<\epsilon_n< \min \{0.001,(t_{n+1}-t_n)/2\}$ and they satisfy 

\begin{itemize}
     \item $\Lambda^s(t_n)\cup \Lambda^u(t_n) \subset \Lambda_{t_n-{\eta_n}}$ for some $0<\eta_n<\epsilon_n$,
     \item $\Lambda^s(t_n)$ does not connect with $\Lambda^u(t_n)$ before $t_n$,
     \item $\Lambda^s(t_n)$ connects with $\Lambda^u(t_n)$ before $t_{n+1}$.
 \end{itemize} 
We are ready to prove the next proposition
  
  \begin{proposition}\label{cadeias}
  We can take $\theta \in \{s,u\}$ such that given $N\in \mathbb{N}$ arbitrary, there exists a sequence $n_1<n_2<...<n_N$ of elements of $\mathcal{I}$ (where $\mathcal{I}$ is the index set of the sequence $\{ t_n\}$) such that for $i,j\in  \{1,...,N \}$ with $i\neq j$,\ $\Lambda^{\theta}(t_{n_i})$ and $\Lambda^{\theta}(t_{n_j})$ do not connect before $\max \{ t_{n_i}, t_{n_j} \}.$
  \end{proposition}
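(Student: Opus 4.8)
The plan is to isolate, for every pair of indices in $\mathcal{I}$, a dichotomy of the form ``either the two stable subhorseshoes fail to connect, or the two unstable ones do'', and then to upgrade this to a single fixed choice of $\theta$ by a Ramsey argument. First I would fix $m,n\in\mathcal{I}$ with $m<n$, so that $t_m<t_n$, and argue that one cannot have \emph{both} that $\Lambda^s(t_m)$ connects with $\Lambda^s(t_n)$ before $t_n$ and that $\Lambda^u(t_m)$ connects with $\Lambda^u(t_n)$ before $t_n$. Indeed, assuming both, note that since $m<n$ one has $t_{m+1}\le t_n$, so the third property recorded just before the statement (coming from Proposition \ref{conection}) gives that $\Lambda^s(t_m)$ connects with $\Lambda^u(t_m)$ before $t_{m+1}$, and hence before $t_n$. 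Applying Corollary \ref{connection3} twice along the chain $\Lambda^s(t_n),\ \Lambda^s(t_m),\ \Lambda^u(t_m),\ \Lambda^u(t_n)$ --- every link being a connection before $t_n$ --- we would get that $\Lambda^s(t_n)$ connects with $\Lambda^u(t_n)$ before $t_n$, contradicting the second property recorded before the statement for the index $n$. Thus for every pair $m<n$ at least one of the statements ``$\Lambda^s(t_m)$ and $\Lambda^s(t_n)$ do not connect before $t_n$'' and ``$\Lambda^u(t_m)$ and $\Lambda^u(t_n)$ do not connect before $t_n$'' holds.

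Next I would turn this into a $2$-colouring of the pairs of the infinite set $\mathcal{I}$: colour $\{m,n\}$ (with $m<n$) by $s$ if $\Lambda^s(t_m)$ and $\Lambda^s(t_n)$ do not connect before $t_n$, and by $u$ otherwise --- in which case, by the dichotomy just proved, $\Lambda^u(t_m)$ and $\Lambda^u(t_n)$ do not connect before $t_n$. By the infinite Ramsey theorem there is an infinite $\mathcal{J}\subset\mathcal{I}$ on which this colouring is constant, with value $\theta\in\{s,u\}$; this is the $\theta$ claimed in the statement. (Alternatively, the finite Ramsey theorem together with the pigeonhole principle on the two colours produces monochromatic subsets of a fixed colour of every size, which is all that is needed.) Given $N\in\mathbb{N}$, take $n_1<\dots<n_N$ to be any $N$ elements of $\mathcal{J}$; for $i\ne j$, say $n_i<n_j$, strict monotonicity of $\{t_n\}$ gives $\max\{t_{n_i},t_{n_j}\}=t_{n_j}$, and the colour of $\{n_i,n_j\}$ being $\theta$ says exactly that $\Lambda^\theta(t_{n_i})$ and $\Lambda^\theta(t_{n_j})$ do not connect before $t_{n_j}=\max\{t_{n_i},t_{n_j}\}$. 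Since the connection relation is symmetric and $\max$ is symmetric, this also covers the case $n_i>n_j$, which is the conclusion.

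I do not expect a serious obstacle here; the one point to get right is the chain argument in the first step, which works precisely because the construction before the statement yields the ``connects'' property already before $t_{m+1}$ (hence with room to spare before $t_n$) while the ``does not connect'' property is demanded only at $t_n$ itself --- it is this gap between consecutive discontinuities that is being exploited. It is also worth noting that only the ``diagonal'' pairs $(\Lambda^s,\Lambda^s)$ and $(\Lambda^u,\Lambda^u)$ enter the argument, so two colours suffice for the Ramsey step and the mixed pairs $(\Lambda^s,\Lambda^u)$ play no role. Finally, the same argument applies verbatim whether $\{t_n\}$ is of the first type (strictly increasing to $t_0$) or the second type (strictly increasing to $t^*$ with non-positive indices), since only the monotonicity of $n\mapsto t_n$ and the three listed properties of the pairs $\Lambda^s(t_n),\Lambda^u(t_n)$ are used.
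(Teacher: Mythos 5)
Your argument is correct, and it reaches the same conclusion as the paper's but by a genuinely different route. Both proofs hinge on the same two ingredients — the transitivity of the connection relation (Corollary \ref{connection3}) and the fact that, for each index $n$, $\Lambda^s(t_n)$ and $\Lambda^u(t_n)$ do \emph{not} connect before $t_n$ while they \emph{do} connect before $t_{n+1}$. Your pairwise dichotomy (``not both $s$-pairs and $u$-pairs can connect before the larger $t$'') is established by exactly the four-term transitivity chain $\Lambda^s(t_n)\sim\Lambda^s(t_m)\sim\Lambda^u(t_m)\sim\Lambda^u(t_n)$, and the use of $t_{m+1}\le t_n$ (coming from strict monotonicity and $m<n$) to place the middle link below $t_n$ is the right observation; this is verbatim the contradiction the paper produces in its second case.

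The difference is organizational. You isolate the dichotomy as a statement about every ordered pair, encode it as a two-colouring of $[\mathcal{I}]^2$, and invoke (infinite or finite) Ramsey to extract arbitrarily large monochromatic sets with a single colour $\theta$. The paper instead introduces an ad hoc notion of ``$r$-chain'' (consecutive $s$-connections) and splits into two cases: if chain lengths are bounded, it greedily concatenates maximal chains and takes their right endpoints, which pairwise fail to $s$-connect (giving $\theta=s$); if arbitrarily long chains exist, it shows any two elements of a chain must fail to $u$-connect using the same transitivity contradiction (giving $\theta=u$). The paper's construction is self-contained and avoids naming Ramsey — it is essentially a hand-rolled monochromatic extraction tailored to this ordered setting — while yours is shorter and modular at the cost of appealing to Ramsey's theorem as a black box. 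Your remark that the finite Ramsey theorem plus pigeonhole suffices is also correct and keeps the argument at the same logical strength as the paper's. No gaps.
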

 
 \begin{proof}
 
 We said that a sequence $n_1< n_2 <... <n_r$ of elements of $\mathcal{I}$ is a \emph{r-chain} if  $\Lambda^s(t_{n_i})$ connects with $\Lambda^s(t_{n_{i+1}})$ before $t_{n_{i+1}}$ for $i=1,\dots r-1$. Then we have two cases:
 \begin{itemize}
    \item There exists some $R\in \mathbb{N}$ such that there is no $r$-chain for $r>R$.
    \item There are $r$-chains with $r$ arbitrarily big.
    \end{itemize}

 We do the proof when the index set of the sequence is $\mathcal{I}=\{n\in \mathbb{Z}:n\geq 1\}$, and the other case follows similarly.  
 
 In the first case take a maximal $r_1$-chain beginning with 1; that is, a $r_1$-chain  $1=n_1<n_2<...<n_{r_1}$ such that for any $n>n_{r_1}$,\ $1=n_1<n_2<...<n_{r_1}<n$ is not a $(r_1+1)$-chain and then $\Lambda^s(t_{n_{r_1}})$ does not connect with $\Lambda^s(t_{n})$ before $t_n$.
 Next take a maximal $r_2$-chain beginning with $n_{r_1}+1$:\ $n_{r_1}+1=n^{(r_1)}_1<n^{(r_1)}_2<\dots < n^{(r_1)}_{r_2}$ then, as before, for $n^{(r_1)}_{r_2}< n$,\ $\Lambda^s(t_{n^{(r_1)}_{r_2}})$ does not connect with $\Lambda^s(t_{n})$ before $t_n$. Now 
 consider a maximal $r_3$-chain beginning with $n^{(r_1)}_{r_2}+1$: \ $n^{(r_1)}_{r_2}+1=n^{(r_1,r_2)}_1<n^{(r_1,r_2)}_2<\dots < n^{(r_1,r_2)}_{r_3}$ then for $n^{(r_1,r_2)}_{r_3}< n$, \ $\Lambda^s(t_{n^{(r_1,r_2)}_{r_3}})$ does not connect with $\Lambda^s(t_{n})$ before $t_n$. 
 
 Continuing in this way we can construct inductively an increasing sequence 
 $$\{ \tilde{n}_k \}_{k\geq 2}= \{ n_{r_k}^{(r_1,r_2,\dots,r_{k-1})} \}_{k\geq 2}$$  
 such that for $k_1, k_2 \geq 2$ with $k_1\neq k_2$, \ $\Lambda^{s}(t_{\tilde{n}_{k_1}})$ and $\Lambda^{s}(t_{\tilde{n}_{k_2}})$ does not connect before $\max \{ t_{\tilde{n}_{k_1}}, t_{\tilde{n}_{k_2}} \}$.
 
On the other hand, in the second case, take $r\in \mathbb{N}$ arbitrarily big and $n_1< n_2 <... <n_r$ some $r$-chain, then we affirm that for $i,j\in  \{1,...,r \}$ with $i\neq j$, \ $\Lambda^u(t_{n_i})$ and $\Lambda^u(t_{n_j})$ does not connect before $\max \{ t_{n_i}, t_{n_j} \}.$ In other case if for some $i_0,j_0\in  \{1,...,r \}$ with $i_0 < j_0$, $\Lambda^u(t_{n_{i_0}})$ and $\Lambda^u(t_{n_{j_0}})$ connect before $t_{n_{j_0}}$ then as by Corollary \ref{connection3}, \ $\Lambda^s(t_{n_{j_0}})$ connect with $\Lambda^s(t_{n_{i_0}})$ before $t_{n_{j_0}}$ and as also $\Lambda^s(t_{n_{i_0}})$ connects with $\Lambda^u(t_{n_{i_0}})$ before $t_{n_{i_0}+1}$ (and then before $t_{n_{j_0}}$). Applying two times more that corollary we have that $\Lambda^s(t_{n_{j_0}})$ connects with $\Lambda^u(t_{n_{j_0}})$ before $t_{n_{j_0}}$ that is a contradiction. From this follows the result.
 \end{proof}
 
Without loss of generality, we will suppose that in the previous proposition $\theta=u$ (for $\theta=s$ the argument is similar) and call $\Lambda^u(t_n)=\Lambda^n$. 

\subsection{Subhorseshoes and connection by periodic orbits}
In this subsection, we associate to every term of the sequence $\{\Lambda^n\}_{n\in\mathcal{I}}$ a periodic orbit with the property that if $\Lambda^n$ and $\Lambda^m$ are associated with the same periodic orbit then they connect before $\max \{t_n,t_m\}$.

In order to do that, given some $n$, remember the construction of $\Lambda^n$ given by Proposition \ref{conection}. A close inspection of the proof of that proposition shows that for some maximal invariant set, said $P^n$, that contains $\Lambda_{t_n}$ we took the subhorseshoe with maximal Hausdorff dimension $\Lambda_0^n\subset P^n$ and then applied Proposition \ref{uniform} in order to obtain the subhorseshoe $\Lambda^n$ with
\begin{equation}\label{estima1}
D_u(\Lambda^n)>(1-\epsilon_n/2k)D_u(\Lambda_0^n)>(1-\epsilon_n)D_u(\Lambda_0^n)>0.999D_u(\Lambda_0^n).
\end{equation}

Next, if $D_u(P^n)=D_u(\Lambda_2^n)$ where $\Lambda_2^n\subset P^n$ is a subhorseshoe of $\Lambda$, then as $\Lambda_0^n$ has maximal dimension, it follows that either $D_u(\Lambda_2^n)\leq D_u(\Lambda_0^n)$ or $D_s(\Lambda_2^n)\leq D_s(\Lambda_0^n)$. In the first case 
$$D_u(\Lambda_{t_n})\leq D_u(P^n)=D_u(\Lambda_2^n)\leq D_u(\Lambda_0^n)\leq \frac{r_2}{r_1}D_u(\Lambda_0^n)$$
and in the second, (\ref{c_1c_2}) let us conclude that $$ D_u(\Lambda_{t_n})\leq D_u(P^n)=D_u(\Lambda_2^n)\leq r_2D_s(\Lambda_2^n)\leq r_2D_s(\Lambda_0^n)\leq \frac{r_2}{r_1}D_u(\Lambda_0^n)$$ that is,
\begin{equation}\label{estima2}
  D_u(\Lambda_{t_n}) \leq \frac{r_2}{r_1}D_u(\Lambda_0^n).
\end{equation}

Now, by (\ref{du}), we can take $r_0$ big enough such that $2^{2023}<N_u(\Lambda_{t_0},r_0)$ and
\begin{equation}\label{estima3}
  \frac{\log N_u(\Lambda_{t_0},r_0)}{r_0-c_1}<1.001 D_u(\Lambda_{t_0}). 
\end{equation}
Set $\mathcal{B}_0=\mathcal{C}_u(\Lambda_{t_0},r_0)$, $N_0=N_u(\Lambda_{t_0},r_0)$ and for $n\in \mathcal{I}$, $M\in \mathbb{N}$ define the set
$$\mathcal{B}_M(\Lambda^n):=\{\beta=\beta_1\dots\beta_M :\forall \, 1\leq j\leq M, \,\, \beta_j\in\mathcal{B}_0 \,\,  \textrm{ and } \,\, \Pi^u(\Lambda^n)\cap I^u(\beta)\neq\emptyset\}.$$

Before continuing, we introduce some notation. Consider $\beta=\beta_{k_1}\beta_{k_2}...\beta_{k_ \ell}=a_1...a_p \in \mathcal{A}^{p}, \ \beta_{k_i}\in \mathcal{B}_0, \ 1\le i\le \ell$. We say that $n\in \{1,...,p\}$ is the n-th position of $\beta$. If $\beta_{k_i}\in \mathcal{A}^{n_{k_{i}}} $ we write $|\beta_{k_i}|=n_{k_i}$ for its length and $P(\beta_{k_i})=\{1,2,...,n_{k_i}\}$ for its set of positions as a word in the alphabet $\mathcal{A}$ and given $s \in P(\beta_{k_i})$ we call  $P(\beta,k_i;s)=n_{k_1}+...+n_{k_{i-1}}+s$ the position in $\beta$ of the position $s$ of $\beta_{k_i}$.	

Recall that the sizes of the intervals $I^u(\alpha)$ behave essentially submultiplicatively due the bounded distortion property of $\psi_u$ (equation (\ref{bdp1})) so that, one has   
$$|I^u(\beta)|\leq \exp(-M(r_0-c_1))$$
for any $\beta\in\mathcal{B}_M(\Lambda^n)$, and thus, $\{I^u(\beta):\beta\in \mathcal{B}_M(\Lambda^n)\}$ is a covering of $\Pi^u(\Lambda^n)$ by intervals of sizes $\leq \exp(-M(r_0-c_1))$. In particular for $M(\Lambda^n)=M_n$ big enough
%\begin{eqnarray*}
%\dfrac{\log\abs{\mathcal{B}_{M_n}(\Lambda^n)}}{\log N_0^{M_n}}&=&\dfrac{\dfrac{\log\abs{\mathcal{B}_{M_n}(\Lambda^n)}}{-\log \exp (-M_n (r_0-c_1))}}{\dfrac{M_n\cdot \log N_0}{M_n(r_0-c_1)}}\\ &\geq& \frac{\dfrac{\log\abs{\mathcal{B}_{M_n}(\Lambda^n)}}{-\log \exp (-M_n (r_0-c_1))}}{1.001 D_u(\Lambda_{t_0})} \quad (\textrm{by equation \ref{estima3}}) \\&\geq& \frac{0.999D_u(\Lambda^n)}{1.001 D_u(\Lambda_{t_0})} \quad (\textrm{$M_n$ is big}) \\&\geq&\ \dfrac{0.999\cdot0.999D_u(\Lambda_0^n)}{1.001 D_u(\Lambda_{t_0})}\quad (\textrm{by equation \ref{estima1}})\\&\geq& \dfrac{r_1}{r_2}\dfrac{0.999\cdot0.999D_u(\Lambda_{t_n})}{1.001 D_u(\Lambda_{t_0})}\quad (\textrm{by equation \ref{estima2}})\\&\geq& \dfrac{r_1}{r_2}\dfrac{0.999\cdot0.999\cdot0.999}{1.001}\quad (\textrm{by equation \ref{estima0}})\\&>& 0.999^4/1.001 \\&>& 991/1000.
%\end{eqnarray*}
\begin{eqnarray*}
\dfrac{\log\abs{\mathcal{B}_{M_n}(\Lambda^n)}}{\log N_0^{M_n}}&=&\dfrac{\dfrac{\log\abs{\mathcal{B}_{M_n}(\Lambda^n)}}{-\log \exp (-M_n (r_0-c_1))}}{\dfrac{M_n\cdot \log N_0}{M_n(r_0-c_1)}}\\ &\geq& \frac{\dfrac{\log\abs{\mathcal{B}_{M_n}(\Lambda^n)}}{-\log \exp (-M_n (r_0-c_1))}}{1.001 D_u(\Lambda_{t_0})} \quad (\textrm{by Equation (\ref{estima3})}) \\
\end{eqnarray*}
\begin{eqnarray*}
 \ \ \ \ \ \ \ \ \ \ \ \ \ &\geq& \frac{0.999D_u(\Lambda^n)}{1.001 D_u(\Lambda_{t_0})} \quad (\textrm{$M_n$ is big}) \\ &\geq&\ \dfrac{0.999\cdot0.999D_u(\Lambda_0^n)}{1.001 D_u(\Lambda_{t_0})}\quad (\textrm{by Equation (\ref{estima1})})\\&\geq& \dfrac{r_1}{r_2}\dfrac{0.999\cdot0.999D_u(\Lambda_{t_n})}{1.001 D_u(\Lambda_{t_0})}\quad (\textrm{by Equation (\ref{estima2})})\\ &\geq& \dfrac{r_1}{r_2}\dfrac{0.999\cdot0.999\cdot0.999}{1.001}\quad (\textrm{by Equation (\ref{estima0})})\\&>& 0.999^4/1.001 \\&>& 991/1000.
\end{eqnarray*}
Thus we have proved the next result 
\begin{lemma}\label{Btam}
Given $n\in \mathcal{I}$ and $M_n$ large 
$$ \abs{\mathcal{B}_{M_n}(\Lambda^n)}\geq N_0^{\frac{991}{1000}\cdot M_n}.$$
\end{lemma}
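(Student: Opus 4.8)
The plan is to deduce the bound by interpreting $\{I^u(\beta):\beta\in\mathcal{B}_{M_n}(\Lambda^n)\}$ as an efficient covering of the Cantor set $\pi^u(\Lambda^n)$ and comparing its cardinality against the limit capacity $D_u(\Lambda^n)$; the conclusion is then exactly the chain of inequalities recorded just above, once its first two steps are justified. \textbf{Step 1: the family $\{I^u(\beta):\beta\in\mathcal{B}_{M_n}(\Lambda^n)\}$ covers $\pi^u(\Lambda^n)$ by intervals of length $\le e^{-M_n(r_0-c_1)}$.} The length bound is immediate from the bounded distortion property \ref{bdp1}: each $\beta_j\in\mathcal{B}_0=\mathcal{C}_u(\Lambda_{t_0},r_0)\subset P^{(u)}_{r_0}$ has $\abs{I^u(\beta_j)}\le e^{-r_0}$, and submultiplicativity gives $\abs{I^u(\beta_1\cdots\beta_{M_n})}\le e^{c_1M_n}\prod_{j}\abs{I^u(\beta_j)}\le e^{-M_n(r_0-c_1)}$. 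For the covering property I would use that $\Lambda^n=\Lambda^u(t_n)\subset\Lambda_{t_n-\eta_n}\subset\Lambda_{t_0}$ (recall $t_n\le t_0$), so $\pi^u(\Lambda^n)\subset\pi^u(\Lambda_{t_0})$ and $\mathcal{C}_u(\Lambda^n,r_0)\subset\mathcal{C}_u(\Lambda_{t_0},r_0)=\mathcal{B}_0$; given $x\in\pi^u(\Lambda^n)$, one cuts its $\psi_u$-itinerary into consecutive blocks, each block being the unique $P^{(u)}_{r_0}$-prefix of the corresponding shifted itinerary, and since $\pi^u(\Lambda^n)$ is $\psi_u$-invariant (it is the unstable Cantor set of the subhorseshoe $\Lambda^n$) every block lies in $\mathcal{C}_u(\Lambda^n,r_0)\subset\mathcal{B}_0$, so the concatenation $\beta$ of the first $M_n$ blocks satisfies $\beta\in\mathcal{B}_{M_n}(\Lambda^n)$ and $x\in I^u(\beta)$.

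\textbf{Step 2: a capacity lower bound for the size of the covering.} Since $D_u(\Lambda^n)$ is the limit capacity of $\pi^u(\Lambda^n)$ and is bounded below by a positive constant (by \ref{estima1}, \ref{estima2}, \ref{estima0} one has $D_u(\Lambda^n)\ge \frac{r_1}{r_2}\,0.999^2\,D_u(\Lambda_{t_0})>0$), the minimal number $N(\delta)$ of length-$\le\delta$ intervals needed to cover $\pi^u(\Lambda^n)$ satisfies $\frac{\log N(\delta)}{-\log\delta}\to D_u(\Lambda^n)$; hence, for $M_n$ large, $N(e^{-M_n(r_0-c_1)})\ge e^{0.999\,D_u(\Lambda^n)\,M_n(r_0-c_1)}$. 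By Step 1, $\mathcal{B}_{M_n}(\Lambda^n)$ provides a covering of this kind, so $\abs{\mathcal{B}_{M_n}(\Lambda^n)}\ge e^{0.999\,D_u(\Lambda^n)\,M_n(r_0-c_1)}$.

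\textbf{Step 3: the chain of inequalities.} Writing $\log N_0^{M_n}=M_n\log N_0$ and factoring, $\frac{\log\abs{\mathcal{B}_{M_n}(\Lambda^n)}}{\log N_0^{M_n}}=\big(\frac{\log\abs{\mathcal{B}_{M_n}(\Lambda^n)}}{M_n(r_0-c_1)}\big)\big/\big(\frac{\log N_0}{r_0-c_1}\big)$. Bounding the numerator below by $0.999\,D_u(\Lambda^n)$ (Step 2) and the denominator above by $1.001\,D_u(\Lambda_{t_0})$ (by \ref{estima3}, as $N_0=N_u(\Lambda_{t_0},r_0)$), then applying in turn \ref{estima1}, \ref{estima2}, \ref{estima0} and $r_1/r_2>999/1000$ yields the value $0.999^4/1.001>991/1000$ displayed above; consequently $\abs{\mathcal{B}_{M_n}(\Lambda^n)}\ge N_0^{\frac{991}{1000}M_n}$.

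\textbf{Expected main obstacle.} The only genuinely delicate point is the covering assertion in Step 1 — making the itinerary-chunking argument precise, i.e.\ checking that cutting at the successive scales $r_0, 2r_0,\dots$ really produces blocks in $\mathcal{B}_0$ and that $x$ indeed lies in the cylinder of the concatenation, which relies on $\psi_u$-invariance of $\pi^u(\Lambda^n)$ together with the inclusion $\pi^u(\Lambda^n)\subset\pi^u(\Lambda_{t_0})$. Steps 2 and 3 are, respectively, a standard property of limit capacities of $C^{1+\alpha}$ dynamically defined Cantor sets and the routine computation already carried out in the lines preceding the statement.
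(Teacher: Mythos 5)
Your proof is correct and takes essentially the same approach as the paper: Step 3 is precisely the chain of inequalities written in the text immediately preceding the lemma statement (the lemma is just a restatement of what that chain establishes), and your Steps 1 and 2 merely fill in the covering and limit-capacity facts that the paper asserts without elaboration ("thus, $\{I^u(\beta):\beta\in\mathcal{B}_M(\Lambda^n)\}$ is a covering of $\pi^u(\Lambda^n)$..." and "$M_n$ is big"). The itinerary-chunking argument you give in Step 1 is the right way to make the covering claim precise, relying on $\psi_u$-forward-invariance of $\pi^u(\Lambda^n)$ and the inclusion $\mathcal{C}_u(\Lambda^n,r_0)\subset\mathcal{C}_u(\Lambda_{t_0},r_0)=\mathcal{B}_0$ coming from $\Lambda^n\subset\Lambda_{t_0}$.
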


Remember that $f\in\mathcal{R}_{\varphi, \Lambda}$ where $\mathcal{R}_{\varphi, \Lambda}$ was defined in Section \ref{pre} above. Then, we can suppose, unless refining the initial Markov partition $\{R_a\}_{a\in\mathcal{A}}$, that the restriction of $f$ to each of the intervals $\{i_a^s\}\times I_a^u$, $a\in\mathcal{A}$, is strictly monotone and, furthermore, for some constant $c_4>0$, the following estimates hold 
\begin{eqnarray}\label{e.c1}
|f(\underline{\theta}^{(1)};a_1\dots a_n a_{n+1}\underline{\theta}^{(3)})-f(\underline{\theta}^{(1)};a_1\dots a_n a_{n+1}'\underline{\theta}^{(4)})| > c_4\cdot |I^u(a_1\dots a_n)| \\
|f(\underline{\theta}^{(1)}a_{m+1} a_m\dots ;a_1\underline{\theta}^{(3)})-f(\underline{\theta}^{(2)}a_{m+1}' a_m\dots ;a_1\underline{\theta}^{(3)})| > c_4\cdot |I^s(a_1\dots a_m)| \nonumber
\end{eqnarray}
whenever $a_{n+1}\neq a_{n+1}'$, $a_{m+1}\neq a_{m+1}'$ and $\underline{\theta}^{(1)}, \underline{\theta}^{(2)}\in\mathcal{A}^{\mathbb{Z}^-}$, $\underline{\theta}^{(3)}, \underline{\theta}^{(4)}\in\mathcal{A}^{\mathbb{N}}$ are admissible. 

Moreover, we observe that, since $f\in C^2$, there exists $c_5>0$ such that we also have the following estimates:
\begin{eqnarray}\label{e.c2}
|f(\underline{\theta}^{(1)};a_1\dots a_n a_{n+1}\underline{\theta}^{(3)})-f(\underline{\theta}^{(1)};a_1\dots a_n a_{n+1}'\underline{\theta}^{(4)})| < c_5 \cdot |I^u(a_1\dots a_n)| \\
|f(\underline{\theta}^{(1)}a_{m+1} a_m\dots ;a_1\underline{\theta}^{(3)})-f(\underline{\theta}^{(2)}a_{m+1}' a_m\dots ;a_1\underline{\theta}^{(3)})| < c_5 \cdot |I^s(a_1\dots a_m)| \nonumber
\end{eqnarray}   
whenever $a_{n+1}\neq a_{n+1}'$, $a_{m+1}\neq a_{m+1}'$ and $\underline{\theta}^{(1)}, \underline{\theta}^{(2)}\in\mathcal{A}^{\mathbb{Z}^-}$, $\underline{\theta}^{(3)}, \underline{\theta}^{(4)}\in\mathcal{A}^{\mathbb{N}}$ are admissible.

Next, we give a definition

\begin{definition}\label{good-position} Given $n\in \mathcal{I}$, $ M\in \mathbb{N}$ and $\beta=\beta_1\dots\beta_M \in \mathcal{B}_M(\Lambda^n) $ with $\beta_i\in\mathcal{B}_0$ for all $1\leq i\leq M$, we say that $j\in\{1,\dots,M\}$ is a \emph{M-right-good position} of $\beta$ if there are two elements of $\mathcal{B}_M(\Lambda^n)$
$$\beta^{(p)}=\beta_1\dots\beta_{j-1}\beta_j^{(p)}\dots\beta_M^{(p)}, \quad p=1, 2$$
with  $\beta_i^{(p)}\in\mathcal{B}_0$ for all $j\leq i\leq M, \ p=1, 2$ and  such that $\sup I^u(\beta_j^{(1)})<\inf I^u(\beta_j)< \sup I^u(\beta_j)<\inf I^u(\beta_j^{(2)})$, i.e., the interval $I^u(\beta_j)$ is located between $I^u(\beta_j^{(1)})$ and $I^u(\beta_j^{(2)})$. 

Similarly, we say that $j\in\{1,\dots, M\}$ is a \emph{M-left-good position} of $\beta$ if there are two elements of $\mathcal{B}_M(\Lambda^n)$
$$\beta^{(p)}=\beta_1^{(p)}\dots\beta_j^{(p)}\beta_{j+1}\dots\beta_M, \quad p=3, 4$$
with  $\beta_i^{(p)}\in\mathcal{B}_0$ for all $1\leq i\leq j, \ p=3, 4$ such that $\sup I^s((\beta_j^{(3)})^T)<\inf I^s(\beta_j^T)< \sup I^s(\beta_j^T)<\inf I^s((\beta_j^{(4)})^T)$, i.e., the interval $I^s(\beta_j^T)$ is located between $I^s((\beta_j^{(3)})^T)$ and $I^s((\beta_j^{(4)})^T)$.

Finally, we say that $j\in\{1,\dots, M\}$ is a \emph{M-good position} of $\beta$ if it is both a M-right-good and a M-left-good position of $\beta$.
\end{definition}
The bounded distortion property (Equation (\ref{bdp2})) let us fix $J\in \mathbb{N}$ big enough such that for $\beta_1\beta_2\dots\beta_J$ and $\beta_{J+1}\beta_{J+2}$ admissible with $\beta_1,\beta_2,\dots,\beta_J,\beta_{J+1},\beta_{J+2} \in\mathcal{B}_0=\mathcal{C}_u(\Lambda_{t_0},r_0)$ 
$$|I^u(\beta_1\beta_2\dots\beta_J)|\leq |I^s((\beta_{J+1}\beta_{J+2})^T)|$$
and 
$$|I^s((\beta_1\beta_2\dots\beta_J)^T)|\leq |I^u(\beta_{J+1}\beta_{J+2})|.$$

Set $k:=8J N_0^2$ (observe that $k$ does not depend on $n$). The next lemma says that most positions of some word of $\mathcal{B}_{5N_n k}(\Lambda^{n})$ are $5N_n k$-good.

\begin{lemma}\label{good} For $N_n$ big enough, there exists $\beta_n \in \mathcal{B}_{5N_n k}(\Lambda^{n})$ such that the number of  $5N_n k$-good positions of  $\beta_n$ is greater or equal than $49N_n k/10$.

\end{lemma}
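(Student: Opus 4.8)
The plan is to imitate the counting behind the combinatorial lemmas 2.13--2.15 of \cite{CMM16}, now carried out directly in the alphabet $\mathcal{B}_0=\mathcal{C}_u(\Lambda_{t_0},r_0)$: one compares the size of $\mathcal{B}_{M}(\Lambda^n)$, bounded below by Lemma \ref{Btam}, against the number of words in $\mathcal{B}_M(\Lambda^n)$ having too many positions that fail Definition \ref{good-position}. Here $k=8JN_0^2$ is a fixed constant and we set $M:=5N_nk$; the argument only needs $M$ large, and "$N_n$ big enough" is precisely what makes Lemma \ref{Btam} applicable, so that $|\mathcal{B}_M(\Lambda^n)|\geq N_0^{\frac{991}{1000}M}$.

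First I would record the combinatorial principle. Call $j\in\{1,\dots,M\}$ \emph{right-bad} (resp.\ \emph{left-bad}) for $\beta\in\mathcal{B}_M(\Lambda^n)$ if it is not an $M$-right-good (resp.\ $M$-left-good) position; a position is non-good iff it is right-bad or left-bad. Whether $j$ is right-good for $\beta$ depends only on the prefix $(\beta_1,\dots,\beta_j)$: given $\beta_1\dots\beta_{j-1}$, let $V$ be the set of $b\in\mathcal{B}_0$ such that $\beta_1\dots\beta_{j-1}b$ extends to a word of $\mathcal{B}_M(\Lambda^n)$; the intervals $\{I^u(b):b\in V\}$ are pairwise disjoint, hence linearly ordered, so $V$ has a leftmost and a rightmost element, and $\beta_j$ is right-bad exactly when it is one of these two (for any non-extremal $b\in V$, completing some element of $V$ strictly to its left and some element strictly to its right to words of $\mathcal{B}_M(\Lambda^n)$ supplies the $\beta^{(1)},\beta^{(2)}$ of Definition \ref{good-position}). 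Thus, given the prefix, at most two choices of $\beta_j$ make $j$ right-bad. Building words from left to right --- choosing an extremal $\beta_j$ on a prescribed set of positions and any block of $\mathcal{B}_0$ elsewhere, and observing that every $\beta\in\mathcal{B}_M(\Lambda^n)$ with at least $\ell$ right-bad positions is produced this way for some $\ell$-subset of its right-bad set --- yields
$$\#\{\beta\in\mathcal{B}_M(\Lambda^n):\ \beta\text{ has at least }\ell\text{ right-bad positions}\}\ \le\ \binom{M}{\ell}\,2^{\ell}\,N_0^{M-\ell}.$$
The mirror estimate for left-bad positions follows by building words from right to left and ordering the admissible $j$-th blocks by the stable intervals $I^s(b^T)$, i.e.\ by exchanging the roles of $K^u,K^s$ and of $\psi_u,\psi_s$.

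Then I would combine the two bounds. If $\beta$ has more than $M/50$ non-good positions, it has more than $M/100$ right-bad positions or more than $M/100$ left-bad positions, so with $\ell=\lceil M/100\rceil$,
$$\#\bigl\{\beta\in\mathcal{B}_M(\Lambda^n):\ \beta\text{ has more than }M/50\text{ non-good positions}\bigr\}\ \le\ 2\binom{M}{\ell}2^{\ell}N_0^{M-\ell}.$$
Using the crude bound $2\binom{M}{\lceil M/100\rceil}2^{\lceil M/100\rceil}\le 2^{M/10}$ for $M$ large, and recalling that $r_0$ was chosen with $N_0=N_u(\Lambda_{t_0},r_0)>2^{2023}$, the right-hand side is at most $2^{M/10}N_0^{0.99M}<N_0^{\frac{991}{1000}M}\le|\mathcal{B}_M(\Lambda^n)|$, since $N_0^{0.001M}>2^{2.023M}>2^{M/10}$. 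Hence some $\beta_n\in\mathcal{B}_M(\Lambda^n)$ has at most $M/50$ non-good positions, i.e.\ at least $M-M/50=\tfrac{49}{50}M=\tfrac{49N_nk}{10}$ positions that are $5N_nk$-good, which is the assertion.

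The step I expect to require the most care is the ``two extremal choices'' principle. One must check, directly from Definition \ref{good-position}, that for a fixed prefix $\beta_1\dots\beta_{j-1}$ every $\beta_j$ that is neither the leftmost nor the rightmost admissible $j$-th block is $M$-right-good for every word of $\mathcal{B}_M(\Lambda^n)$ extending it --- the key being that Definition \ref{good-position} allows the comparison words $\beta^{(1)},\beta^{(2)}$ to be \emph{arbitrary} elements of $\mathcal{B}_M(\Lambda^n)$ sharing that prefix, so that a non-extremal block does realize the strict nesting $\sup I^u(\beta_j^{(1)})<\inf I^u(\beta_j)<\sup I^u(\beta_j)<\inf I^u(\beta_j^{(2)})$ --- together with the symmetric statement on the stable side, using the disjointness of the intervals $I^s(b^T)$. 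Once this is in place, the rest is a routine union bound over subsets of positions, and the final inequality holds with large slack only because $N_0$ is enormous.
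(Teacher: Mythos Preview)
Your proof is correct and follows essentially the same route as the paper's own argument: both count words with too many right-bad (resp.\ left-bad) positions via the ``at most two extremal choices'' observation, sum over position sets, and compare against the lower bound from Lemma~\ref{Btam} using $N_0>2^{2023}$. The paper uses the cruder bound $2^{5N_nk}$ in place of your $\binom{M}{\lceil M/100\rceil}$ and phrases the thresholds as $N_nk/20$ and $N_nk/10$ rather than $M/100$ and $M/50$, but since $M=5N_nk$ these are identical.
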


\begin{proof}
Let us first estimate the cardinality of the subset of $\mathcal{B}_{5N_n k}(\Lambda^{n})$ consisting of words $\beta$ such that at least $N_n k/20$ positions are not  $5N_n k$-right-good: Once we fix a set of $m\geq N_n k/20$, $5N_n k$-right-bad (i.e., not  $5N_n k$-right-good) positions, if $j$ is a  $5N_n k$-right-bad position and $\beta_1, \dots, \beta_{j-1}\in\mathcal{B}_0$ were already chosen, then by definition, it follows that there are at most two options for $\beta_j\in\mathcal{B}_0$ which correspond to the leftmost and rightmost subintervals of $I^u(\beta_1\dots\beta_{j-1})$ of the form $I^u(\beta_1\dots\beta_{5N_n k})$ intersecting $\pi^u(\Lambda^n)$.

In particular, once a set of $m\geq N_n k/20$,    $5N_n k$-right-bad positions is fixed, the quantity of words in $\mathcal{B}_{5N_n k}(\Lambda_{n})$ with this set of $m$,  $5N_n k$-right-bad positions is less than or equal to  
$$2^m\cdot N_0^{5N_n k-m}\leq 2^{N_n k/20} \cdot N_0^{99N_n k/20}.$$
Therefore, the quantity of words in $\mathcal{B}_{5N_n k}(\Lambda^{n})$ with at least $N_n k/20$,  $5N_n k$-right-bad positions is less than or equal to
$$2^{5N_n k}\cdot 2^{N_n k/20}\cdot N_0^{99N_n k/20} = 2^{101N_n k/20}\cdot N_0^{99N_n k/20}.$$

Analogously, the quantity of words in $\mathcal{B}_{5N_n k}(\Lambda^{n})$ with at least $N_n k/20$,  $5N_n k$-left-bad positions is bounded by $2^{101N_n k/20}\cdot N_0^{99N_n k/20}$.

It follows that the set of words $\beta \in \mathcal{B}_{5N_n k}(\Lambda_{n})$ with at least $N_n k/10$, 
$5N_n k$-bad (i.e., not  $5N_n k$-good) positions has cardinality less or equal than $2.2^{101N_n k/20}\cdot N_0^{99N_n k/20}$.

Since $\abs{\mathcal{B}_{5N_n k}(\Lambda^{n})}>N_0^{991N_n k/200}$ (by Lemma \ref{Btam}) and $2^{1+101N_n k/20}\cdot N_0^{99N_n k/20}<N_0^{991N_n k/200}$ (from our choices of $r_0$, $N_0$ large), we have that there exists some $\beta_n \in \mathcal{B}_{5N_n k}(\Lambda^{n})$ with less than $N_n k/10$, $5N_n k$-bad positions. That is, with at least $5N_n k-N_n k/10=49N_n k/10$ good positions. 
\end{proof}

Given $n\in \mathcal{I}$ take $N_n$ big enough as in Lemma \ref{good} and such that for two elements $x,y\in \Lambda$ if their kneading sequences coincide in the central block (centered at the zero position) of size $2N_n+1$ then $\abs{f(x)-f(y)}<\eta_n/2$ where $\Lambda^s(t_n)\cup \Lambda^u(t_n) \subset \Lambda_{t_n-{\eta_n}}$.
 
The next proposition shows that the notion of good positions allows us to have some control over the values that $f$ takes in some rectangles. 

\begin{proposition}\label{control}
If $\beta_n=\beta^n_1\beta^n_2\dots\beta^n_{5N_nk}$ with $\beta^n_r\in \mathcal{B}_0$ for $i=1, \dots, 5N_nk$ is as in the previous lemma and for some $1<i<j<5N_nk$, the positions $i-1,i,j,j+1$ are $5N_nk$-good positions of $\beta_n$ and $j-i \geq J$. Then for each $i\leq s \leq j$ and $\bar{n} \in P(\beta^n_s)$ if $\eta=\beta^n_{i-1}\beta^n_{i}\dots\beta^n_j\beta^n_{j+1}$ and $x\in R(\eta;P(\eta,s;\bar{n}))\cap \Lambda$ we have $f(x)<t_n$.
\end{proposition}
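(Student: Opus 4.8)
The plan is to compute $f(x)$ by locating $x$, in the product coordinates of the (already refined) Markov rectangle $R_{a_0}$ containing it, inside a ``box'' whose four corners lie in $\Lambda^n$, and then to invoke $\Lambda^n\subset\Lambda_{t_n-\eta_n}$ together with the leafwise monotonicity of $f$.

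First I would set up coordinates. Since $x\in R(\eta;P(\eta,s;\bar n))\cap\Lambda$ and $a_0$ denotes the $\bar n$-th letter of $\beta^n_s$, the kneading sequence of $x$ has $x_0=a_0$, its forward part follows $(\beta^n_s)_{\bar n}\cdots(\beta^n_s)_{|\beta^n_s|}\beta^n_{s+1}\cdots\beta^n_j\beta^n_{j+1}$ and is otherwise free, and its backward part follows $\beta^n_{i-1}\beta^n_i\cdots\beta^n_{s-1}(\beta^n_s)_1\cdots(\beta^n_s)_{\bar n-1}$ (read towards $0$) and is free before $\beta^n_{i-1}$. Thus $\pi^u(x)$ lies in $I^u$ of the forward window word and $\pi^s(x)$ in $I^s$ of the transpose of the backward window word. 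Because $f\in\mathcal{R}_{\varphi,\Lambda}$, the set where $\grad f$ is off the perpendiculars to $e^s$ and $e^u$ is an open neighbourhood of $\Lambda$; having refined the Markov partition so each $R_a$ lies in it, $f$ is strictly monotone along every stable and every unstable leaf of $R_{a_0}$.

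Next I would trap $\pi^u(x)$ and $\pi^s(x)$. Since $j+1$ is a right-good position of $\beta_n$, there are $\beta^{(1)},\beta^{(2)}\in\mathcal{B}_{5N_nk}(\Lambda^n)$ with prefix $\beta^n_1\cdots\beta^n_j$ whose $(j+1)$-th blocks flank $\beta^n_{j+1}$; choosing $z^+_q\in\Lambda^n$ with $\pi^u(z^+_q)\in I^u(\beta^n_1\cdots\beta^n_j\beta^{(q)}_{j+1})$ and setting $\ell:=|\beta^n_1\cdots\beta^n_{s-1}|+\bar n-1$, the points $\varphi^\ell(z^+_q)$ lie in $\Lambda^n\cap R_{a_0}$ and, as $\psi_u^\ell$ maps $I^u(\beta^n_1\cdots\beta^n_j)$ homeomorphically onto $I^u((\beta^n_s)_{\bar n}\cdots(\beta^n_s)_{|\beta^n_s|}\beta^n_{s+1}\cdots\beta^n_j)$ and homeomorphisms preserve the betweenness of three subintervals, their unstable projections $u_1,u_2$ flank $\pi^u(x)$. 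Symmetrically, using that $i-1$ is a left-good position, that a finite word realizable as a forward itinerary of the subshift of finite type $\Lambda^n$ is, by shift-invariance, realizable as a backward itinerary as well (so $\gamma^{(p)}_{i-1}\beta^n_i\cdots\beta^n_{s-1}(\beta^n_s)_1\cdots(\beta^n_s)_{\bar n}$ is the backward itinerary, ending at $0$, of some point of $\Lambda^n\cap R_{a_0}$), together with the $\psi_s$-invariance of $\pi^s(\Lambda^n)$ and the $\psi_s$-analogue of the homeomorphism/betweenness argument, I obtain $s_3,s_4\in\pi^s(\Lambda^n\cap R_{a_0})$ flanking $\pi^s(x)$. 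Only $1<i\le s\le j$, $j+1\le 5N_nk$, left-goodness of $i-1$ and right-goodness of $j+1$ are used here; the other hypotheses ($j-i\ge J$, goodness of $i$ and $j$) are needed only for the later applications of this proposition.

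Finally I would conclude. By the local product structure of $\Lambda^n$, the four points $w_{pq}\in R_{a_0}$ with $\pi^s(w_{pq})=s_p$, $\pi^u(w_{pq})=u_q$ ($p\in\{3,4\}$, $q\in\{1,2\}$) belong to $\Lambda^n\subset\Lambda_{t_n-\eta_n}$, so $f(w_{pq})\le t_n-\eta_n$. Since $\pi^s(x)\in[s_3,s_4]$ and $\pi^u(x)\in[u_1,u_2]$, moving first along the unstable leaf of $R_{a_0}$ through $x$ and then along a stable leaf (all intermediate points lying in $\Lambda\cap R_{a_0}$, where $f$ is leafwise monotone) gives $f(x)\le\max_{p,q}f(w_{pq})\le t_n-\eta_n<t_n$. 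I expect the delicate step to be the stable half of the trapping: the good-position data is encoded through the unstable alphabet $\mathcal{B}_u$ and the cylinders $I^u(\cdot)$, so turning left-goodness of $i-1$ into an honest flanking statement inside $\pi^s(\Lambda^n\cap R_{a_0})$ requires the subshift-realizability of $\Lambda^n$, the $\psi_s$-invariance of $\pi^s(\Lambda^n)$, and bounded distortion, all applied so as to carry the block-level sandwiching through the intervening blocks without losing strict separation, while keeping every auxiliary point inside $R_{a_0}\cap\Lambda$ so that the monotonicity of $f$ along leaves genuinely applies.
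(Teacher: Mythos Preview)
Your four–corners monotonicity argument is an appealing idea and genuinely different from the paper's route, but it has a real gap at the step ``by the local product structure of $\Lambda^n$, the four points $w_{pq}$ \dots\ belong to $\Lambda^n$.'' The subhorseshoe $\Lambda^n=\Lambda(\Sigma(\mathcal{B}_u^{(n)}))$ is a complete subshift on an alphabet of \emph{long} words; its local product structure only yields $[x,y]\in\Lambda^n$ when $x,y$ agree on a window whose size is comparable to a $\mathcal{B}_u^{(n)}$-word, not merely when $x,y$ lie in the same $R_{a_0}$. Your $u_q$ comes from a point $z^+_q\in\Lambda^n$ (right-goodness of $j{+}1$) and your $s_p$ from a different point $y^-_p\in\Lambda^n$ (left-goodness of $i{-}1$). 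Their $\mathcal{B}_u^{(n)}$-block decompositions have no reason to align at the splice position, so the glued itinerary of $w_{pq}$ need not be a $\mathcal{B}_u^{(n)}$-concatenation. Concretely, nothing in Definition~\ref{good-position} says that the combined word $\gamma^{(p)}_1\cdots\gamma^{(p)}_{i-1}\beta^n_i\cdots\beta^n_j\beta^{(q)}_{j+1}\cdots\beta^{(q)}_{5N_nk}$ still meets $\pi^u(\Lambda^n)$: the two flanking modifications are granted separately, not simultaneously. Hence $w_{pq}\in\Lambda^n$ is unproved, and without it you have no bound on $f(w_{pq})$.

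This is exactly why the paper does something subtler. Instead of two-sided trapping, it compares $x$ to a \emph{single} witness in $\Lambda^n$, modifying only one end: monotonicity (via estimate~\eqref{e.c1}) gains a definite amount $c_4|I^u(\cdots)|$ (or $c_4|I^s(\cdots)|$) in the ``long'' direction, and the uncontrolled change at the other, ``short'' end is absorbed by the Lipschitz bound~\eqref{e.c2}. The hypotheses you set aside as ``needed only for later applications'' --- goodness of $i$ and $j$, and $j-i\ge J$ --- are precisely what make this size comparison work (the first pair handles positions inside $\beta^n_{i+1}\cdots\beta^n_{j-1}$, the second handles positions inside $\beta^n_i$ and $\beta^n_j$). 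If your four-corner scheme were valid these hypotheses would indeed be superfluous, which is itself a warning sign.
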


\begin{proof}
The arguments are similar to those of \cite[Proposition 2.9]{CMM16}. Consider $\underline{\theta}^{(2)}\in\mathcal{A}^{\mathbb{N}}$ and $\underline{\theta}^{(1)}\in\mathcal{A}^{\mathbb{Z}^-}$ such that $\underline{\theta}^{(1)}\beta^n_{i-1};\beta^n_i\beta^n_{i+1}\dots\beta^n_{j-1}\beta^n_j\beta^n_{j+1}\underline{\theta}^{(2)}\in \Sigma_{\mathcal{B}}$. With this notation, our task is equivalent to show that
\begin{equation}\label{e.decreasing-f}f(\sigma^{\ell}(\underline{\theta}^{(1)}\beta^n_{i-1};\beta^n_i\beta^n_{i+1}\dots\beta^n_{j-1}\beta^n_j\beta^n_{j+1}\underline{\theta}^{(2)}))< t_n
\end{equation}
for all $0\leq \ell\leq m_1+m+m_2-1$ where $\beta^n_i=a_1\dots a_{m_1}$, $\beta^n_{i+1}\dots\beta^n_{j-1}=b_{1}\dots b_{m}$ and $\beta^n_j=d_{1}\dots d_{m_2}$.

First we deal with positions of the word $\beta^n_{i+1}\beta^n_{i+2}\dots\beta^n_j\beta^n_{j-1}$, that is, we consider $m_1\leq \ell\leq m_1+m-1.$ Write $\ell=m_1-1+r$ so that
\begin{equation}\label{e.sigma-l-B}\sigma^{\ell}(\underline{\theta}^{(1)}\beta^n_{i-1};\beta^n_i\beta^n_{i+1}\dots\beta^n_{j-1}\beta^n_j\beta^n_{j+1}\underline{\theta}^{(2)}) = \underline{\theta}^{(1)}\beta^n_{i-1}\beta^n_i b_1\dots b_{r-1}; b_r\dots b_{m}\beta^n_j\beta^n_{j+1}\underline{\theta}^{(2)}
\end{equation}
and also suppose that $|I^s((\beta^n_i b_1\dots b_{r-1})^T)|\leq |I^u(b_r\dots b_{m}\beta^n_j)|$ (the conclusion when $|I^u(b_r\dots b_{m}\beta^n_j)|<|I^s((\beta^n_i b_1\dots b_{r-1})^T)|$ follows similarly).

By definition of $5N_nk$-good position, we have 
$$\sup I^s((\beta_i')^T)<\inf I^s((\beta^n_i)^T)< \sup I^s((\beta^n_i)^T)<\inf I^s((\beta_i'')^T)$$
and
$$\sup I^u(\beta_j ')<\inf I^u(\beta^n_j)< \sup I^u(\beta^n_j)<\inf I^u(\beta_j ''),$$
for some words $\beta_i',\beta_i'',\beta_j',\beta_j''\in\mathcal{B}_0$ verifying 
$$I^u(\beta_i'\beta^n_{i+1}\dots\beta^n_{j-1}\beta^n_j\beta^n_{j+1})\cap \pi^u(\Lambda^n)\neq\emptyset, \quad I^u(\beta_i''\beta^n_{i+1}\dots\beta^n_{j-1}\beta^n_j\beta^n_{j+1})\cap \pi^u(\Lambda^n)\neq\emptyset,$$ 
$$I^u(\beta^n_{i-1}\beta^n_i\beta^n_{i+1}\dots\beta^n_{j-1}\beta_j')\cap \pi^u(\Lambda^n)\neq\emptyset, \quad I^u(\beta^n_{i-1}\beta^n_i\beta^n_{i+1}\dots\beta^n_{j-1}\beta_j'')\cap \pi^u(\Lambda^n)\neq\emptyset.$$

Choose $\beta_j^{\ast}\in\{\beta_j', \beta_j''\}$ such that 
$$f(\underline{\theta}^{(1)}\beta^n_{i-1}\beta^n_i b_1\dots b_{r-1}; b_r\dots b_{m}\beta^n_j\beta^n_{j+1}\underline{\theta}^{(2)})<f(\underline{\theta}^{(1)}\beta^n_{i-1}\beta^n_i b_1\dots b_{r-1}; b_r\dots b_{m}\beta_j^{\ast}\underline{\theta}^{(4)})$$
for any $\underline{\theta}^{(4)}\in\mathcal{A}^{\mathbb{N}}$. By (\ref{e.c1}), it follows that  
\begin{eqnarray*}
& &f(\underline{\theta}^{(1)}\beta^n_{i-1}\beta^n_i b_1\dots b_{r-1}; b_r\dots b_{m}\beta^n_j\beta^n_{j+1}\underline{\theta}^{(2)}) + c_4|I^u(b_r\dots b_{m}\beta^n_j)| \\ & &< f(\underline{\theta}^{(1)}\beta^n_{i-1}\beta^n_i b_1\dots b_{r-1}; b_r\dots b_{m}\beta_j^{\ast}\underline{\theta}^{(4)}).
\end{eqnarray*}
On the other hand, by (\ref{e.c2}), we also know that, for any $\underline{\theta}^{(3)}\in\mathcal{A}^{\mathbb{Z}^-}$
\begin{eqnarray*}
& & |f(\underline{\theta}^{(3)}\beta^n_{i-1}\beta^n_i b_1\dots b_{r-1}; b_r\dots b_{m}\beta_j^{\ast}\underline{\theta}^{(4)}) - f(\underline{\theta}^{(1)}\beta^n_{i-1}\beta^n_i b_1\dots b_{r-1}; b_r\dots b_{m}\beta_j^{\ast}\underline{\theta}^{(4)})| \\ & & < c_5 |I^s((\beta^n_{i-1}\beta^n_i b_1\dots b_{r-1})^T)|
\end{eqnarray*}
From these estimates, we obtain that 
$$f(\underline{\theta}^{(1)}\beta^n_{i-1}\beta^n_i b_1\dots b_{r-1}; b_r\dots b_{m}\beta^n_j\beta^n_{j+1}\underline{\theta}^{(2)}) + c_4|I^u(b_r\dots b_{m}\beta^n_j)|<$$
$$f(\underline{\theta}^{(3)}\beta^n_{i-1}\beta^n_i b_1\dots b_{r-1}; b_r\dots b_{m}\beta_j^{\ast}\underline{\theta}^{(4)}) + c_5 e^{c_1}|I^s((\beta^n_{i-1})^T)|\cdot |I^s((\beta^n_i b_1\dots b_{r-1})^T)|$$
for any $\underline{\theta}^{(3)}\in\mathcal{A}^{\mathbb{Z}^-}$ and $\underline{\theta}^{(4)}\in\mathcal{A}^{\mathbb{N}}$. 

Since we are supposing that $|I^s((\beta^n_i b_1\dots b_{r-1})^T)|\leq |I^u(b_r\dots b_{m}\beta^n_j)|$, we conclude
$$f(\underline{\theta}^{(1)}\beta^n_{i-1}\beta^n_i b_1\dots b_{r-1}; b_r\dots b_{m}\beta^n_j\beta^n_{j+1}\underline{\theta}^{(2)})<$$
$$f(\underline{\theta}^{(3)}\beta^n_{i-1}\beta^n_i b_1\dots b_{r-1}; b_r\dots b_{m}\beta_j^{\ast}\underline{\theta}^{(4)}) - (c_4- c_5 e^{c_1} |I^s((\beta^n_{i-1})^T)|)\cdot |I^u(b_r\dots b_{m}\beta^n_j)|.$$

Next, we note that if $r_0\in\mathbb{N}$ is sufficiently large, $c_5 e^{c_1}.|I^s((\beta^n_{i-1})^T)|<c_4/2$ . In particular, we have that 
\begin{eqnarray}\label{e.decreasing-f-caseIa}f(\underline{\theta}^{(1)}\beta^n_{i-1}\beta^n_i b_1\dots b_{r-1}; b_r\dots b_{m}\beta^n_j\beta^n_{j+1}\underline{\theta}^{(2)})< \\
f(\underline{\theta}^{(3)}\beta^n_{i-1}\beta^n_i b_1\dots b_{r-1}; b_r\dots b_{m}\beta_j^{\ast}\underline{\theta}^{(4)}) - (c_4/2)\cdot |I^u(b_r\dots b_{m}\beta^n_j)|\nonumber
\end{eqnarray}
for any $\underline{\theta}^{(3)}\in\mathcal{A}^{\mathbb{Z}^-}$ and $\underline{\theta}^{(4)}\in\mathcal{A}^{\mathbb{N}}$. 

Now, we recall that as $\beta_j^{\ast}\in\{\beta_j', \beta_j''\}$, one has 
$I^u(\beta^n_{i-1}\beta^n_i\beta^n_{i+1}\dots\beta^n_{j-1}\beta^*_j)\cap \pi^u(\Lambda^n)\neq\emptyset.$
By definition, this implies that there are $\underline{\theta}^{(3)}_*\in\mathcal{A}^{\mathbb{Z}^-}$ and $\underline{\theta}^{(4)}_*\in\mathcal{A}^{\mathbb{N}}$ with 
$$\underline{\theta}^{(3)}_*;\beta^n_{i-1}\beta^n_i\beta^n_{i+1}\dots\beta^n_{j-1}\beta^*_j\underline{\theta}^{(4)}_*\in\Sigma_{t_n},$$
and, in particular, by (\ref{e.sigma-l-B})
$$f(\sigma^{m_2+\ell}(\underline{\theta}^{(3)}_*;\beta^n_{i-1}\beta^n_ib_1\dots b_m\beta^*_j\underline{\theta}^{(4)}_*))=f(\underline{\theta}^{(3)}_*\beta^n_{i-1}\beta^n_ib_1\dots b_{r-1};b_{r} \dots b_m\beta^*_j\underline{\theta}^{(4)}_*))\leq t_n.$$
Combining this with (\ref{e.decreasing-f-caseIa}), we see that 
$$f(\underline{\theta}^{(1)}\beta^n_{i-1}\beta^n_i b_1\dots b_{r-1}; b_r\dots b_{m}\beta^n_j\beta^n_{j+1}\underline{\theta}^{(2)}) < t_n - (c_4/2)\cdot |I^u(b_r\dots b_{m}\beta^n_j)|$$
and then
\begin{equation}\label{e.delta1-definition}
f(\sigma^{\ell}(\underline{\theta}^{(1)}\beta^n_{i-1};\beta^n_i\beta^n_{i+1}\dots\beta^n_{j-1}\beta^n_j\beta^n_{j+1}\underline{\theta}^{(2)})) < t_n.
\end{equation}

Finally, the case when we deal with positions of the words $\beta^n_{i}$ or $\beta^n_{j}$ is similar with the previous one. We write
\begin{eqnarray*}\label{e.sigma-l-A}
& & \sigma^{\ell}(\underline{\theta}^{(1)}\beta^n_{i-1};\beta^n_i\beta^n_{i+1}\dots\beta^n_{j-1}\beta^n_j\beta^n_{j+1}\underline{\theta}^{(2)}) = \\ & & \underline{\theta}^{(1)}\beta^n_{i-1}a_1\dots a_{\ell};a_{\ell+1} \dots a_{m_1} \beta^n_{i+1}\dots \beta^n_{j-1}\beta^n_j\beta^n_{j+1}\underline{\theta}^{(2)} \nonumber
\end{eqnarray*}
for $0\leq\ell\leq\ m_1-1$, and 
\begin{eqnarray*}\label{e.sigma-l-D}
& & \sigma^{\ell}(\underline{\theta}^{(1)}\beta^n_{i-1};\beta^n_i\beta^n_{i+1}\dots\beta^n_{j-1}\beta^n_j\beta^n_{j+1}\underline{\theta}^{(2)}) = \\ & & \underline{\theta}^{(1)}\beta^n_{i-1}\beta^n_i\beta^n_{i+1}\dots\beta^n_{j-1}d_1 \dots d_{\ell - m_1-m};d_{\ell - m_1-m+1}\dots d_{m_2}\beta^n_{j+1}\underline{\theta}^{(2)} \nonumber
\end{eqnarray*}
for $m_1+m\leq\ell\leq m_1+m+m_2-1$.

Since $j-i\geq J$ and $\beta^n_{i-1},\beta^n_i,\dots,\beta^n_{j-1},\beta^n_{j}\in\mathcal{B}_0=\mathcal{C}_{u}(\Lambda_{t_0},r_0)$, it follows from our choice of $J$ that 
$$|I^u(a_{\ell+1} \dots a_{m_1} \beta^n_{i+1}\dots \beta^n_{j-1}\beta^n_j)|\leq |I^s((\beta^n_{i-1}a_1\dots a_{\ell})^T)|$$
for $0\leq\ell\leq m_1-1$, and 
$$|I^s((\beta^n_i\beta^n_{i+1}\dots\beta^n_{j-1}d_1 \dots d_{\ell - m_1-m})^T)|\leq |I^u(d_{\ell - m_1-m+1}\dots d_{m_2}\beta^n_{j+1})|$$
for  $m_1+m\leq\ell\leq m_1+m+m_2-1$. Arguing as before, one deduces that 
\begin{equation}\label{e.delta3-definition}
f(\sigma^{\ell}(\underline{\theta}^{(1)}\beta^n_{i-1};\beta^n_i\beta^n_{i+1}\dots\beta^n_{j-1}\beta^n_j\beta^n_{j+1}\underline{\theta}^{(2)}))<t_n-(c_4/2)\cdot|I^s((\beta^n_{i-1}a_1\dots a_{\ell})^T)|<t_n
\end{equation}
for $0\leq\ell\leq m_1-1$, and 
\begin{equation}\label{e.delta4-definition}
f(\sigma^{\ell}(\underline{\theta}^{(1)}\beta^n_{i-1};\beta^n_i\beta^n_{i+1}\dots\beta^n_{j-1}\beta^n_j\beta^n_{j+1}\underline{\theta}^{(2)}))<t_n-(c_4/2)\cdot|I^u(d_{\ell - m_1-m+1}\dots d_{m_2}\beta^n_{j+1})|<t_n
\end{equation}
for $m_1+m\leq\ell\leq m_1+m+m_2-1.$

In summary, from (\ref{e.delta1-definition}), (\ref{e.delta3-definition}), and (\ref{e.delta4-definition}) we deduce that (\ref{e.decreasing-f}) holds, as we wanted to see.
\end{proof}

 Consider $\beta_n=\beta^n_1\beta^n_2\dots\beta^n_{5N_n k}$ and divide its position set $I=\{1,2, \dots, 5N_n k \}$ in positions packages of size $N_n k$. In the central package  $I^*=\{2N_n k+1, 2N_n k+2, \dots, 3N_n k \}$, the number of $5N_n k$-bad positions is less than $5N_nk-49N_nk/10=N_n k/10$ and then subdividing that package now in $N_n$ package of positions of size $k$ we can find some package of size $k$ with less than $k/10$, $5N_n k$-bad positions, said 
 $$I^{**}=\{2N_nk+sk+1, 2N_n k+sk+2, \dots, 2N_n k+(s+1)k \} \ \mbox{for some} \ 0\leq s < N_n.$$

\begin{figure}[ht]
\centering
\includegraphics[width=1.0 \textwidth]{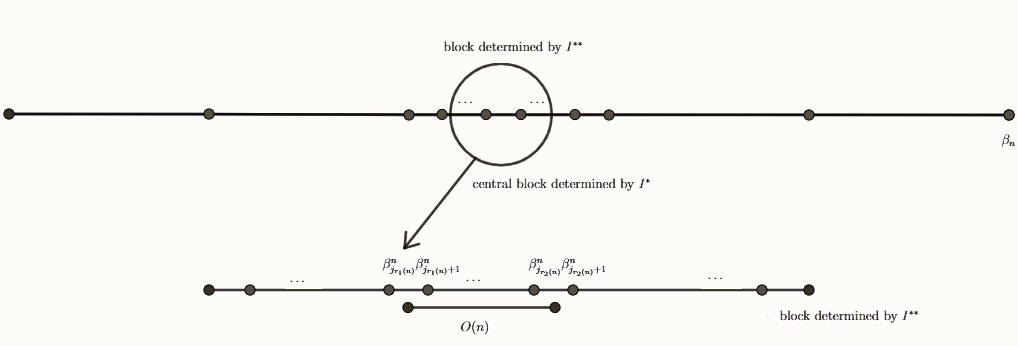}
\caption{Construction of $O(n)$.}
\end{figure}
 
Then we can find $\lceil2k/5\rceil$ positions 
$$2N_nk+sk+1\leq i_1\leq \dots \leq i_{\lceil2k/5\rceil}< 2N_n k+(s+1)k$$
such that $i_{r+1}\geq i_r+2$ for all $1\leq r<\lceil 2k/5\rceil$ and the positions $i_1, i_1+1, \dots, i_{\lceil2k/5\rceil},\\ i_{\lceil2k/5\rceil}+1$ are $5N_nk$-good. 

Since we took $k=8JN_0^2$, it makes sense to set  
$$j_r=i_{rJ} \quad \textrm{ for } 1\leq r\leq 3N_0^2$$
because $3JN_0^2<(16/5)JN_0^2= 2k/5$. In this way, we obtain positions such that 
$$j_{r+1}-j_r\geq 2J  \quad \textrm{ for } 1\leq r\leq 3N_0^2$$
and $j_1, j_1+1, \dots, j_{3N_0^2}, j_{3N_0^2}+1$ are $5N_nk$-good positions.

Since for $1\leq r\leq 3N_0^2$ the number of options for $(\beta^n_{j_r},\beta^n_{j_r+1})$ is at most $N_0^2$, we conclude that for some different $1\leq r_1(n),r_2(n)\leq 3N_0^2$ one has 
 $$(\beta^n_{j_{r_1(n)}},\beta^n_{j_{r_1(n)}+1})=(\beta^n_{j_{r_2(n)}},\beta^n_{j_{r_2(n)}+1})$$ 
then, we can define the following map:
\begin{eqnarray*}
   O: \mathcal{I} &\rightarrow& \bigcup\limits_{j=2}^{k-1}\mathcal{B}_0^j \\
   n &\rightarrow& \beta^n_{j_{r_1(n)}+1}\beta^n_{j_{r_1(n)}+2}\dots \beta^n_{j_{r_2(n)}}
\end{eqnarray*}

Next, we see that if for some $m,n\in \mathcal{I}$ we have $O(m)=O(n)$ then it is possible to go from $\Lambda^{m}$ to $\Lambda^{n}$ without leaving $\Lambda_{\max\{t_n,t_m\}}$ and staying arbitrarily close of the orbit of the periodic point $p=\Pi^{-1}(\overline{O(m)})$ for times arbitrarily big. More precisely, we have the following result
\begin{proposition}\label{m,n}
Take $m,n\in \mathcal{I}$ such that $O(m)=O(n)$. Then given $N\in \mathbb{N}$ and $\epsilon>0$ there exist some $x=x(N,\epsilon)\in W^u(\Lambda^m)\cap W^s(\Lambda^n)$ and $\overline{m}=\overline{m}(N,\epsilon)\in \mathbb{N}$ such that for $\overline{m} \leq i \leq \overline{m}+N$, $d(\mathcal{O}(p),\phi^i(x))<\epsilon$. Even more, we have $m_{\phi,f}(x) < \max\{t_n,t_m\}$.
\end{proposition}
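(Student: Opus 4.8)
The plan is to realize $x$ as a symbolic concatenation: a left--infinite itinerary lying in $\Pi(\Lambda^m)$, then $L$ copies of the word $O(m)=O(n)$, then a right--infinite itinerary lying in $\Pi(\Lambda^n)$, all glued at the good positions $j_r$ produced in the construction of the map $O$, so that Proposition~\ref{control} controls $f$ at every point of the orbit of $x$. \emph{Step 1 (the periodic point).} Since the pigeonhole step defining $O$ gave $(\beta^m_{j_{r_1(m)}},\beta^m_{j_{r_1(m)}+1})=(\beta^m_{j_{r_2(m)}},\beta^m_{j_{r_2(m)}+1})$, the seam of $O(m)O(m)$ is an admissible transition already occurring inside $\beta_m$, so $\overline{O(m)}\in\Sigma_{\mathcal{B}}$ and $p:=\Pi^{-1}(\overline{O(m)})\in\Lambda$ is $\varphi$-periodic. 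For every $i\in\mathbb{Z}$ the point $\varphi^i(p)$ lies in a rectangle $R(\eta;\cdot)$ with $\eta=\beta^m_{j_r}\beta^m_{j_r+1}\cdots\beta^m_{j_{r+1}}\beta^m_{j_{r+1}+1}$ for a suitable $r_1(m)\le r<r_2(m)$, reading the periodic word and using the identity above to cross the seam; the four positions $j_r,j_r+1,j_{r+1},j_{r+1}+1$ are $5N_mk$-good and $j_{r+1}-(j_r+1)\ge 2J-1\ge J$, so Proposition~\ref{control} gives $f(\varphi^i(p))<t_m$, and the identical argument with $\beta_n$ (valid because $O(n)=O(m)$) gives $f(\varphi^i(p))<t_n$. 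As the orbit is finite, $m_{\varphi,f}(p)=\max_i f(\varphi^i(p))<\min\{t_m,t_n\}$; fix $\rho>0$ with $m_{\varphi,f}(p)<\min\{t_m,t_n\}-2\rho$.

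\emph{Step 2 (the orbit $x$).} Because $\beta_m\in\mathcal{B}_{5N_mk}(\Lambda^m)$, there is $\underline{w}_m\in\Pi(\Lambda^m)$ in which $\beta_m=\beta^m_1\cdots\beta^m_{5N_mk}$ occupies coordinates $0,\dots,|\beta_m|-1$; let $c$ be the coordinate where the $\mathcal{B}_0$-letter $\beta^m_{j_{r_1(m)}}$ ends, so that the first $\mathcal{B}_0$-letter of $O(m)$ starts at $c+1$. Likewise choose $\underline{w}_n\in\Pi(\Lambda^n)$, shifted so that the $\mathcal{B}_0$-letter $\beta^n_{j_{r_2(n)}+1}$ (the letter of $\beta_n$ immediately following $O(n)$) starts at coordinate $1$. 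For $L\in\mathbb{N}$ large, define $x\in\Lambda$ by
$$\Pi(x)_i=\begin{cases}(\underline{w}_m)_i & \text{if } i\le c,\\ (O(m)^{L})_{\,i-c} & \text{if } c< i\le c+L|O(m)|,\\ (\underline{w}_n)_{\,i-c-L|O(m)|} & \text{if } i> c+L|O(m)|,\end{cases}$$
where $|O(m)|$ is the length of $O(m)$ as a word in $\mathcal{A}$. Every seam is admissible: at $c$ it is the transition from $\beta^m_{j_{r_1(m)}}$ to $\beta^m_{j_{r_1(m)}+1}$ occurring in $\beta_m$; the internal $O(m)O(m)$ seams are those of Step~1; and the last seam is the transition from $\beta^n_{j_{r_2(n)}}$ to $\beta^n_{j_{r_2(n)}+1}$ occurring in $\beta_n$ (using $O(m)=O(n)$). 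Since the negative tail of $\Pi(x)$ coincides with that of $\underline{w}_m\in\Pi(\Lambda^m)$ and the far--positive tail with that of $\underline{w}_n\in\Pi(\Lambda^n)$, continuity of $\Pi^{-1}$ together with closedness of $\Lambda^m,\Lambda^n$ yield $\alpha(x)\subset\Lambda^m$ and $\omega(x)\subset\Lambda^n$; hence $x\in W^u(\Lambda^m)\cap W^s(\Lambda^n)$.

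\emph{Step 3 (shadowing and the Markov value).} For $i$ lying in the block $O(m)^{L}$ at distance $\ge\overline{N}$ from both of its ends --- where $\overline{N}$ depends only on $\epsilon$ and the diameter of the Markov partition --- the itinerary $\sigma^i\Pi(x)$ agrees with a shift of $\overline{O(m)}=\Pi(p)$ on the central window of radius $\overline{N}$, so $d(\varphi^i(x),\mathcal{O}(p))<\epsilon$; choosing $L>N/|O(m)|+2\overline{N}$ gives a window $[\overline{m},\overline{m}+N]$ of such $i$, which is the first assertion. For the Markov value, split $\mathbb{Z}$ into: (i) $i\le c$, where for $i$ far to the left $\varphi^i(x)$ shares a long central itinerary with a point of $\Lambda^m\subset\Lambda_{t_m-\eta_m}$, so $f(\varphi^i(x))<t_m-\eta_m/2$, while the remaining finitely many $i$ (a number bounded independently of $L$) are handled exactly as in Step~1 via Proposition~\ref{control} applied to good quadruples of $\beta_m$ straddling the coordinate $c$; (ii) the block $O(m)^{L}$ away from its ends, where comparing $\varphi^i(x)$ with the appropriate $\varphi^{i'}(p)$ on a long central window gives $f(\varphi^i(x))<m_{\varphi,f}(p)+\rho<\max\{t_m,t_n\}$; (iii) $i>c+L|O(m)|$, symmetric to (i) with $\Lambda^n\subset\Lambda_{t_n-\eta_n}$ and $\beta_n$; the finitely many coordinates near the two ends of $O(m)^L$ are absorbed into (i) and (iii). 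Since $\limsup_{i\to\pm\infty}f(\varphi^i(x))\le\max\{t_m-\eta_m,\,t_n-\eta_n\}$ and the remaining $i$ form a finite set on which $f(\varphi^i(x))<\max\{t_m,t_n\}$, we conclude $m_{\varphi,f}(x)<\max\{t_m,t_n\}$.

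The main obstacle is the bookkeeping in Steps~2--3: one must keep three nested alphabets apart --- letters of $\mathcal{A}$, words of $\mathcal{B}_0=\mathcal{C}_u(\Lambda_{t_0},r_0)$, and the words defining $\Lambda^m$ and $\Lambda^n$ --- and check both that cutting $\underline{w}_m$ at the $\mathcal{B}_0$-boundary ``position $j_{r_1(m)}$'' still leaves a legitimate one--sided $\Lambda^m$-itinerary (so $\alpha(x)\subset\Lambda^m$), and that every coordinate of $\Pi(x)$ within a bounded (independent of $L$) distance of the two splices sits inside a rectangle $R(\eta;\cdot)$ with $\eta$ a good quadruple of $\beta_m$ or $\beta_n$, so that Proposition~\ref{control} genuinely bounds $f$ there. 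The $f$-estimates themselves are a rerun of the proof of Proposition~\ref{control} together with the elementary fact that points with a long common central itinerary have nearby $f$-values.
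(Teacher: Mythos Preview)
Your construction of $x$ is exactly the paper's: glue the left tail of a $\Lambda^m$-itinerary through $\beta_m$ at position $j_{r_1(m)}$, then $L$ copies of $O(m)=O(n)$, then the right tail of a $\Lambda^n$-itinerary through $\beta_n$ from position $j_{r_2(n)}+1$. The shadowing claim is fine, and your Step~1 on the periodic point $p$ is correct (though the paper does not isolate it --- as your own Step~1 argument shows, Proposition~\ref{control} already controls $f$ on every copy of $O(m)$ directly, so the comparison with $p$ is not needed for the Markov-value bound).

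The gap is in Step~3(i) and (iii). You assert that the finitely many $i\le c$ not covered by comparison with $\Lambda^m$ are ``handled exactly as in Step~1 via Proposition~\ref{control} applied to good quadruples of $\beta_m$ straddling the coordinate $c$.'' But the only good pairs you have in hand are the $j_r,\,j_r{+}1$ for $1\le r\le 3N_0^2$, and \emph{all of these lie in the single sub-package $I^{**}$ of $\mathcal{B}_0$-size $k$}. Comparison with $\Lambda^m$ requires $\sigma^i(\Pi(x))$ and $\sigma^i(\underline{w}_m)$ to agree on a central $\mathcal{A}$-block of size $2N_m{+}1$; since $\Pi(x)$ first diverges from $\underline{w}_m$ only a bounded $\mathcal{A}$-distance past $c$ (bounded in terms of $r_0$ and $k$, not $N_m$), the uncovered range of $i\le c$ has $\mathcal{A}$-length of order $N_m$. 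In $\mathcal{B}_0$-positions this range can therefore reach below the smallest $j_r$ in $I^{**}$ once $N_m$ is large, and then no good quadruple built from the $j_r$'s covers it. Your last paragraph acknowledges this must be checked, but it is not.

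The paper closes this gap by invoking Lemma~\ref{good} once more: since fewer than $N_mk/10$ positions of $\beta_m$ are bad, the \emph{first} package $\{1,\dots,N_mk\}$ also contains a good pair $j_{r_0(m)},\,j_{r_0(m)}{+}1$, and likewise the \emph{last} package of $\beta_n$ contains a good pair $j_{r_3(n)},\,j_{r_3(n)}{+}1$. Proposition~\ref{control} with the quadruple $(j_{r_0(m)},j_{r_0(m)}{+}1,j_{r_1(m)},j_{r_1(m)}{+}1)$ then covers every $\mathcal{B}_0$-position from $j_{r_0(m)}{+}1$ to $j_{r_1(m)}$, and since $j_{r_1(m)}-j_{r_0(m)}>N_mk>N_m$ the comparison with $\Lambda^m$ genuinely covers all coordinates up through $\beta^m_{j_{r_0(m)}}$. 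Inserting these two extra good pairs is the missing ingredient; with it your proof and the paper's coincide.
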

\begin{remark}
 By symmetry, we also have the existence of some $y\in W^u(\Lambda^n)\cap W^s(\Lambda^m)$ and $\overline{n}\in \mathbb{N}$ with similar properties as $x$ and $\overline{m}$. 
\end{remark}
\begin{proof}
As $\beta_m \in \mathcal{B}_{5N_m k}(\Lambda^{m})$ and $\beta_n \in \mathcal{B}_{5N_n k}(\Lambda^{n})$ we can find $\theta_m^{1}, \theta_n^{1}\in\mathcal{A}^{\mathbb{Z}^-}$ and $\theta_m^{2}, \theta_n^{2}\in\mathcal{A}^{\mathbb{N}}$ such that 
$$\theta_m^{1};\beta_m\theta_m^{2}\in \Pi(\Lambda^{m})\quad \mbox{and} \quad \theta_n^{1};\beta_n\theta_n^{2}\in \Pi(\Lambda^{n}).$$
By Lemma \ref{good}, arguing as before; we can find positions $1\leq j_{r_0(m)}< N_m k$ and $1\leq j_{r_0(n)}< N_n k$ such that $j_{r_0(m)}$, $j_{r_0(m)}+1$ are $5N_m k$-good positions for $\beta_m$ and $j_{r_0(n)}$, $j_{r_0(n)}+1$ are $5N_n k$-good positions for $\beta_n$; and also positions $4N_m k+1\leq j_{r_3(m)}< 5N_m k$ and $4N_n k+1\leq j_{r_3(n)}< 5N_n k$ such that $j_{r_3(m)}$, $j_{r_3(m)}+1$ are $5N_m k$-good positions for $\beta_m$ and $j_{r_3(n)}$, $j_{r_3(n)}+1$ are $5N_n k$-good positions for $\beta_n$.

Define then for $R\in \mathbb{N}$
$$x_R=\theta_m^{1};\beta^m_1\beta^m_2\dots\beta^m_{j_{r_1(m)}}O(n)^R \beta^n_{j_{r_2(n)}+1}\beta^n_{j_{r_2(n)}+2}\dots\beta^n_{5N_n k}\theta_n^{2}.$$
Clearly, the proposition will be proved if we show that for some $t<\max\{t_n,t_m\}$, $x_R\in \Sigma_{t}$:

Let $l\in \mathbb{Z}$. In any of the next three cases:

\begin{itemize}
    \item If $\Pi^{-1}(\sigma^l(x_R))\in R(\eta;P(\eta,s;\bar{n}))$ for $\eta= \beta^n_{j_{r_1(n)}}\beta^n_{j_{r_1(n)}+1}\dots \beta^n_{j_{r_2(n)}} \beta^n_{j_{r_2(n)}+1} (=\\ \beta^m_{j_{r_1(m)}}\beta^m_{j_{r_1(m)}+1}\dots \beta^m_{j_{r_2(m)}} \beta^n_{j_{r_2(m)+1}})$, some $j_{r_1(n)}< s \leq j_{r_2(n)}$ and  $\bar{n} \in P(\beta^n_s)$.
   
   \item If $\Pi^{-1}(\sigma^l(x_R))\in R(\eta;P(\eta,s;\bar{n}))$ for $\eta=\beta^m_{j_{r_0(m)}}\beta^m_{j_{r_0(m)}+1}\dots \beta^m_{j_{r_1(m)}} \beta^m_{j_{r_1(m)+1}}$, some $j_{r_0(m)}< s \leq j_{r_1(m)}$ and  $\bar{n} \in P(\beta^m_s)$.
    
  \item If $\Pi^{-1}(\sigma^l(x_R))\in R(\eta;P(\eta,s;\bar{n}))$ for $\eta=\beta^2_{j_{r_2(n)}}\beta^2_{j_{r_2(2)}+1}\dots \beta^2_{j_{r_3(n)}} \beta^n_{j_{r_3(n)+1}}$, some $j_{r_2(n)}< s \leq j_{r_3(n)}$ and  $\bar{n} \in P(\beta^n_s)$

\end{itemize}
Proposition \ref{control} let us conclude that $f(\Pi^{-1}(\sigma^l(x_R)))< \max\{t_n,t_m\}$.
 
Let $r_1=\abs{\beta^m_1\beta^m_2\dots \beta^n_{j_{r_0(m)}}}$ then, for $l\leq r_1-1$
$$f(\Pi^{-1}(\sigma^l(x_R)))<f(\Pi^{-1}(\sigma^l(\theta_m^{1};\beta_m\theta_m^{2})))+\eta_m/2<t_m-\eta_m/2$$
because $\Lambda^{m}\subset \Lambda_{t_m-\eta_m}$ and as $j_{r_1(m)}-j_{r_0(m)}>2N_mk-N_mk=N_mk$ we have that $\sigma^l(x_R)$ coincides with $\sigma^l(\theta_m^{1};\beta_m\theta_m^{2})$ in the central block of size $2N_m+1$ centered at the zero position.  

Analogously, for $r_2=\abs{\beta^m_1\beta^m_2\dots\beta^m_{j_{r_1(m)}}O(n)^R \beta^n_{j_{r_2(n)}+1}\beta^n_{j_{r_2(n)}+2}\dots\beta^n_{j_{r_3(n)}}}$ , $j=r_2-\abs{\beta^n_1\beta^n_2\dots\beta^n_{j_{r_3(n)}}}$ and $l\geq r_2$
$$f(\Pi^{-1}(\sigma^l(x_R)))<f(\Pi^{-1}(\sigma^{l-j}(\theta_n^{1};\beta_n\theta_n^{2})))+\eta_n/2<t_n-\eta_n/2$$
because $\Lambda^{n}\subset \Lambda_{t_n-\eta_n}$ and as $j_{r_3(n)}-j_{r_2(n)}>4N_nk-3N_nk=N_nk$ we have that $\sigma^l(x_R)$ coincides with $\sigma^{l-j}(\theta_n^{1};\beta_n\theta_n^{2})$ in the central block of size $2N_n+1$ centered at the zero position.   

As the previous cases describe all the possibilities for $l\in \mathbb{Z}$ and for $l\leq r_1-1$ and $l\geq r_2$
we have uniform limitation for the values of $f(\Pi^{-1}(\sigma^l(x_R)))<\max\{t_n,t_m\}$ then we have proved the result.
\end{proof}

 Using Proposition \ref{m,n} we can prove that if for some $m,n\in \mathbb{N}$, $O(m)=O(n)$ then we can connect $\Lambda^m$ with $\Lambda^n$ without leaving $\Lambda_{max\{t_n,t_m\}}$ as is expressed in Definition \ref{conection of horseshoes1}
 
 \begin{corollary}\label{contradiction}
 Let $m,n\in \mathcal{I}$ such that $O(m)=O(n)$. Then $\Lambda^m$ connects with $\Lambda^n$ before $\max\{t_n,t_m\}$.
 \end{corollary}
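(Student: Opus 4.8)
The plan is to deduce this corollary directly from Proposition \ref{m,n} (together with the remark following it) and the connection criterion of Proposition \ref{connection11}. Since $O(m)=O(n)$, Proposition \ref{m,n} provides a point $x\in W^u(\Lambda^m)\cap W^s(\Lambda^n)$ — which lies in $\Lambda$, being constructed as $\Pi^{-1}$ of a sequence in $\Sigma_{\mathcal{B}}$ — with $m_{\varphi,f}(x)<\max\{t_n,t_m\}$; one may take any values of the auxiliary parameters $N$, $\epsilon$ in that proposition, since here only the two conclusions $x\in W^u(\Lambda^m)\cap W^s(\Lambda^n)$ and $m_{\varphi,f}(x)<\max\{t_n,t_m\}$ are needed (the closeness to the periodic orbit is irrelevant for the present statement). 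By the remark following Proposition \ref{m,n}, the symmetric construction furnishes a point $y\in W^u(\Lambda^n)\cap W^s(\Lambda^m)\cap\Lambda$ with $m_{\varphi,f}(y)<\max\{t_n,t_m\}$.

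Next I would collect the relevant level sets. From the construction of $\Lambda^m=\Lambda^u(t_m)$ and $\Lambda^n=\Lambda^u(t_n)$ via Proposition \ref{conection} one has $\Lambda^m\subset\Lambda_{t_m-\eta_m}$ and $\Lambda^n\subset\Lambda_{t_n-\eta_n}$, while by the definition of the Markov value $\mathcal{O}(x)\subset\Lambda_{m_{\varphi,f}(x)}$ and $\mathcal{O}(y)\subset\Lambda_{m_{\varphi,f}(y)}$. Setting
$$t:=\max\{\,t_m-\eta_m,\ t_n-\eta_n,\ m_{\varphi,f}(x),\ m_{\varphi,f}(y)\,\},$$
and using that $s\mapsto\Lambda_s$ is nondecreasing, we obtain $\Lambda^m\cup\Lambda^n\cup\mathcal{O}(x)\cup\mathcal{O}(y)\subset\Lambda_t$; moreover each of the four numbers in the maximum is strictly smaller than $\max\{t_n,t_m\}$ (because $\eta_m,\eta_n>0$ and by the strict inequalities in Proposition \ref{m,n}), so $t<\max\{t_n,t_m\}$.

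Finally I would invoke Proposition \ref{connection11} with $\Lambda(1)=\Lambda^m$, $\Lambda(2)=\Lambda^n$, and the points $x$, $y$ just produced: it yields that for every $\epsilon>0$ the subhorseshoes $\Lambda^m$ and $\Lambda^n$ connect before $t+\epsilon$. Choosing $\epsilon=(\max\{t_n,t_m\}-t)/2>0$ gives connection before $t+\epsilon<\max\{t_n,t_m\}$, and since ``connecting before $s$'' is monotone in $s$ (the same witnessing subhorseshoe works for any larger threshold, by Definition \ref{conection of horseshoes1}), we conclude that $\Lambda^m$ connects with $\Lambda^n$ before $\max\{t_n,t_m\}$, as claimed. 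There is no serious obstacle here; the only point requiring care is the one made in the previous paragraph, namely that all of $\Lambda^m$, $\Lambda^n$, $\mathcal{O}(x)$, $\mathcal{O}(y)$ sit at a level $t$ \emph{strictly} below $\max\{t_n,t_m\}$, so that the unavoidable $\epsilon$-loss in Proposition \ref{connection11} can be absorbed — this is exactly what the positive gaps $\eta_m,\eta_n$ and the strict bound $m_{\varphi,f}(x),m_{\varphi,f}(y)<\max\{t_n,t_m\}$ are there to guarantee.
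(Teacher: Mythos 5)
Your proof is correct and is essentially the paper's own argument: invoke Proposition \ref{m,n} and its symmetric remark to produce $x$ and $y$ with Markov values strictly below $\max\{t_n,t_m\}$, observe that together with $\Lambda^m\subset\Lambda_{t_m-\eta_m}$ and $\Lambda^n\subset\Lambda_{t_n-\eta_n}$ everything sits inside $\Lambda_t$ for some $t<\max\{t_n,t_m\}$, and then apply Proposition \ref{connection11}. You merely make explicit the small bookkeeping step (choosing $\epsilon$ so that $t+\epsilon<\max\{t_n,t_m\}$ and using monotonicity of ``connects before'') that the paper leaves implicit.
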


 \begin{proof}
 Proposition \ref{m,n} let us find some $x,y \in \Lambda$ with $x\in W^u(\Lambda^m)\cap W^s(\Lambda^n)$, $y\in W^u(\Lambda^n)\cap W^s(\Lambda^m)$ and some $t<\max\{t_n,t_m\}$ \ such that 
$$\Lambda^n \cup \Lambda^m \cup \mathcal{O}(x) \cup \mathcal{O}(y) \subset \Lambda_t.$$ 
Then Proposition \ref{connection11} let us conclude that $\Lambda^n$ and $\Lambda^m$ connects before $\max\{t_n,t_m\}$. 
 \end{proof}

 \subsection{End of the proof of Theorem \ref{principal0} when the dimension is less than $1$}
We are ready to obtain the desired contradiction. As the map $O$ takes only a finite number of different values, said $M$.  Then by Corollary \ref{contradiction} it would be impossible to have a sequence $n_1<n_2<...<n_{M+1}$ of elements of $\mathcal{I}$ such that for $i,j\in  \{1,...,M+1 \}$ with $i\neq j$,\ $\Lambda^{n_i}$ and $\Lambda^{{n_j}}$ doesn't connect before $\max \{ t_{n_i}, t_{n_j}\}$ in contradiction with Proposition \ref{cadeias}.

\subsection{Proof of Theorem \ref{principal0} when the dimension is greater than or equal to $1$}

Consider $\varphi\in \mathcal{U}^*$ such that $HD(\Lambda)\geq 1$, $f\in \mathcal{P}_{\varphi,f}$ (see Proposition \ref{lagrange1}) and some closed subinterval $I\subset I_{\varphi,f}$ that doesn't contain neither $c_{\varphi,f}$ nor $\tilde{c}_{\varphi,f}$. Observe  that, in this case, by Corollary \ref{max}, $\max L_{\varphi, f}=1$ and then for $t<\tilde{c}_{\varphi,f}$ one has $L_{\varphi, f}(t)<1$. 

Take a hyperbolic set of finite type $P$ such that $$\Lambda_{\max I} \subset P\subset \Lambda_{\frac{\tilde{c}_{\varphi,f}+\max I}{2}}.$$
As before, the set $P$ admits a decomposition $P=\bigcup \limits_{i\in \mathcal{I}} \tilde{\Lambda}_i$ where $\mathcal{I}$ is a finite index set and for any $i\in \mathcal{I}$,\ $\tilde{\Lambda}_i$ is a subhorseshoe or a transient set. Note that if $i_0,i_1\in\mathcal{I}$ are different and $\tilde{\Lambda}_{i_0}$ and $\tilde{\Lambda}_{i_1}$ are subhorseshoes, then $\tilde{\Lambda}_{i_0}$ and $\tilde{\Lambda}_{i_1}$ don't connect before $\max I$.

Consider $s<\max I$, then we have 
$$\ell_{\varphi,f}(\Lambda_s)=\bigcup \limits_{i\in \mathcal{I}} \ell_{\varphi,f}(\tilde{\Lambda}_i\cap\Lambda_s)=\bigcup \limits_{\substack{i\in \mathcal{I}: \ \tilde{\Lambda}_i \ is\\ subhorseshoe }} \ell_{\varphi,f}(\tilde{\Lambda}_i\cap\Lambda_s)=\bigcup \limits_{\substack{i\in \mathcal{I}: \ \tilde{\Lambda}_i \ is\\ subhorseshoe }} \ell_{\varphi,f}((\tilde{\Lambda}_i)_s). $$
by taking union over $s<t$ where $t\leq \max I$, we conclude from this and Lemma \ref{L1} that
$$\mathcal{L}_{\varphi,f}\cap (-\infty,t)=\ell_{\varphi,f}(\Lambda)\cap (-\infty,t)=\bigcup \limits_{\substack{i\in \mathcal{I}: \ \tilde{\Lambda}_i \ is\\ subhorseshoe }} \ell_{\varphi,f}(\tilde{\Lambda}_i)\cap (-\infty,t)$$
and then, for $t\leq \max I$
$$L_{\varphi, f}(t)=\max\limits_{\substack{i\in \mathcal{I}: \ \tilde{\Lambda}_i \ is\\ horseshoe }} HD(\ell_{\varphi,f}(\tilde{\Lambda}_i)\cap (-\infty,t))=\max\limits_{\substack{i\in \mathcal{I}: \ \tilde{\Lambda}_i \ is\\ horseshoe }} L_i(t),$$
where $L_i(t)=HD(\ell_{\varphi,f}(\tilde{\Lambda}_i)\cap (-\infty,t))$ is associated to the horseshoe $\tilde{\Lambda}_i$ with
$$HD(\tilde{\Lambda}_i)=HD(\ell_{\varphi,f}(\tilde{\Lambda}_i))\leq HD(\ell_{\varphi,f}(\Lambda_{\frac{\tilde{c}_{\varphi,f}+\max I}{2}}))\leq L_{\varphi, f}\left(\frac{2\tilde{c}_{\varphi,f}+\max I}{3}\right)<1.$$
Observe that, as in Proposition \ref{R-generic}, the first part of the theorem also holds for subhorseshoes of $\Lambda$ with Hausdorff dimension less than $1$. Therefore, if we set $c_i=\min \{x: x\ \textrm{is an accumulation point of}\ \ell_{\varphi,f}(\tilde{\Lambda}_i)\}$, by Proposition \ref{first} there is some $i_0\in \mathcal{I}$ such that $c_{\varphi,f}=c_{i_0}$ and also by Corollary \ref{c} for any $i$ such that $c_{\varphi,f}< c_i$ the function $L_i$ doesn't contribute with any discontinuity close to $c_i$ to the discontinuity set of $L_{\varphi,f}$ (note that it is possible to have $c_i\geq \max I$ for some $i$). Then, we conclude that $L_{\varphi, f}$ has finitely many discontinuities in the interval $I$ as we wanted to see.

\end{document}